\documentclass[11pt,a4paper]{article}
\usepackage[T1]{fontenc}
\usepackage[utf8]{inputenc}
\usepackage{lmodern}
\usepackage[margin=25mm]{geometry}
\usepackage{amsmath,amssymb,amsthm,mathrsfs}
\usepackage{enumitem}
\usepackage[numbers,sort&compress]{natbib}
\usepackage{microtype}
\usepackage[hidelinks]{hyperref}
\setlist[enumerate]{leftmargin=.38in}
\allowdisplaybreaks[1]
\hypersetup{
  pdftitle={Classical Solutions for a Finite-Horizon Exit-Time Problem with Degenerate Diffusion Control},
  pdfauthor={Lihua Bai and Linyu Miao}
}

\theoremstyle{plain}
\newtheorem{Theorem}{Theorem}[section]
\newtheorem{Proposition}[Theorem]{Proposition}
\newtheorem{Lemma}[Theorem]{Lemma}
\newtheorem{Corollary}[Theorem]{Corollary}
\theoremstyle{definition}
\newtheorem{Assumption}{Assumption}

\newtheorem{Example}[Theorem]{Example}
\newtheorem{Remark}[Theorem]{Remark}

\title{Classical Solutions for a Finite-Horizon Exit-Time Problem with Degenerate Diffusion Control}
\author{Lihua Bai \qquad Linyu Miao\\[0.6em]
\small School of Mathematical Sciences, Nankai University\\
\small Tianjin 300071, China\\[0.3em]
\small \href{mailto:lhbai@nankai.edu.cn}{\texttt{lhbai@nankai.edu.cn}}
\quad \href{mailto:mly202210@126.com}{\texttt{mly202210@126.com}}}
\date{}

\begin{document}
\maketitle

\begin{abstract}
We study finite-horizon exit-time control of a one-dimensional affine
diffusion, with an unbounded control acting on both drift and volatility.
Since an admissible control can cancel the instantaneous volatility, the
associated HJB equation is not uniformly parabolic. Moreover, the location
of the degeneracy is not prescribed by the model coefficients but depends
on the unknown value function and optimal feedback. Rather than relying
on a viscosity-solution formulation, we work under suitable structural
assumptions and prove that the value function is a $C^{1,2}$ classical
solution throughout the interior of the state domain and is strictly
convex in the state variable. Moreover, we construct a locally Lipschitz
optimal feedback, explicitly identify the degenerate set as a curve, and
show that the closed-loop drift on this curve equals the time derivative
of its state coordinate. Finally, we prove invariance and inaccessibility
before exit and derive exact squared-logarithmic asymptotics for approach
probabilities.
\end{abstract}

{\small
\noindent\textbf{Keywords.} Control-dependent diffusion; Degenerate HJB equation; classical solution; finite-horizon exit-time control.\par
\noindent\textbf{Mathematics Subject Classification (2020).} 60H30, 93E20, 35K65.
}

\section{Introduction}
We study a finite-horizon exit-time control problem for the one-dimensional
affine diffusion
\[
dY_s=(a_sY_s+b_su_s+c_s)\,ds
+(\sigma_su_s+\gamma_s)\,dW_s,
\qquad s\in[t,T],\qquad Y_t=y,
\]
evolving in a time-dependent interval.  Define the value function 
\[
V(t,y):=\inf_u\mathbb E\!\left[
h(Y_T-d)\mathbf 1_{\{\tau=T\}}
+g(\tau,Y_\tau)\mathbf 1_{\{\tau<T\}}\right].
\]
Here $u$ ranges over admissible controls taking values in $\mathbb R$,
$\tau$ is the first exit time from the interval capped by $T$, $g$ is the prescribed lateral exit cost, and $h$ is a convex terminal loss
centered at the target level $d$.

Finite-horizon exit-time control is a well-established class of stochastic
control problems whose HJB equations carry both terminal and lateral boundary
data. The regularity of the associated HJB
problem depends strongly on parabolicity. When the covariance matrix admits a uniform positive lower bound independent
of the admissible control--that is, when the uniform parabolicity condition
holds--classical Cauchy--Dirichlet theory can identify the value function with
a classical solution of the HJB equation and justify an optimal Markov
feedback, under the usual regularity and compatibility assumptions
\cite{FlemingSoner2006}.  This applies both when the diffusion coefficient is
fixed and when it depends on the control but retains a uniform lower bound.
More specific finite-horizon exit results include a strong HJB
solution with feedback controls on half-spaces
\cite{Calvia2022HJBEA} and a classical solution for a half-line
exit problem \cite{Zawisza2025StochasticEC}.  Related actuarial problems
stopped at ruin have yielded classical value functions in specific
one-dimensional finite-horizon dividend, reinsurance, and liquidity models
by exploiting their variational-inequality or free-boundary structure
\cite{DeAngelisEkstrom2017,GuanXuZhou2023,ChenGuanYi2021}.

When the uniform parabolicity required by classical theory is
unavailable, viscosity solutions provide the robust framework for
finite-horizon exit-time control problems. Dynamic programming and continuity
results for possibly degenerate controlled diffusions in time--space domains
are developed in
\cite{DongKrylov2007,Rokhlin2013DPP,BayraktarSongYang2011}, while stochastic
representations and viscosity uniqueness extend to degenerate nonlocal
terminal--boundary Bellman equations in \cite{GongMouSwiech2019}.
Complementary comparison, boundary-regularity, and approximation results for
parabolic Cauchy--Dirichlet HJB equations are given in
\cite{Chaumont2004,PicarelliReisingerRotaetxe2020}. At the application level,
viscosity characterizations have been obtained for
insurance surplus control stopped at ruin, finite-time liquidation with
control-dependent drift and diffusion, and stochastic reach--avoid problems
with discontinuous exit rewards
\cite{Zhu2011RiskControl,BianWuZheng2016,
	MohajerinEsfahaniChatterjeeLygeros2016}.

The present model is plainly not uniformly parabolic. In contrast to the
viscosity-solution literature reviewed above, we seek a finer characterization
of the value function and the optimal control.  The main difficulty is already
visible in the associated HJB equation
\[
V_t+(a_ty+c_t)V_y
+\inf_{u\in\mathbb R}
\left\{
b_tuV_y+\frac12(\sigma_tu+\gamma_t)^2V_{yy}
\right\}=0.
\]
If $V_{yy}<0$, the infimum is $-\infty$; if $V_{yy}=0$, it is finite only
when $V_y=0$.  Thus finiteness of the Hamiltonian requires
$V_{yy}\ge0$ and, wherever $V_{yy}=0$, also $V_y=0$.  At points where
$V_{yy}>0$, formal minimization gives
\[
u^*(t,y)
=-\frac{\gamma_t}{\sigma_t}
-\frac{b_t}{\sigma_t^2}\frac{V_y(t,y)}{V_{yy}(t,y)},
\qquad
\sigma_tu^*(t,y)+\gamma_t
=-\frac{b_t}{\sigma_t}\frac{V_y(t,y)}{V_{yy}(t,y)},
\]
and hence the reduced HJB equation
\[
V_t+
\left(a_ty+c_t-\frac{b_t\gamma_t}{\sigma_t}\right)V_y
-\frac12\left(\frac{b_t}{\sigma_t}\right)^2
\frac{V_y^2}{V_{yy}}=0.
\]
The location of any zero-volatility set is therefore determined by the unknown value
function, but the reduced equation becomes singular when $V_y$ and $V_{yy}$
vanish together.  At such a point $V_y/V_{yy}$ has the indeterminate form
$0/0$, while the original Hamiltonian does not distinguish among admissible
control values.  Consequently, neither the limiting feedback nor the
closed-loop drift and volatility on the curve can be obtained pointwise from
the HJB equation.

To focus on the essential difficulty, we first apply a deterministic affine
change of state and a shift of control, reducing the model to the homogeneous
dynamics $dX_s=b_su_s\,ds+\sigma_su_s\,dW_s$.  A comparison estimate singles
out $x=0$ and motivates two one-sided problems.  On each side, the formal
Legendre dual is constructed as an optimal-stopping value for geometric
Brownian motion.  A half-line heat-kernel estimate shows that the correction
caused by the moving state boundary is negligible to every algebraic order
near $p=0$, including
the first three derivatives needed for inversion.  Strict dual convexity then
permits inverse-Legendre reconstruction.

Our first main result shows that the interior zero-volatility set is precisely
an explicitly identified curve and that the closed-loop drift on this curve
equals the time derivative of its state coordinate.  Our second result
constructs a classical solution of the HJB equation that belongs to
$C^{1,2}$ throughout the interior of the state domain.  The formal minimizing
feedback extends across the curve to a continuous, locally Lipschitz
feedback, and the resulting closed-loop control is admissible and optimal.
Consequently, the constructed solution coincides with the value function.
The value function is strictly convex in the state variable, even though its
second derivative may vanish on the curve when the terminal minimum is flat.
Our third result shows that a closed-loop trajectory starting on the curve
remains on it, whereas one starting away from the curve cannot reach it before
exiting the state interval.  We also derive the exact squared-logarithmic
asymptotic rate of the probability that, at a fixed later time and before
exit, the optimally controlled state lies within a small distance of the
curve.

Section~\ref{sec:model} formulates the model and assumptions.
Section~\ref{sec:candidate-interface} identifies the candidate zero-volatility
set and the two one-sided problems. Section~\ref{sec:auxiliary} constructs the
dual stopping problem and analyzes the solution near $p=0$.
Section~\ref{sec:global} reconstructs the global classical solution;
Section~\ref{sec:affine} returns to the affine model, and
Section~\ref{sec:probability} determines whether the optimal state can reach
or remain on the zero-volatility curve and derives the corresponding approach
rate. Supplementary results and technical arguments are collected in the
appendices.

\section{Problem formulation}
\label{sec:model}

We use the standard notation for parabolic H\"older and Sobolev spaces. In
particular, $C^{1,2}$ denotes functions with one continuous time derivative
and two continuous spatial derivatives,
while $W_\rho^{2,1}(Q)$ consists of functions
$f,f_x,f_{xx},f_t\in L^\rho(Q)$, with derivatives understood
distributionally. Local spaces carry the subscript ``$\mathrm{loc}$'', and
equations for Sobolev functions are understood almost everywhere.

\subsection{General formulation}

Let $(\Omega,\mathcal F,\mathbb F,\mathbb P)$ be a filtered probability space
satisfying the usual conditions and supporting a one-dimensional standard
Brownian motion $W=(W_s)_{0\le s\le T}$, where $T>0$ is fixed. For an initial
pair $(t,y)\in[0,T]\times\mathbb R$, consider the controlled linear diffusion
\begin{equation}
	\label{eq:general-state}
	dY_s=(a_sY_s+b_su_s+c_s)\,ds+(\sigma_s u_s+\gamma_s)\,dW_s,
	\qquad s\in[t,T],\qquad Y_t=y,
\end{equation}
where $a,c,\gamma\in C([0,T])$ and
$b,\sigma\in C^\alpha([0,T])$ are deterministic.  The control set is the
whole real line $U=\mathbb R$.

For a fixed initial pair $(t,y)$, an $\mathbb R$-valued process
$u=(u_s)_{s\in[t,T]}$ is called admissible if it is
$\mathbb F$-progressively measurable, satisfies
$\mathbb E\int_t^T|u_s|^2\,ds<\infty$, and the state equation
\eqref{eq:general-state} admits a unique strong solution. The corresponding
admissible class is denoted by $\mathcal U^{\rm gen}(t,y)$.

Let $\ell,r\in C^{1+\alpha}([0,T])$ satisfy
$\ell(t)<r(t)$ for every $t\in[0,T]$.  Define the moving time--space domain by
\[
\mathcal D:=\{(t,\xi)\in[0,T]\times\mathbb R:
\ell(t)\le \xi\le r(t)\}.
\]
We write
\[
\mathcal D^\circ
:=\{(t,x)\in[0,T)\times\mathbb R:\ell(t)<x<r(t)\}
\]
for the interior of the moving domain and
\[
\partial_p\mathcal D
:=
\bigl\{(t,\ell(t)):0\le t\le T\bigr\}
\cup
\bigl\{(t,r(t)):0\le t\le T\bigr\}
\cup
\bigl\{(T,x):\ell(T)\le x\le r(T)\bigr\}.
\]
This is the parabolic boundary of $\mathcal D$.

The prescribed lateral costs
$g_\ell,g_r\in C^{1+\alpha}([0,T])$ are strictly positive, and the lateral
payoff $g$ is defined by
\begin{equation}
	\label{eq:lateral-payoff}
	g(t,x):=
	\begin{cases}
		g_\ell(t), & x=\ell(t),\\
		g_r(t),    & x=r(t).
	\end{cases}
\end{equation}

Fix a target level $d\in\mathbb R$ and set
\[
I_T^d:=
\bigl[\min\{\ell(T)-d,0\},\max\{r(T)-d,0\}\bigr].
\]
The terminal loss $h\in C^2(I_T^d)$ is locally $C^3$ on each side of zero,
nonnegative, and convex.  We assume that zero is its unique minimizer,
\[
h(0)=h'(0)=0,
\qquad
h''(x)>0\quad\text{for }x\ne0,
\]
without imposing third-order regularity at zero.  The terminal and lateral
data are compatible at the corners:
\[
g_\ell(T)=h(\ell(T)-d),
\qquad
g_r(T)=h(r(T)-d).
\]

For $(t,y)\in\mathcal D$ and
$u\in\mathcal U^{\rm gen}(t,y)$, denote by
$\mathbb P_{t,y}^u$ the law of the controlled state starting from
$Y_t=y$, and by $\mathbb E_{t,y}^u$ the corresponding expectation.
Define its first exit time from the moving open interval by
\[
\tau^{t,y;u}
:=\inf\{s\in[t,T]:Y_s\notin(\ell(s),r(s))\}\wedge T,
\]
with the convention $\inf\varnothing=+\infty$.  Since the state process and
the boundary curves are continuous, on $\{\tau^{t,y;u}<T\}$ one has
\[
Y_{\tau^{t,y;u}}
\in\{\ell(\tau^{t,y;u}),r(\tau^{t,y;u})\},
\]
and hence the boundary payoff in \eqref{eq:lateral-payoff} is well defined.

The general exit-time criterion is
\begin{equation}
	\label{eq:general-cost}
	J^{\rm gen}(t,y;u):=
	\mathbb E_{t,y}^u\!\left[
	h(Y_T-d)\mathbf 1_{\{\tau^{t,y;u}=T\}}
	+g\bigl(\tau^{t,y;u},Y_{\tau^{t,y;u}}\bigr)
	\mathbf 1_{\{\tau^{t,y;u}<T\}}
	\right],
\end{equation}
and the corresponding value function is
\[
V^{\rm gen}(t,y):=
\inf_{u\in\mathcal U^{\rm gen}(t,y)}
J^{\rm gen}(t,y;u),
\qquad (t,y)\in\mathcal D.
\]
When the control is clear from the context, we suppress the superscript
$u$.  In particular, after the optimal feedback has been constructed,
$\mathbb P_{t,y}$ and $\mathbb E_{t,y}$ denote the probability law and
expectation of the corresponding optimal closed-loop state.

If the state reaches a lateral boundary exactly at time $T$, the convention in
\eqref{eq:general-cost} assigns the terminal cost. This convention is
consistent with the terminal-corner compatibility above.  That compatibility
alone does not ensure that the interior value attains the prescribed lateral
costs continuously.  Boundary discontinuity can occur in controlled exit
problems even on bounded domains; sufficient conditions based on boundary
regularity and controlled barriers are given in
\cite{FlemingSoner2006,BayraktarSongYang2011,DongKrylov2007}.  We exclude the
corresponding generalized-Dirichlet regime. Under
Assumption~\ref{ass:dual-comparison} below, the prescribed lateral values are
attained continuously, as proved through the dual construction and
inverse-Legendre reconstruction.

\subsection{The homogeneous model}

We first study the homogeneous specialization of the preceding problem, with
the target translated to zero; thus the preceding data conditions are used
with $d=0$:
\begin{equation}
	\label{eq:state}
	dX_s=b_su_s\,ds+\sigma_su_s\,dW_s,
	\qquad s\in[t,T],\qquad X_t=x.
\end{equation}
For $(t,x)\in\mathcal D$, let $\mathcal U(t,x)$ denote the corresponding
specialization of the admissible class $\mathcal U^{\rm gen}(t,x)$.

For $(t,x)\in\mathcal D$ and $u\in\mathcal U(t,x)$, let
\begin{equation*}
	\tau^{t,x;u}:=\inf\{s\in[t,T]:X_s\notin(\ell(s),r(s))\}\wedge T.
\end{equation*}
When no ambiguity arises, we suppress the superscripts $(t,x;u)$ and simply
write $\tau$. The cost functional associated with \eqref{eq:state} is
\begin{equation*}
	J(t,x;u):=
	\mathbb E_{t,x}^u\!\left[
	h(X_T)\mathbf 1_{\{\tau=T\}}
	+g(\tau,X_\tau)\mathbf 1_{\{\tau<T\}}
	\right],
\end{equation*}
where the lateral payoff $g$ is defined by \eqref{eq:lateral-payoff}. The value
function is
\begin{equation}
	\label{eq:value}
	V(t,x):=\inf_{u\in\mathcal U(t,x)}J(t,x;u),
	\qquad (t,x)\in\mathcal D.
\end{equation}

\begin{Assumption}[Controlled coefficients]
	\label{ass:coefficients}
	\[
	\inf_{0\le t\le T}|b_t|>0,
	\qquad
	\inf_{0\le t\le T}|\sigma_t|>0.
	\]
\end{Assumption}

A formal application of the dynamic programming principle gives the HJB
boundary--terminal problem
\begin{equation}
	\label{eq:HJB-boundary-problem}
	\left\{
	\begin{aligned}
		&V_t(t,x)+\inf_{u\in\mathbb R}
		\left\{b_tuV_x(t,x)+\frac12\sigma_t^2u^2V_{xx}(t,x)\right\}=0,
		&&(t,x)\in\mathcal D^\circ,\\[2mm]
		&V(t,\ell(t))=g_{\ell}(t),\qquad
		V(t,r(t))=g_{r}(t),
		&&0\le t\le T,\\[1mm]
		&V(T,x)=h(x),
		&&\ell(T)\le x\le r(T).
	\end{aligned}
	\right.
\end{equation}
Whenever $V_{xx}(t,x)>0$, the Hamiltonian is strictly convex in $u$, and its
unique minimizer is
\begin{equation}
	\label{eq:feedback}
	u^*(t,x)=-\frac{b_tV_x(t,x)}
	{\sigma_t^2V_{xx}(t,x)}.
\end{equation}
Substitution of \eqref{eq:feedback} into the HJB equation in
\eqref{eq:HJB-boundary-problem} yields
\begin{equation}
	\label{eq:reduced-HJB}
	V_t(t,x)-\kappa_t\frac{V_x(t,x)^2}{V_{xx}(t,x)}=0,\qquad
	\kappa_t:=\frac12\left(\frac{b_t}{\sigma_t}\right)^2.
\end{equation}
The condition $V_{xx}>0$ is essential in the present unconstrained problem.
Indeed, if $V_{xx}<0$, or if $V_{xx}=0$ and $V_x\neq0$, the infimum in the
HJB equation in \eqref{eq:HJB-boundary-problem} equals $-\infty$. Spatial
convexity is therefore part of the structure of the HJB equation and will be
established later rather than imposed on the value function from the outset.

\section{The candidate zero-volatility set and one-sided problems}
\label{sec:candidate-interface}

Throughout Sections~\ref{sec:candidate-interface}--\ref{sec:global}, we focus
on the configuration in which the candidate zero-volatility point remains in the
interior of the state domain:
\[
\ell(t)<0<r(t),\qquad 0\le t\le T.
\]
The one-sided configurations in which the entire state interval remains on
one side of zero, namely
\[
 0<\ell(t)<r(t)\quad\text{for all }t\in[0,T],
 \qquad\text{or}\qquad
 \ell(t)<r(t)<0\quad\text{for all }t\in[0,T],
\]
are treated in Appendix~\ref{app:one-sided-domain}.

Using $\kappa$ from \eqref{eq:reduced-HJB}, define
\begin{equation}\label{eq:time-change}
	s(t):=\int_t^T\kappa_\theta\,d\theta,
	\qquad S:=s(0).
\end{equation}
Assumption~\ref{ass:coefficients} makes $s$ strictly decreasing from
$[0,T]$ onto $[0,S]$.  Its $C^{1+\alpha}$ inverse is denoted by $t=t(s)$.
We first use this normalization to establish an estimate in the original
time variable.

\begin{Lemma}[Quadratic estimate at the origin]
	\label{lem:quadratic-origin}
	Under Assumption~\ref{ass:coefficients}, there exists a constant $M\ge1$,
	depending only on the prescribed data, such that
	\begin{equation}
		\label{eq:quadratic-bound}
		0\le V(t,x)
		\le
		M\exp\left(-2\int_t^T\kappa_\theta\,d\theta\right)x^2,
		\qquad (t,x)\in\mathcal D.
	\end{equation}
	Consequently, for every $t\in[0,T]$,
	\begin{equation}
		\label{eq:value-gradient-zero}
		V(t,0)=0,
		\qquad
		V_x(t,0)=0,
	\end{equation}
	where the spatial derivative at $x=0$ exists in the classical sense.
\end{Lemma}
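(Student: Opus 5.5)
Nonnegativity is immediate, since $h\ge0$ and $g_\ell,g_r>0$. For the upper bound we exhibit one admissible control whose cost is at most the right-hand side of \eqref{eq:quadratic-bound}. The natural choice is a linear feedback $u_s=-\lambda_s X_s$, with $\lambda_s$ deterministic. The closed-loop state is then a geometric Brownian motion,
\[
dX_s=-\lambda_s X_s\bigl(b_s\,ds+\sigma_s\,dW_s\bigr),
\]
so $X$ is explicit, never changes sign, and stays at $0$ if $x=0$. This control is square integrable and gives a unique strong solution, so it is admissible. To leading order we should take $\lambda_s=b_s/\sigma_s^2$, which is the feedback \eqref{eq:feedback} for the quadratic candidate $V=m(t)x^2$. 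This choice yields $dE[X_s^2]=(-2\lambda b+\lambda^2\sigma^2)E[X_s^2]\,ds=-2\kappa_s\,E[X_s^2]\,ds$, and hence
\[
E[X_s^2]=x^2\exp\Bigl(-2\int_t^s\kappa_\theta\,d\theta\Bigr).
\]

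Next we bound the cost of this control. Convexity of $h$ with $h(0)=h'(0)=0$ gives $h(x)\le C_hx^2$ on $I_T^0$, where $C_h=\max h''/2$. The lateral payoff is bounded, $g\le G:=\max(\|g_\ell\|_\infty,\|g_r\|_\infty)$. Since $\ell<0<r$ on the compact interval $[0,T]$, there is $\delta>0$ with $-\ell(s),\,r(s)\ge\delta$ for all $s$. On $\{\tau<T\}$ we then have $X_\tau^2\ge\delta^2$, so
\[
g(\tau,X_\tau)\le \frac{G}{\delta^2}\,X_\tau^2 .
\]
It follows that
\[
J\le \max\Bigl(C_h,\frac{G}{\delta^2}\Bigr)\,E\bigl[X_{\tau}^2\bigr],
\]
where $\tau$ is the exit time capped by $T$. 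The remaining task is to control $E[X_\tau^2]$ by $x^2e^{-2\int_t^T\kappa}$. The process $Z_s=X_s^2e^{2\int_t^s\kappa}$ is a true martingale, because it is a stochastic exponential with bounded deterministic coefficients. Optional stopping therefore gives
\[
E\Bigl[X_\tau^2\,e^{2\int_t^\tau\kappa}\Bigr]=x^2 .
\]
Multiplying and dividing, and using $e^{2\int_\tau^T\kappa}\le e^{2S}$, we get
\[
E[X_\tau^2]\le e^{2S}\,x^2e^{-2\int_t^T\kappa}.
\]
Hence the constant $M=\max(1,C_h,G/\delta^2)\,e^{2S}$ works.

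Finally we deduce \eqref{eq:value-gradient-zero}. Evaluating \eqref{eq:quadratic-bound} at $x=0$ together with $V\ge0$ gives $V(t,0)=0$. The bound then yields
\[
\Bigl|\frac{V(t,x)-V(t,0)}{x}\Bigr|\le M|x|\to0,
\]
so $V_x(t,0)$ exists classically and equals $0$. Both one-sided difference quotients make sense because $0$ is interior to $(\ell(t),r(t))$.

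The only step needing care is the optional-stopping/martingale argument. It is routine here, since the coefficients $\lambda b$ and $\lambda\sigma$ are bounded by Assumption~\ref{ass:coefficients} and the continuity of $b$ and $\sigma$.
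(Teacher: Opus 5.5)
Your proposal is correct and follows essentially the paper's argument. You use the same linear feedback $u=-(b/\sigma^2)X$ and the same martingale $X_\theta^2e^{2\int_t^\theta\kappa}$, which the paper phrases as the quadratic function $Me^{-2s(t)}x^2$ annihilated along the closed loop and dominating the lateral and terminal data, followed by the same difference-quotient argument at $x=0$; the only difference is bookkeeping (you absorb the factor $e^{2S}$ uniformly into $M$ and do not stop the feedback at exit), which is harmless.
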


\begin{proof}
The ratio $h(x)/x^2$ extends continuously at zero and is bounded on the
compact terminal interval.  The functions $\ell$ and $r$ stay uniformly away
from zero;
hence $M$ can be chosen so that
\[
 \overline V(t,x):=Me^{-2s(t)}x^2
\]
dominates both the lateral and terminal data.  

Fix $(t,x)\in\mathcal D^\circ$ and use, up to the exit time
$\widehat\tau$, the feedback
\[
\widehat u_\theta
=-\frac{b_\theta}{\sigma_\theta^2}X_\theta,
\qquad t\le\theta<\widehat\tau,
\]
with $\widehat u_\theta=0$ after $\widehat\tau$. Before exit, the
closed-loop equation is
\[
dX_\theta
=-\frac{b_\theta^2}{\sigma_\theta^2}X_\theta\,d\theta
-\frac{b_\theta}{\sigma_\theta}X_\theta\,dW_\theta.
\]
It has a unique strong solution, and the stopped feedback is admissible
because the coefficients and the state before exit are bounded.

Using $2\kappa_\theta=b_\theta^2/\sigma_\theta^2$, a direct calculation
gives
\[
\overline V_t
+b_\theta\widehat u_\theta\overline V_x
+\frac12\sigma_\theta^2\widehat u_\theta^2\overline V_{xx}
=0
\]
along the closed-loop state. The stochastic integrand in It\^o's formula is
bounded up to $\widehat\tau$; hence
$\overline V(\theta\wedge\widehat\tau,
X_{\theta\wedge\widehat\tau})$ is a martingale. The boundary and
terminal domination therefore imply
\[
V(t,x)
\le J(t,x;\widehat u)
\le
\mathbb E_{t,x}^{\widehat u}\left[
\overline V(\widehat\tau,X_{\widehat\tau})\right]
=
\overline V(t,x).
\]
The same bound holds on the parabolic boundary, while nonnegativity of the
payoffs gives $V\ge0$. This proves \eqref{eq:quadratic-bound}.
 Taking $x=0$ and then difference
quotients proves \eqref{eq:value-gradient-zero}.
\end{proof}

Lemma~\ref{lem:quadratic-origin} identifies the origin as a stationary point
of the value function at every time, but it does not by itself identify the
entire zero-volatility set. Strict convexity and the regularity needed to use the
feedback law \eqref{eq:feedback} pointwise will be established in
Sections~\ref{sec:auxiliary}--\ref{sec:global}. Once these properties are
available, the asymptotics at $x=0$ show that the optimal control, and hence
the optimal volatility, admits the natural continuous value zero at $x=0$.
Thus
\[
\Gamma_0:=\{(t,0):0\le t\le T\}
\]
is the natural candidate for the zero-volatility set.

Motivated by Lemma~\ref{lem:quadratic-origin}, we divide the original domain
along $\Gamma_0$. Define
\[
\mathcal D_+:=\{(t,x):0\le t\le T,\ 0\le x\le r(t)\},\quad
\mathcal D_-:=\{(t,x):0\le t\le T,\ \ell(t)\le x\le0\},
\]
with interiors
\[
\mathcal D_+^\circ:=\{(t,x):0\le t<T,\ 0<x<r(t)\},\quad
\mathcal D_-^\circ:=\{(t,x):0\le t<T,\ \ell(t)<x<0\}.
\]
We use these subdomains only to construct the two branches of a global
solution of \eqref{eq:HJB-boundary-problem}; no separate
one-sided value functions are introduced. On the left, reflect the state by
writing the positive coordinate as $x=-\xi$ when the original state is
$\xi<0$. With the backward time change \eqref{eq:time-change}, introduce the
two one-sided data sets
\begin{equation}\label{eq:side-data}
	\begin{aligned}
		R_+(s)&:=r(t(s)),
		& Q_+(s)&:=g_r(t(s)),
		& h_+(x)&:=h(x),\\
		R_-(s)&:=-\ell(t(s)),
		& Q_-(s)&:=g_\ell(t(s)),
		& h_-(x)&:=h(-x).
	\end{aligned}
\end{equation}
Here $h_+$ is considered on $[0,R_+(0)]$, whereas $h_-$ is considered
on $[0,R_-(0)]$.

Formally, a sufficiently smooth strictly convex solution of
\eqref{eq:HJB-boundary-problem}, restricted to either side of
zero and expressed in the positive one-sided coordinate, leads to the
following pair of auxiliary problems:
\begin{equation}
	\label{eq:right-transformed-problem}
	\begin{cases}
		v_s^\pm(s,x)
		+\dfrac{v_x^\pm(s,x)^2}{v_{xx}^\pm(s,x)}=0,
		&0<x<R_\pm(s),\quad 0<s\le S,\\[3mm]
		v^\pm(0,x)=h_\pm(x),
		&0\le x\le R_\pm(0),\\[2mm]
		v^\pm(s,0)=0,\qquad
		v^\pm\bigl(s,R_\pm(s)\bigr)=Q_\pm(s),
		&0\le s\le S.
	\end{cases}
\end{equation}

The strict convexity used in these formal reductions is not assumed at this
stage. The problems in \eqref{eq:right-transformed-problem} only motivate
the independent dual construction developed in the next section. Since the
two one-sided problems have the same structure, we carry out the dual
construction for the right-hand data and subsequently apply the same argument
to the reflected left-hand data. Section~\ref{sec:global} will first
reconstruct and paste the two primal branches into a classical solution of
\eqref{eq:HJB-boundary-problem}, and will then identify that solution with the
value function \eqref{eq:value} by a separate verification argument.

\section{Dual free-boundary problem}
\label{sec:auxiliary}

\subsection{Formal duality and boundary comparison}
\label{subsec:formal-dual}

Since, after reflection, the left-hand problem differs from the right-hand
problem only through the data $(R_-,\allowbreak Q_-,\allowbreak h_-)$, the dual construction and the
arguments below apply to both problems in exactly the same way. It therefore
suffices to consider the right-hand problem. Throughout the remainder of this
section, we consider only the right-hand dual problem; the corresponding
results for the reflected left-hand problem follow by replacing
$(R_+,Q_+,h_+)$ with $(R_-,Q_-,h_-)$.

Suppose formally that $v^+(s,\cdot)$ is strictly convex and write its
restricted conjugate as
\[ 
w(s,p):=\sup_{0\le x\le R_+(s)}\{px-v^+(s,x)\}. 
\] 
At an interior dual pair, $p=v_x^+(s,x)$ and 
\[ 
x=w_p(s,p),\qquad 
v_{xx}^+(s,x)=\frac1{w_{pp}(s,p)}, 
\qquad 
v_s^+(s,x)=-w_s(s,p). 
\] 
Hence \eqref{eq:right-transformed-problem} formally becomes the free-boundary
system
\begin{equation}\label{eq:w-free-boundary-problem}
	\begin{gathered}
		w_s-p^2w_{pp}=0,
		\qquad 0<p<p_R(s),\quad 0<s\le S,\\
		w(s,0)=0,\qquad w(0,p)=H_+(p),\\
		w_p(s,p_R(s))=R_+(s),\qquad
		p_R(s)R_+(s)-w(s,p_R(s))=Q_+(s).
	\end{gathered}
\end{equation} 

Here the restricted terminal conjugate is 
\begin{equation}\label{eq:H-obstacle} 
	H_+(p):=\sup_{0\le x\le R_+(0)}\{px-h_+(x)\}, 
	\qquad p\ge0. 
\end{equation} 

A one-sided extension $\widehat h_+$ of $h_+$ is called admissible if it 
is superlinear, agrees with $h_+$ on $[0,R_+(0)]$, and satisfies 
\[ 
\widehat h_+\in C^2([0,\infty))\cap C^3((0,\infty)), 
\qquad 
\widehat h_+''>0\quad\text{on }(0,\infty). 
\] 
Its full conjugate is 
\begin{equation}\label{eq:full-conjugates} 
	H_{\infty,+}(p):=\sup_{x\ge0}\{px-\widehat h_+(x)\}, 
	\qquad p\ge0. 
\end{equation} 
For $p>0$, standard conjugacy gives 
\[ 
H_{\infty,+}'(p)=(\widehat h_+')^{-1}(p), 
\qquad 
H_{\infty,+}''(p) 
=\frac1{\widehat h_+''((\widehat h_+')^{-1}(p))}>0, 
\] 
and hence $H_{\infty,+}\in C^3((0,\infty))$.  We also require 
$\partial_p^kH_{\infty,+}$ to have at most polynomial growth as 
$p\to\infty$ for $k=0,1,2,3$. 

Let
\begin{equation}\label{eq:GM}
	Z_\theta:=e^{\sqrt2B_\theta-\theta},
	\qquad
	P_\theta^p:=pZ_\theta,
	\qquad \theta\ge0,\quad p\ge0.
\end{equation}
where $B$ is a standard Brownian motion.
For an admissible extension $\widehat h_+$, define
\begin{equation}\label{eq:free-conjugate-semigroup}
	\mathscr H_+(s,p):=\mathbb E[H_{\infty,+}(P_s^p)],
	\qquad s\ge0,\quad p\ge0.
\end{equation}
Lognormal moments justify
differentiation under the expectation and give 
$(\mathscr H_+)_{pp}>0$ for $s>0$ and $p>0$. 
Strict convexity gives a unique function $\overline p_+(s)>0$ satisfying 
\begin{equation}\label{eq:comparison-contact} 
	(\mathscr H_+)_p(s,\overline p_+(s))=R_+(s).
\end{equation} 
Consequently, the restriction of $\mathscr H_+$ to
$0\le p\le\overline p_+(s)$ satisfies
\begin{equation}\label{eq:comparison-domain}
	(\mathscr H_+)_s-p^2(\mathscr H_+)_{pp}=0,
	\qquad 0<p<\overline p_+(s),\quad 0<s\le S,
\end{equation}
with
\begin{equation}\label{eq:comparison-boundary}
	\begin{gathered}
		\mathscr H_+(s,0)=0,
		\qquad \mathscr H_+(0,p)=H_{\infty,+}(p),\\
		(\mathscr H_+)_p(s,\overline p_+(s))=R_+(s).
	\end{gathered}
\end{equation}
Thus $\mathscr H_+$ provides an explicit comparison solution whose slope
matches the moving state boundary at $\overline p_+$.

The affine payoff associated with the moving state boundary is
\begin{equation}\label{eq:Phi-obstacle}
	\Phi_+(s,p):=pR_+(s)-Q_+(s).
\end{equation}

\begin{Assumption}[Boundary comparison]
	\label{ass:dual-comparison}
	There exist admissible extensions $\widehat h_+$ and $\widehat h_-$ such
	that, for each sign, $0\le s\le S$, and
	$p\ge\overline p_\pm(s)$,
	\begin{equation}\label{eq:dual-comparison-condition}
		pR_\pm'(s)-Q_\pm'(s)
		\ge
		(\mathscr H_\pm)_s
		\bigl(s,\overline p_\pm(s)\bigr).
	\end{equation}
\end{Assumption}

Since \eqref{eq:dual-comparison-condition} holds for every
$p\ge\overline p_\pm(s)$, it also implies $R_\pm'(s)\ge0$.

\begin{Remark}
	\label{rem:boundary-comparison}
	For the right-hand problem, the classical compatibility condition for a
	regular parabolic free-boundary
	problem requires
	\[
	(\partial_s-p^2\partial_{pp})\Phi_+
	=pR_+'-Q_+'\ne0;
	\]
	see \cite[conditions~(1.7)$'$ and~(1.14)]{FasanoPrimicerio1979}
	and the related formulations in \cite{Sherman1971,Meyer1977}.
	In the present convex setting, the relevant sign is positive.
	Assumption~\ref{ass:dual-comparison} strengthens this condition to
	\[
	pR_+'-Q_+'
	\ge (\mathscr H_+)_s(s,\overline p_+)>0,
	\qquad p\ge\overline p_+(s).
	\]
	The additional margin yields an upper bound for the free
	boundary.  The left-hand condition follows by reflection.
\end{Remark}

The assumption is nonempty.  For example, take
$R_\pm\equiv\rho_\pm>0$ and $h_\pm(x)=x^2/2$.  Then
\[
\mathscr H_\pm(s,p)=\frac12e^{2s}p^2,
\qquad
(\mathscr H_\pm)_s(s,\overline p_\pm(s))
=\rho_\pm^2e^{-2s}.
\]
Thus
\[
Q_\pm(s)=\frac12\rho_\pm^2e^{-2s}-\delta_\pm s,
\qquad \delta_\pm\ge0,
\]
satisfies \eqref{eq:dual-comparison-condition} whenever it remains positive.

Fix these extensions henceforth.  Although $\mathscr H_+$,
$\overline p_+$ depend on the chosen extension, $H_+$ and the
stopping problem below do not.  Since neither the required strict convexity
nor the existence of $p_R$ has yet been proved, we do not solve
\eqref{eq:w-free-boundary-problem} directly. Instead, the
next subsection constructs the dual solution independently as an
optimal-stopping value and recovers $p_R$ as its stopping boundary.

\subsection{Optimal stopping construction of the dual free boundary}
\label{subsec:dual-obstacle}

The restricted conjugate $H_+$ in \eqref{eq:H-obstacle} satisfies
$H_+\ge\Phi_+(0,\cdot)$, with equality if and only if
$p\ge h_+'(R_+(0))$.

For the process $P^p$ introduced in
\eqref{eq:GM},
\[
dP_\theta^p=\sqrt{2}P_\theta^p\,dB_\theta.
\]
Thus $P^p$ is a nonnegative martingale with generator
$p^2\partial_{pp}$. For $(s,p)\in[0,S]\times[0,\infty)$, define
the optimal stopping value
\begin{equation}
	\label{eq:dual-obstacle-stopping}
	\mathcal{W}(s,p)
	:=
	\sup_{0\le\tau\le s}
	\mathbb E\left[
	\Phi_+(s-\tau,P_\tau^p)\mathbf 1_{\{\tau<s\}}
	+
	H_+(P_s^p)\mathbf 1_{\{\tau=s\}}
	\right],
\end{equation}
where the supremum is taken over all stopping times with values in $[0,s]$.
Since both rewards have at most linear growth, the martingale property of
$P^p$ ensures that $\mathcal W$ is finite.

Define the stopping and continuation regions by
\[
\mathcal S:=\{(s,p):\mathcal{W}(s,p)=\Phi_+(s,p)\},
\qquad
\mathcal C:=\{(s,p):\mathcal{W}(s,p)>\Phi_+(s,p)\}.
\]
For each $s\in[0,S]$, denote their spatial sections by
\[
\mathcal S_s:=\{p\ge0:(s,p)\in\mathcal S\},
\qquad
\mathcal C_s:=\{p\ge0:(s,p)\in\mathcal C\}.
\]

\begin{Proposition}
	\label{prop:stopping-boundary}
	Under Assumptions~\ref{ass:coefficients}
	and~\ref{ass:dual-comparison},
	$\mathcal W$ is continuous on $[0,S]\times[0,\infty)$. For every
	$s\in[0,S]$, the function $\mathcal W(s,\cdot)$ is nondecreasing and
	convex, and
	\begin{equation}
		\label{eq:W-growth}
		0\le\mathcal W(s,p)\le R_+(s)p,
		\qquad p\ge0.
	\end{equation}

	There exists a unique stopping boundary
	$\beta:[0,S]\to(0,\infty)$ such that
	\begin{equation*}
		\mathcal S_s=[\beta(s),\infty),
		\qquad
		\mathcal C_s=[0,\beta(s)).
	\end{equation*}
	The function $\beta$ is lower semicontinuous on $[0,S]$ and satisfies
	\begin{equation}
		\label{eq:beta-bounds}
		\frac{Q_+(s)}{R_+(s)}
		\le\beta(s)\le\overline p_+(s),
		\qquad 0\le s\le S,
	\end{equation}
	and
	\begin{equation}
		\label{eq:beta-initial-limit}
		\beta(0)=h_+'(R_+(0)),
		\qquad
		\lim_{s\downarrow0}\beta(s)=h_+'(R_+(0)).
	\end{equation}

	Finally, $(\mathcal W,\beta)$ satisfies
	\eqref{eq:w-free-boundary-problem} with
	$(w,p_R)$ replaced by $(\mathcal W,\beta)$, where the derivative at
	$p=\beta(s)$ is understood from the continuation region.
\end{Proposition}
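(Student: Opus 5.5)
We follow the standard optimal-stopping route: first derive the qualitative properties of $\mathcal W$ directly from the representation \eqref{eq:dual-obstacle-stopping}, and then obtain the free-boundary structure from a comparison between $\Phi_+$ and the two reference functions $H_+$ and $\mathscr H_+$.

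\emph{Elementary properties.} Since $P^p_\theta=pZ_\theta$ is linear in $p$, and $\Phi_+(s-\tau,\cdot)$ and $H_+$ are convex and nondecreasing in $p$, the reward of each fixed stopping time is convex and nondecreasing in $p$. A supremum of such functions is again convex and nondecreasing, so $\mathcal W(s,\cdot)$ has these properties.

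For the lower bound in \eqref{eq:W-growth}, take $\tau=s$ and use $H_+\ge0$. For the upper bound, use $H_+(q)\le R_+(0)q$ and $\Phi_+(s-\tau,q)\le R_+(s-\tau)q$. Assumption~\ref{ass:dual-comparison} gives $R_+'\ge0$, so $R_+(s-\tau)\le R_+(s)$. The martingale property of $P^p$ then yields $\mathcal W(s,p)\le R_+(s)p$, and in particular $\mathcal W(s,0)=0$.

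Continuity in $p$ follows from the uniform Lipschitz bound $R_+(s)\le\max R_+$. Continuity in $s$ follows by the usual time-shift argument for finite-horizon stopping problems. Truncate stopping times at the smaller horizon and use the $C^1$ regularity of $R_+$ and $Q_+$, the Lipschitz continuity of $H_+$, and $\mathbb E|P^p_{s}-P^p_{s'}|\to0$.

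\emph{Structure of the stopping set.} Define $\beta(s):=\inf\mathcal S_s$. The map $p\mapsto\mathcal W(s,p)-\Phi_+(s,p)$ is convex minus affine. It is nonincreasing because its right derivative satisfies $\partial_p\mathcal W\le R_+(s)$, which follows from convexity together with $\mathcal W\le R_+(s)p$. Hence $\mathcal S_s$ is an interval $[\beta(s),\infty)$, closed by continuity, and $\mathcal S$ is closed. Closedness of $\mathcal S$ gives the lower semicontinuity of $\beta$.

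The lower bound in \eqref{eq:beta-bounds} holds because $\mathcal W\ge0>\Phi_+$ for $p<Q_+/R_+$. At $s=0$ we have $\mathcal W(0,\cdot)=H_+$, so the equality criterion for $H_+=\Phi_+(0,\cdot)$ gives $\beta(0)=h_+'(R_+(0))$.

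For the upper bound $\beta\le\overline p_+$, I would build a supersolution. Define $\Psi(s,p)$ to equal $\mathscr H_+(s,p)$ for $p\le\overline p_+(s)$. For $p\ge\overline p_+(s)$, extend it by the tangent line $\mathscr H_+(s,\overline p_+)+R_+(s)(p-\overline p_+)$.
\begin{itemize}
\item Because $\mathscr H_+(s,\cdot)$ is convex, $\Psi$ is $C^1$ in $p$ and convex.
\item Since $H_{\infty,+}\ge H_+$, we get $\Psi(0,\cdot)\ge H_+$.
\item $\Psi$ dominates $\Phi_+$: at $\overline p_+$ this is checked by integrating \eqref{eq:dual-comparison-condition} in $s$ from $0$, using the corner relation $Q_+(0)=h_+(R_+(0))$.
\item $\Psi$ satisfies $(\partial_s-p^2\partial_{pp})\Psi\ge0$ in the generalized sense. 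On the linear part this uses $\partial_{pp}\Psi=0$ together with the time derivative of the tangent line, where Assumption~\ref{ass:dual-comparison} provides the required sign.
\end{itemize}
Itô--Tanaka then gives $\mathcal W\le\Psi$. To obtain $\beta\le\overline p_+$, compare the increment of $\mathcal W-\Phi_+$ beyond $\overline p_+$. In the region $p\ge\overline p_+$, the expected gain from continuing is controlled by $\mathbb E\int(\partial_s-p^2\partial_{pp})\Phi_+\,d\theta$. This drift is $-(pR_+'-Q_+')\le-(\mathscr H_+)_s$, so continuation is suboptimal. I expect making this last step rigorous to be the main obstacle. I would first try the direct argument: show that for $p\ge\overline p_+(s)$ one has $\mathcal W(s,p)-\Phi_+(s,p)\le\Psi-\Phi_+$ and that $\Psi-\Phi_+$ vanishes there, by comparing $\Phi_+$ with the tangent line. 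The limit $\lim_{s\downarrow0}\beta(s)=h_+'(R_+(0))$ then follows from lower semicontinuity together with an upper estimate of $\limsup\beta$, obtained from the same comparison applied to $H_+$ near $s=0$.

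\emph{Free-boundary system.} In $\mathcal C$, which is open, standard optimal-stopping theory shows that $\mathcal W$ is $C^{1,2}$ and solves $\mathcal W_s=p^2\mathcal W_{pp}$. This follows from local solvability of the uniformly parabolic Dirichlet problem on rectangles away from $p=0$, combined with the strong Markov property. The boundary conditions $\mathcal W(s,0)=0$ and $\mathcal W(0,\cdot)=H_+$ have already been shown. At $p=\beta(s)$, continuity gives $\beta R_+-\mathcal W=Q_+$. The condition $\mathcal W_p(s,\beta(s)-)=R_+(s)$ is the smooth-fit property: $\mathcal W-\Phi_+\ge0$ vanishes at $\beta$ and is convex minus affine, so its left derivative there is $\le0$. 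The reverse inequality follows from the standard argument based on the law of the iterated logarithm for $P^p$ started at $\beta$, which applies because $\Phi_+$ is smooth in $p$.
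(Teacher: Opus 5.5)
There is a genuine gap in the upper bound $\beta\le\overline p_+$. Most of the rest of your plan is sound:
\begin{itemize}
\item monotonicity and convexity, and the bound $0\le\mathcal W\le R_+(s)p$;
\item the Lipschitz bound from convexity plus linear growth, a valid substitute for the paper's coupling;
\item the threshold structure, the lower bound $Q_+/R_+\le\beta$, and lower semicontinuity.
\end{itemize}
In the time-shift argument for continuity, say explicitly what happens on $\{s'\le\tau<s\}$. There, truncation replaces $\Phi_+(s-\tau,P_\tau)\approx\Phi_+(0,P_\tau)$ by $H_+(P_{s'})$. This is harmless only because of the favorable jump $\Phi_+(0,\cdot)\le H_+$.

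The upper bound $\beta\le\overline p_+$ carries a lot: nonemptiness of $\mathcal S_s$ (so that $\beta:=\inf\mathcal S_s$ is defined), the uniform bounds, and the limit as $s\downarrow0$. Your $\Psi$ does not dominate $\Phi_+$. On its linear branch $\Psi-\Phi_+\equiv-c(s)$, where
\[
c(s):=\overline p_+(s)R_+(s)-\mathscr H_+\bigl(s,\overline p_+(s)\bigr)-Q_+(s),
\qquad
c'(s)=\overline p_+R_+'-Q_+'-(\mathscr H_+)_s(s,\overline p_+)\ge0,
\qquad
c(0)=0.
\]
The $\overline p_+'$ terms cancel by \eqref{eq:comparison-contact}, and $c(0)=0$ is corner compatibility. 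So integrating Assumption~\ref{ass:dual-comparison} from $0$ gives $c\ge0$, that is, $\Psi\le\Phi_+$ for $p\ge\overline p_+(s)$. This is the reverse of what you claim.

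When the assumption is strict, for instance in Example~\ref{ex:boundary-perturbation} where $c(s)=\delta_\pm s$, you get $\Psi<\Phi_+\le\mathcal W$ there. Hence $\mathcal W\le\Psi$ is false, and $\Psi-\Phi_+$ does not vanish beyond $\overline p_+$. Your fallback, that the obstacle is a supersolution on $\{p\ge\overline p_+\}$, is only local. The process can move into $\{p<\overline p_+\}$ and collect continuation value there, so this does not show that stopping is optimal.

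The missing idea is the time-dependent shift. Set $\overline{\mathcal W}:=\mathscr H_++c$ for $p<\overline p_+(s)$ and $\overline{\mathcal W}:=\Phi_+$ for $p\ge\overline p_+(s)$. This function has the following properties.
\begin{itemize}
\item It is $C^1$ in $p$ and convex, hence it dominates $\Phi_+$.
\item Since $c(0)=0$ and $\overline p_+(0)=h_+'(R_+(0))$, it satisfies $\overline{\mathcal W}(0,\cdot)=H_+$.
\item $(\partial_s-p^2\partial_{pp})\overline{\mathcal W}$ equals $c'\ge0$ below $\overline p_+$ and $pR_+'-Q_+'\ge0$ above.
\end{itemize}
The last inequality is where the assumption is used for all $p\ge\overline p_+$. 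Your linear branch used only $R_+'\ge0$, which signals that the assumption had been put in the wrong place. Optional sampling then gives $\Phi_+\le\mathcal W\le\overline{\mathcal W}=\Phi_+$ for $p\ge\overline p_+$, which is contact, with no separate increment argument. It also gives $\limsup_{s\downarrow0}\beta(s)\le\lim_{s\downarrow0}\overline p_+(s)=h_+'(R_+(0))$.

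A secondary weakness is smooth fit. The LIL argument needs the space-time process started at $(s,\beta(s))$ to enter $\{p\ge\beta(s-\theta)\}$ immediately. That requires an upper bound on $\beta(s-\theta)$ for small $\theta$, whereas lower semicontinuity only bounds it from below. The paper avoids this by proving $W^{2,1}_{\infty,\mathrm{loc}}$ regularity of the gap in logarithmic coordinates via \cite{AnderssonLindgrenShahgholian2013}. That gives continuity of $\mathcal W_p$ across the contact set at every boundary point, regardless of how regular $\beta$ is.
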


The proof is given in Appendix~\ref{app:one-obstacle-boundary}.

The bounds \eqref{eq:beta-bounds}, positivity of the costs, and compactness
give constants $c_*,c^*>0$ such that
\begin{equation}\label{eq:beta-uniform-bounds}
0<c_*\le \beta(s)\le c^*<\infty,
\qquad 0\le s\le S.
\end{equation}

\subsection{Asymptotics near \texorpdfstring{$p=0$}{p = 0}}
\label{sec:asymptotics}

By \eqref{eq:beta-uniform-bounds}, a fixed neighborhood of $p=0$ lies in the continuation region for
every $s$.

\begin{Proposition}[Superalgebraic decay of the boundary correction]
	\label{prop:flat-correction}
	Under Assumptions~\ref{ass:coefficients}
	and~\ref{ass:dual-comparison}, choose $p_0$ so that
\[
0<p_0<\min\left\{\inf_{0\le s\le S}\beta(s),\ h_+'(R_+(0))\right\}.
\]
Then, for $k=0,1,2,3$ and every $N>0$,
\begin{equation}\label{eq:flat-correction}
 \sup_{0\le s\le S}
 \left|\partial_p^k\bigl(\mathcal W(s,p)-\mathscr H_+(s,p)\bigr)\right|
 =O(p^N),\qquad p\downarrow0.
\end{equation}
\end{Proposition}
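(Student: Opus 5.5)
The plan is to work with the difference $D:=\mathcal W-\mathscr H_+$ on the fixed strip $[0,S]\times[0,p_0]$. On this strip both functions solve the same equation with the same initial data. $D$ is then a solution of a heat equation (in logarithmic coordinates) with zero initial data, zero data at $p=0$, and bounded data on the line $p=p_0$. The smallness near $p=0$ comes from the Gaussian tail of the time a geometric Brownian motion started at $p$ needs to reach $p_0$. In logarithmic coordinates this tail is $\exp(-c\log^2(p_0/p))$, which is $O(p^N)$ for every $N$.

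\emph{Step 1 (equation and data for $D$).} Since $p_0<\inf_s\beta(s)$, Proposition~\ref{prop:stopping-boundary} places $[0,S]\times[0,p_0]$ in the continuation region. There $\mathcal W$ solves $\mathcal W_s-p^2\mathcal W_{pp}=0$, classically in the interior by standard interior regularity for the stopping problem. By \eqref{eq:comparison-domain}, the same equation holds for $\mathscr H_+$, because $p_0<\beta(s)\le\overline p_+(s)$. Both vanish at $p=0$. For the initial data, when $0\le p\le h_+'(R_+(0))$ the supremum defining $H_{\infty,+}(p)$ is attained at $x=(\widehat h_+')^{-1}(p)\le R_+(0)$, where $\widehat h_+=h_+$. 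Hence $H_+=H_{\infty,+}$ on $[0,p_0]$, and $D(0,\cdot)\equiv0$ there. On the lateral line, $|D(s,p_0)|\le C$ by \eqref{eq:W-growth} and the polynomial growth of $H_{\infty,+}$.

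\emph{Step 2 (zeroth order bound).} Let $\sigma_p$ be the first time $P^p$ reaches $p_0$. By Itô's formula applied to $D(s-\theta,P^p_\theta)$, stopped at $\sigma_p\wedge s$, together with $D(0,\cdot)=0$ on $[0,p_0]$,
\[
|D(s,p)|\le C\,\mathbb P(\sigma_p\le S).
\]
The event $\{\sigma_p\le S\}$ requires $\sup_{\theta\le S}(\sqrt2B_\theta-\theta)\ge\log(p_0/p)$. The reflection principle then gives
\[
\mathbb P(\sigma_p\le S)\le 2\exp\!\Bigl(-\frac{\log^2(p_0/p)}{4S}\Bigr),
\]
which is $O(p^N)$ for every $N$, uniformly in $s\in[0,S]$.

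\emph{Step 3 (derivatives, the main obstacle).} Set $y=\log p$ and $\tilde D(s,y)=D(s,e^y)$. Then $\tilde D_s=\tilde D_{yy}-\tilde D_y$ on $y<\log p_0$, a constant-coefficient parabolic equation with zero initial data. The difficulty is obtaining derivative bounds uniformly up to $s=0$. To handle this, extend $\tilde D$ by zero to $s\in[-1,0]$. Because the initial data vanish, the extension is a (weak, hence classical) solution on $[-1,S]\times(-\infty,\log p_0)$, so $s=0$ becomes an interior time. Interior parabolic estimates on cylinders $[-1,S]\times[y-1,y+1]$, for $y\le\log p_0-2$, then bound $\partial_y^k\tilde D(s,y)$ for $k\le3$ by $C\sup|\tilde D|$ over the enlarged cylinder. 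By Step 2 this supremum is at most $C\exp(-c(y-\log p_0)^2)$. Converting back, $\partial_p^kD$ is a combination of $p^{-k}\partial_y^j\tilde D$ with $j\le k$. The factor $p^{-k}$ is absorbed by the superalgebraic decay, which yields \eqref{eq:flat-correction} for $k=0,\dots,3$.

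The point to verify carefully in Step 3 is that $\mathcal W$ is smooth enough up to $s=0$ near $p=0$ for the zero extension to be legitimate. This needs continuity of $\mathcal W$, from Proposition~\ref{prop:stopping-boundary}, and agreement of the initial data on $[0,p_0]$, from Step 1. If the weak-solution argument is awkward, the fallback is a direct heat-kernel representation. On the half-line $y<\log p_0$ with zero initial data, $\tilde D$ is the double-layer potential of its boundary values. Its $y$-derivatives are then explicit integrals of kernel derivatives against bounded data, with Gaussian decay in the distance $\log(p_0/p)$.
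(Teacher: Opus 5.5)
Your proof is correct, but it takes a different route from the paper's. Your Step 1 matches the paper.

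\emph{The paper's route.} The paper passes to $z=\log(p_0/p)$ and gauges away the drift. It then writes the correction explicitly through the half-line Dirichlet Poisson kernel $\frac{z}{2\sqrt\pi\,r^{3/2}}\exp\bigl(-\frac{z^2}{4r}-\frac z2-\frac r4\bigr)$, integrated against the lateral trace $e(s,p_0)$. This representation is identified with the solution via maximum-principle uniqueness for bounded solutions. The paper then differentiates the kernel up to three times in $z$ and absorbs the resulting negative powers of $r$ into half of the Gaussian factor. That gives $|\partial_z^jE|\le C_je^{-cz^2}$ uniformly in $s$, including near $s=0$. This is exactly your fallback.

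\emph{Your route.} You split the work into two parts.
\begin{itemize}
\item The zeroth-order Gaussian decay comes from optional stopping of $D(s-\theta,P^p_\theta)$ at the hitting time of $p_0$, plus the reflection principle. This is the stochastic form of the Poisson representation. It needs only localization and continuity of $D$ on the closed strip, not a separate uniqueness theorem on an unbounded domain.
\item The derivatives come from translation-invariant interior estimates for the constant-coefficient operator $\partial_s-\partial_{yy}+\partial_y$. Uniformity up to $s=0$ comes from extending by zero.
\end{itemize}

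The extension step you flagged is legitimate. Integrate by parts against a test function on $[\epsilon,S]$. The only leftover term is $\int\tilde D(\epsilon,y)\varphi(\epsilon,y)\,dy$, and it tends to $0$ because $\tilde D$ is continuous up to $s=0$ with zero trace on $(-\infty,\log p_0]$. So the extension is a continuous distributional solution of a hypoelliptic constant-coefficient operator, hence smooth. Also, the backward cylinders of points with $s\in[0,S]$ fit inside $[-1,S]\times[y-1,y+1]$, so the interior estimates apply uniformly.

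\emph{Trade-off.} The paper gets an explicit formula and a direct kernel computation. Your approach avoids differentiating the kernel. It would also survive variable (H\"older) coefficients, since it uses only a probabilistic sup bound and local parabolic estimates.
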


\begin{proof}
Set
\[
 e(s,p):=\mathcal W(s,p)-\mathscr H_+(s,p).
\]
Because $p_0<\inf_{0\le s\le S}\beta(s)$, the fixed strip
$(0,S]\times(0,p_0)$ lies in the continuation region. Hence both
$\mathcal W$ and $\mathscr H_+$ are classical there and satisfy
$u_s-p^2u_{pp}=0$.

We first identify all the data for their difference. At $s=0$,
\[
 \mathcal W(0,p)=H_+(p),\qquad
 \mathscr H_+(0,p)=H_{\infty,+}(p).
\]
For $0<p\le p_0<h_+'(R_+(0))$, the maximizer in the full conjugate is
\[
 x_p=(\widehat h_+')^{-1}(p)<R_+(0).
\]
It therefore also maximizes the restricted conjugate, so
$H_+(p)=H_{\infty,+}(p)$ on $[0,p_0]$. Thus $e(0,p)=0$ there. Moreover,
\eqref{eq:W-growth} gives $\mathcal W(s,0)=0$, while
$\mathscr H_+(s,0)=H_{\infty,+}(0)=0$. If
\[
 \psi(s):=e(s,p_0),
\]
then continuity gives $\psi\in C([0,S])$ and $\psi(0)=0$. Consequently,
$e$ is a bounded solution on $[0,S]\times[0,p_0]$ of
\begin{equation}\label{eq:flat-error-equation}
 \begin{cases}
  e_s-p^2e_{pp}=0,
  &0<s\le S,\quad 0<p<p_0,\\
  e(0,p)=0,\qquad e(s,0)=0,\\
  e(s,p_0)=\psi(s).
 \end{cases}
\end{equation}
Here $e(0,p_0)=\psi(0)=0$ is the corner compatibility condition. Since
$e$ is continuous on the compact rectangle, the last condition also implies
\begin{equation}\label{eq:flat-uniform-endpoint-trace}
 \lim_{p\downarrow0}\sup_{0\le s\le S}|e(s,p)|=0.
\end{equation}

Now set
\[
 z:=\log\frac{p_0}{p},
 \qquad E(s,z):=e(s,p_0e^{-z}).
\]
The chain rule gives $p^2e_{pp}=E_{zz}+E_z$. Hence
\[
 E_s-E_{zz}-E_z=0,\qquad
 E(0,z)=0,\qquad E(s,0)=\psi(s),
\]
and \eqref{eq:flat-uniform-endpoint-trace} becomes
$E(s,z)\to0$ as $z\to\infty$, uniformly in $s$. After the gauge transform
$U=e^{z/2+s/4}E$, the function $U$ satisfies the standard heat equation on
the half-line. The Dirichlet Poisson formula
\cite[Chapter~6]{Cannon1984} therefore yields
\begin{equation}\label{eq:flat-drifted-poisson}
 E(s,z)=\int_0^s
 \frac{z}{2\sqrt\pi r^{3/2}}
 \exp\!\left(-\frac{z^2}{4r}-\frac z2-\frac r4\right)
 \psi(s-r)\,dr.
\end{equation}
Indeed, for $z>0$ differentiation under the integral verifies the equation;
the heat Poisson kernel is an approximation to the identity at $z=0$, giving
$E(s,0)=\psi(s)$, while the zero initial condition and the limit at infinity
follow directly from the formula. Uniqueness for bounded solutions, by the
maximum principle, identifies \eqref{eq:flat-drifted-poisson} with $E$.

It remains only to estimate the explicit kernel. For $j=0,1,2,3$, splitting
$e^{-z^2/(4r)}$ into two Gaussian factors and using $0<r\le S$ gives
\begin{equation}\label{eq:flat-log-derivatives}
 \sup_{0\le s\le S}|\partial_z^jE(s,z)|
 \le C_j e^{-cz^2},
 \qquad z\ge1.
\end{equation}
The negative powers of $r$ arising after differentiation are absorbed by the
second Gaussian factor, as is seen from the substitution $q=z^2/(8r)$.

Finally, $\partial_p=-p^{-1}\partial_z$, and for $k=1,2,3$,
\[
 \partial_p^k e(s,p)
 =(-1)^kp^{-k}\prod_{j=0}^{k-1}(\partial_z+j)E(s,z),
 \qquad z=\log\frac{p_0}{p}.
\]
Thus \eqref{eq:flat-log-derivatives} implies
\[
 \sup_{0\le s\le S}|\partial_p^ke(s,p)|
 \le C_kp^{-k}
 \exp\!\left[-c\left(\log\frac{p_0}{p}\right)^2\right]
 =o(p^N)
\]
for every $N>0$ and $k=0,1,2,3$. This proves
\eqref{eq:flat-correction}.
\end{proof}

The stopping construction and Proposition~\ref{prop:flat-correction} require
no power-law behavior of the terminal loss at the origin.  To identify the
leading behavior near $p=0$ and the scale of the optimal feedback, we now impose
the following additional local condition.

\begin{Assumption}[Smooth finite-order terminal minimum]
	\label{ass:finite-order}
	For each sign, there exist $m_\pm\ge2$ and $c_\pm>0$ such that
	\begin{equation}\label{eq:h-finite-order}
		\partial_x^k h_\pm(x)
		=
		\partial_x^k\left(\frac{c_\pm}{m_\pm}x^{m_\pm}\right)
		+o\bigl(x^{m_\pm-k}\bigr),
		\qquad k=0,1,2,3,
		\qquad x\downarrow0.
	\end{equation}
\end{Assumption}

The two sides may have different orders.  Define
\begin{equation}\label{eq:q-and-Ch}
	q_\pm:=\frac{m_\pm}{m_\pm-1},
	\qquad
	C_{h,\pm}:=\frac1{q_\pm}c_\pm^{-1/(m_\pm-1)}.
\end{equation}
For sufficiently small $p>0$, the maximizer in the definition of
$H_{\infty,\pm}(p)$ lies near the origin, where the admissible extension
agrees with $h_\pm$.  Conjugate inversion therefore gives
\[
(H_{\infty,\pm})_p(p)=(h_\pm')^{-1}(p),\qquad
(H_{\infty,\pm})_{pp}(p)
=\frac{1}{h_\pm''((h_\pm')^{-1}(p))},
\]
and
\[
(H_{\infty,\pm})_{ppp}(p)
=
-\frac{h_\pm'''((h_\pm')^{-1}(p))}
{\bigl[h_\pm''((h_\pm')^{-1}(p))\bigr]^3}.
\]
Substituting \eqref{eq:h-finite-order} into these identities yields
\begin{equation}\label{eq:smooth-finite-order}
	\partial_p^kH_{\infty,\pm}(p)
	=
	\partial_p^k\bigl(C_{h,\pm}p^{q_\pm}\bigr)
	+o\bigl(p^{q_\pm-k}\bigr),
	\qquad k=0,1,2,3,
	\qquad p\downarrow0.
\end{equation}
If $h''(0)>0$, the $C^2$ regularity of the terminal loss and
\eqref{eq:h-finite-order} necessarily give
\[
m_+=m_-=2,
\qquad
c_+=c_-=h''(0).
\]
Convexity and $C^2$ regularity alone do not imply
Assumption~\ref{ass:finite-order}, particularly when $h''(0)=0$, because
they provide no corresponding control of the third-order remainder.

\begin{Proposition}[Dual asymptotics near $p=0$]
	\label{prop:asymptotics_p0}
	Suppose that Assumptions~\ref{ass:coefficients},
	\ref{ass:dual-comparison}, and~\ref{ass:finite-order} hold.
	Then, uniformly for $s\in[0,S]$ as $p\downarrow0$,
	\begin{equation}\label{eq:W-general-power}
		\partial_p^k \mathcal W(s,p)
		=
		\partial_p^k\!\left(
		C_{h,+}e^{q_+(q_+-1)s}p^{q_+}
		\right)
		+o(p^{q_+-k}),
		\qquad k=0,1,2,3.
	\end{equation}
	In particular, there exists $p_*>0$ such that
	\begin{equation}\label{eq:dual-second-derivative-seed}
		\mathcal W_{pp}(s,p)>0,
		\qquad
		0\le s\le S,\quad 0<p\le p_*,
	\end{equation}
	and
	\begin{equation}\label{eq:weighted-dual-second-derivative-near-zero}
		\sup_{\substack{0\le s\le S\\0<p\le p_*}}
		p\,\mathcal W_{pp}(s,p)<\infty.
	\end{equation}
\end{Proposition}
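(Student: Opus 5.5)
By Proposition~\ref{prop:flat-correction}, it suffices to prove \eqref{eq:W-general-power} with $\mathcal W$ replaced by $\mathscr H_+$. Indeed, the difference $\partial_p^k(\mathcal W-\mathscr H_+)$ is $O(p^N)$ uniformly in $s$ for every $N$. Taking $N>q_+$ makes it $o(p^{q_+-k})$.

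The comparison function has the explicit representation $\mathscr H_+(s,p)=\mathbb E[H_{\infty,+}(pZ_s)]$, so the problem reduces to lognormal averaging of the expansion \eqref{eq:smooth-finite-order}. Differentiation under the expectation, justified by the polynomial growth of $\partial_p^kH_{\infty,+}$ and lognormal moments, gives
\[
\partial_p^k\mathscr H_+(s,p)=\mathbb E\bigl[Z_s^k\,(\partial_p^kH_{\infty,+})(pZ_s)\bigr],\qquad k=0,1,2,3 .
\]
For the model term, the identity $\mathbb E[Z_s^{q}]=e^{q(q-1)s}$ yields
\[
\mathbb E\bigl[Z_s^k\,\partial_p^k(C_{h,+}P^{q_+})|_{P=pZ_s}\bigr]=\partial_p^k\bigl(C_{h,+}e^{q_+(q_+-1)s}p^{q_+}\bigr).
\]
This holds because $\partial_p^k p^{q}=q(q-1)\cdots p^{q-k}$ and $Z^k\cdot Z^{q-k}=Z^{q}$. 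Thus only the remainder needs control. Write
\[
\rho_k(P):=\partial_p^kH_{\infty,+}(P)-\partial_p^k(C_{h,+}P^{q_+}).
\]
By \eqref{eq:smooth-finite-order} we have $|\rho_k(P)|\le\varepsilon(P)P^{q_+-k}$ with $\varepsilon(P)\to0$ as $P\downarrow0$. Globally, polynomial growth gives $|\rho_k(P)|\le C(P^{q_+-k}+P^{n})$ for some $n$. Here I use that $q_+-k>-1$ when $k\le3$ only if $q_+>2$. That is the main obstacle, discussed below.

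To show $\mathbb E[Z_s^k\rho_k(pZ_s)]=o(p^{q_+-k})$ uniformly for $s\in[0,S]$, fix $\delta>0$ and split on the events $\{pZ_s\le\eta\}$ and $\{pZ_s>\eta\}$, where $\eta$ is chosen so that $\varepsilon\le\delta$ on $(0,\eta]$. On the first event the contribution is at most $\delta p^{q_+-k}\mathbb E[Z_s^{q_+}]\le\delta e^{q_+(q_+-1)S}p^{q_+-k}$. On the second event, Markov/Chebyshev with high moments bounds $\mathbb P(Z_s>\eta/p)$ by $O(p^{N})$ uniformly in $s\le S$. Cauchy--Schwarz then makes the tail contribution $O(p^N)$. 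The potentially singular factor $P^{q_+-k}$ for $k=2,3$ causes no trouble here, because on the tail event $P>\eta$ is bounded below.

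The only delicate point is therefore the case of a negative exponent $q_+-k<0$ on the small-$P$ event. This is harmless, since the small-$P$ bound is expressed multiplicatively through $Z_s^{q_+}$, whose moments are finite. This completes \eqref{eq:W-general-power}.

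For \eqref{eq:dual-second-derivative-seed}, the case $k=2$ gives
\[
\mathcal W_{pp}=C_{h,+}q_+(q_+-1)e^{q_+(q_+-1)s}p^{q_+-2}\bigl(1+o(1)\bigr)
\]
uniformly in $s$. Since $q_+>1$ and the prefactor is bounded below by a positive constant, $\mathcal W_{pp}>0$ for $0<p\le p_*$ once $p_*$ is small. For \eqref{eq:weighted-dual-second-derivative-near-zero}, we get $p\,\mathcal W_{pp}=O(p^{q_+-1})$ uniformly, and $q_+-1=1/(m_+-1)>0$, so this is bounded on $(0,p_*]$. The step needing the most care is the uniform-in-$s$ tail estimate, including the case $s=0$, where $Z_0=1$ and the claim is just \eqref{eq:smooth-finite-order} itself.
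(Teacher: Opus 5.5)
Your proposal is correct and follows the paper's proof essentially step for step: you differentiate $\mathscr H_+(s,p)=\mathbb E[H_{\infty,+}(pZ_s)]$ under the expectation, split on $\{pZ_s\le\eta\}$ and its complement (the finite-order expansion times $Z_s^{q_+}$ on the first event, Markov's inequality with high lognormal moments on the second), transfer the result to $\mathcal W$ via Proposition~\ref{prop:flat-correction}, and read off both consequences from the case $k=2$. Your explicit bound $\delta\,e^{q_+(q_+-1)S}p^{q_+-k}$ gives uniformity in $s$ slightly more directly than the paper's dominated-convergence-plus-truncation argument; the aside about needing $q_+-k>-1$ is a non-issue (and could never hold for $k=3$, since $q_+\le 2$), because $Z_s^k(pZ_s)^{q_+-k}=p^{q_+-k}Z_s^{q_+}$ removes the singularity, as you yourself observe later.
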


\begin{proof}
For $k=0,1,2,3$, differentiation under the expectation is justified by the
polynomial-growth assumption on $H_{\infty,+}^{(k)}$ and the fact that lognormal
random variables have moments of every order, uniformly for $s\in[0,S]$.
Thus
\begin{equation}\label{eq:semigroup-derivatives}
 \partial_p^k\mathscr H_+(s,p)
 =\mathbb E\!\left[Z_s^kH_{\infty,+}^{(k)}(pZ_s)\right].
\end{equation}
Write
\[
 A_{k,+}:=C_{h,+}q_+(q_+-1)\cdots(q_+-k+1),
 \qquad A_{0,+}:=C_{h,+},
\]
where the empty product is one.  For $k=3$ and $q_+=2$, this convention gives
$A_{3,+}=0$, consistently with \eqref{eq:smooth-finite-order}. We claim that,
uniformly in $s$,
\begin{equation}\label{eq:free-semigroup-asymptotics}
 p^{k-q_+}\partial_p^k\mathscr H_+(s,p)
 \longrightarrow A_{k,+}\mathbb E Z_s^{q_+}
 =A_{k,+}e^{q_+(q_+-1)s}.
\end{equation}
Fix $\delta>0$ so that $r^{k-q_+}H_{\infty,+}^{(k)}(r)$ is bounded on
$(0,\delta]$ and split \eqref{eq:semigroup-derivatives} according to
$\{pZ_s\le\delta\}$ and its complement. On the first event the normalized
integrand is
\[
 Z_s^{q_+}(pZ_s)^{k-q_+}H_{\infty,+}^{(k)}(pZ_s),
\]
and dominated convergence applies. To see uniformity, first restrict to
$Z_s\le K$, where convergence at zero is uniform, and then let $K\to\infty$
using uniform lognormal moment bounds. On $\{pZ_s>\delta\}$, polynomial
growth and Markov's inequality with an arbitrarily high lognormal moment show
that the normalized expectation is $O(p^M)$ for any prescribed $M>0$,
uniformly in $s$. This proves \eqref{eq:free-semigroup-asymptotics}.
Proposition~\ref{prop:flat-correction} now transfers the four estimates from
$\mathscr H_+$ to $\mathcal W$ and proves \eqref{eq:W-general-power}.

Since
\[
C_{h,+}q_+(q_+-1)e^{q_+(q_+-1)s}>0,
\]
the case $k=2$ of \eqref{eq:W-general-power} gives
\eqref{eq:dual-second-derivative-seed}.  Moreover,
\[
p\,\mathcal W_{pp}(s,p)
=
C_{h,+}q_+(q_+-1)e^{q_+(q_+-1)s}p^{q_+-1}
+o(p^{q_+-1}),
\]
uniformly in $s$.  Since $q_+>1$, this proves
\eqref{eq:weighted-dual-second-derivative-near-zero}.
\end{proof}

The same argument applies to the left-side dual problem with $+$ replaced
by $-$.

\subsection{Strict convexity}
\label{subsec:strict-convexity}

Convexity follows from the stopping construction. Assumption~\ref{ass:finite-order}
and Proposition~\ref{prop:asymptotics_p0} provide strict positivity of
$\mathcal W_{pp}$ in a fixed neighborhood of $p=0$, uniformly in time.  The strong
minimum principle then extends strict positivity throughout the continuation
region.  No second-order trace at the free boundary is required.

\begin{Corollary}[Strict convexity]
	\label{cor:strict-convexity}
	Under the hypotheses of
	Proposition~\ref{prop:asymptotics_p0}, for every $s\in[0,S]$,
	\begin{equation}
		\label{eq:strict-convexity-interior}
		\mathcal W_{pp}(s,p)>0,
		\qquad 0<p<\beta(s).
	\end{equation}
\end{Corollary}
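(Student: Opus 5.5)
Throughout, write $\mathcal C^\circ:=\{(s,p):0<s\le S,\ 0<p<\beta(s)\}$. The plan is to set $\phi:=\mathcal W_{pp}$ on $\mathcal C^\circ$ and show that it solves a linear parabolic equation there. Convexity from Proposition~\ref{prop:stopping-boundary} gives $\phi\ge0$, and Proposition~\ref{prop:asymptotics_p0} gives $\phi>0$ near $p=0$. A strong minimum principle should then rule out interior zeros.

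First, I would record interior regularity. On $\mathcal C$, the value $\mathcal W$ solves $\mathcal W_s-p^2\mathcal W_{pp}=0$, and for $p>0$ this operator is uniformly parabolic with smooth coefficients. Interior Schauder theory, or hypoellipticity of the constant-coefficient heat operator in the variable $z=\log p$, makes $\mathcal W$ smooth in the open set $\mathcal C\cap\{p>0,\ s>0\}$. Differentiating the equation twice in $p$ gives
\[
\phi_s-p^2\phi_{pp}-4p\phi_p-2\phi=0 .
\]
The substitution $\tilde\phi=e^{-2s}\phi$ removes the zeroth-order term:
\[
\tilde\phi_s-p^2\tilde\phi_{pp}-4p\tilde\phi_p=0 .
\]
This equation has no zeroth-order term, so the strong minimum principle applies to $\tilde\phi\ge0$ without any sign condition. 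Since $\tilde\phi$ and $\phi$ have the same zero set, it suffices to work with $\tilde\phi$.

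Next, I would argue by contradiction. Suppose $\phi(s_0,p_0)=0$ for some $0<s_0\le S$ and $0<p_0<\beta(s_0)$. Then $\tilde\phi$ attains its minimum value $0$ at an interior point of $\mathcal C$, where interior means relative to the parabolic structure, with $s_0$ playing the role of the latest time. The strong minimum principle (Nirenberg's form) says $\tilde\phi\equiv0$ on every point of $\mathcal C^\circ$ that can be joined to $(s_0,p_0)$ by a path along which $s$ is nondecreasing.

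In particular, the horizontal segment $\{s_0\}\times(0,p_0]$ lies in $\mathcal C$, because $\mathcal C_{s_0}=[0,\beta(s_0))$ is an interval. Hence $\phi(s_0,p)=0$ for all $p\in(0,p_0]$, which contradicts $\phi(s_0,p)>0$ for $0<p\le p_*$ from \eqref{eq:dual-second-derivative-seed}.

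The only genuine care needed is to apply the strong minimum principle on the connected component of $\mathcal C^\circ\cap\{s\le s_0\}$ containing $(s_0,p_0)$. Because $\beta$ is only lower semicontinuous, this set need not be a nice domain. However, the version of the strong minimum principle I intend to use needs only a small open rectangle $(s_0-\varepsilon,s_0]\times(a,b)$ around each point of the horizontal segment. Openness of $\mathcal C$ in the relative topology of $(0,S]\times(0,\infty)$, which follows from continuity of $\mathcal W-\Phi_+$, supplies such rectangles. Propagating zeros along the segment then needs only the local horizontal-spreading part of the principle, applied at each point of the segment. This is the step I expect to require the most checking.

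It remains to treat $s=0$. There, $\mathcal W(0,p)=H_+(p)$ on $0<p<\beta(0)=h_+'(R_+(0))$, and $H_+''(p)=1/h_+''((h_+')^{-1}(p))>0$ because $(h_+')^{-1}(p)\in(0,R_+(0))$ and $h_+''>0$ away from zero. So $\phi(0,p)>0$ directly from conjugacy, although $\phi$ at $s=0$ should be interpreted as the derivative of the initial datum. This completes \eqref{eq:strict-convexity-interior}.
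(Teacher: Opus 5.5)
Your proposal is correct and follows the paper's proof essentially step for step. It uses the same gauge $e^{-2s}\mathcal W_{pp}$ satisfying $\zeta_s-p^2\zeta_{pp}-4p\zeta_p=0$, the strong minimum principle on backward cylinders inside the relatively open continuation region, a contradiction with the positivity near $p=0$ supplied by Proposition~\ref{prop:asymptotics_p0}, and the direct conjugacy argument $H_+''>0$ at $s=0$. The only cosmetic difference is in how the cylinder is obtained: the paper uses lower semicontinuity of $\beta$ and compactness to fit a single cylinder $[s_*-\delta,s_*]\times[\varepsilon,p_*+\rho]\subset\mathcal C$ over the whole segment, whereas you chain local rectangles along the segment with an open--closed argument.
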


\begin{proof}
	At $s=0$, we have $\mathcal W(0,\cdot)=H_+$ and $H_+''>0$ on
	$(0,h_+'(R_+(0)))$. For $s>0$, the convexity established in
	Proposition~\ref{prop:stopping-boundary}, together with interior regularity,
	gives $\mathcal W_{pp}\ge0$ in $\mathcal C$. Set
	\[
	\zeta(s,p):=e^{-2s}\mathcal W_{pp}(s,p).
	\]
	Differentiating $\mathcal W_s-p^2\mathcal W_{pp}=0$ twice with respect to $p$ gives
	\begin{equation}
		\label{eq:zeta-equation}
		\zeta_s-p^2\zeta_{pp}-4p\zeta_p=0
		\qquad\text{in }\mathcal C.
	\end{equation}

	Proposition~\ref{prop:asymptotics_p0} shows that $\zeta>0$ in a fixed
	neighborhood of $p=0$, uniformly in $s$. If $\zeta(s_*,p_*)=0$ at some
	$(s_*,p_*)\in\mathcal C$ with $s_*>0$, openness and the threshold structure
	of $\mathcal C$ provide $\delta,\rho>0$ and a sufficiently small
	$\varepsilon>0$ such that
	\[
	[s_*-\delta,s_*]\times[\varepsilon,p_*+\rho]\subset\mathcal C
	\quad\text{and}\quad
	\zeta(s,\varepsilon)>0
	\quad(s_* -\delta\le s\le s_*).
	\]
	Equation~\eqref{eq:zeta-equation} is uniformly parabolic on this cylinder,
	so the strong minimum principle contradicts $\zeta(s_*,p_*)=0$. Thus
	$\zeta>0$ throughout $\mathcal C$, proving
	\eqref{eq:strict-convexity-interior}. The same one-sided argument applies at
	$s_*=S$.
\end{proof}

\begin{Lemma}[Uniform bound for $p\mathcal W_{pp}$]
	\label{lem:uniform-weighted-dual-second-derivative}
	Suppose that Assumptions~\ref{ass:coefficients},
	\ref{ass:dual-comparison}, and~\ref{ass:finite-order} hold. Then
	\begin{equation}
		\label{eq:uniform-weighted-dual-second-derivative}
		\sup_{\substack{0\le s\le S\\0<p<\beta(s)}}
		p\,\mathcal W_{pp}(s,p)<\infty.
	\end{equation}
	The same conclusion holds for the dual problem obtained from the reflected
	left-hand data.
\end{Lemma}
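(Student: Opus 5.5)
Near $p=0$ the bound is already given by \eqref{eq:weighted-dual-second-derivative-near-zero} of Proposition~\ref{prop:asymptotics_p0}. It therefore suffices to bound $p\,\mathcal W_{pp}$ on the set $\{0\le s\le S,\ p_*\le p<\beta(s)\}$. There $p$ lies between $p_*$ and $c^*$ by \eqref{eq:beta-uniform-bounds}, so the task reduces to a uniform bound for $\mathcal W_{pp}$ away from $p=0$, up to the free boundary and up to $s=0$.

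\textbf{Convexity and slope control.} The plan exploits convexity rather than second-order boundary regularity. By Proposition~\ref{prop:stopping-boundary}, $\mathcal W(s,\cdot)$ is convex and nondecreasing. The free-boundary condition gives $\mathcal W_p(s,\beta(s)-)=R_+(s)$, and $\mathcal W_p\le R_+(s)$ on $[0,\beta(s))$ follows from $\mathcal W\le R_+ p$ together with convexity. Hence $0\le\mathcal W_p\le\max R_+$ throughout the continuation region. This controls only first derivatives, so a second step is needed.

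\textbf{Time derivative via the equation.} Since $\mathcal W_s=p^2\mathcal W_{pp}$ in $\mathcal C$, it suffices to bound $\mathcal W_s$ from above on $\mathcal C\cap\{p\ge p_*\}$; this gives $p\,\mathcal W_{pp}=\mathcal W_s/p\le \mathcal W_s/p_*$. I would obtain the upper bound from a Lipschitz-in-$s$ estimate for the stopping value $\mathcal W$. For $s'<s$, compare a stopping time $\tau$ optimal for horizon $s$ with the truncation $\tau\wedge s'$ at horizon $s'$. The difference in rewards is controlled by $|\Phi_+(s-\tau,\cdot)-\Phi_+(s'-\tau,\cdot)|\le (s-s')(p\|R_+'\|_\infty+\|Q_+'\|_\infty)$, which uses $R_+,Q_+\in C^1$. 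It is also controlled by the gap $\mathbb E[\mathcal W(s-s',P)-H_+(P)]$ on $\{\tau\ge s'\}$. For that gap I would use the comparison $\mathcal W(r,\cdot)-H_+\le \mathscr H_+(r,\cdot)-H_{\infty,+}+C r$, where $\mathscr H_+$ is smooth with $(\mathscr H_+)_s=p^2(\mathscr H_+)_{pp}$ bounded on compacts. This gives $\mathcal W_s\le C(1+p)$, and hence the desired bound.

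\textbf{Main obstacle.} The delicate point is the short-time behaviour near $s=0$ at $p$ close to $h_+'(R_+(0))$, where $H_+$ has a kink: $H_+''$ jumps to $0$, but $H_+$ is only Lipschitz-convex, not $C^2$. An alternative route avoids this. Use $\zeta=e^{-2s}\mathcal W_{pp}\ge0$, which satisfies \eqref{eq:zeta-equation}, and bound it by integrating the slope: $\int_{p_*}^{\beta(s)}\mathcal W_{pp}\,dp\le R_+(s)$. Combined with the parabolic interior $L^\infty$–$L^1$ (Harnack-type) estimate for the nonnegative solution $\zeta$ of the uniformly parabolic equation \eqref{eq:zeta-equation} on $[p_*/2,c^*]$, this yields local bounds. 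Near the free boundary I would still need the monotone-in-$s$ information implied by Assumption~\ref{ass:dual-comparison}, namely $\mathcal W_s\ge0$ together with an upper bound on $\mathcal W_s$ obtained from $\Phi_{+,s}$ at stopping. I expect this near-boundary upper bound on $\mathcal W_s$ to be the hardest step. The left-hand problem is identical after reflection.
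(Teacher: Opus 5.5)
Your first route is viable, and once repaired it gives a genuinely different and more elementary proof than the paper's. That route bounds $\mathcal W(s,p)-\mathcal W(s',p)$ from above and then uses $p\mathcal W_{pp}=\mathcal W_s/p$ in $\mathcal C$. As written, however, its key step is not established, and it fails exactly at the corner you flag. The argument needs
\[
\mathbb E\bigl[\bigl(\mathcal W(s-s',P^p_{s'})-H_+(P^p_{s'})\bigr)\mathbf 1_{\{\tau\ge s'\}}\bigr]\le C(1+p^N)(s-s'),
\]
and $P^p_{s'}$ charges all of $(0,\infty)$. The only comparison you cite, $\mathcal W(r,\cdot)\le\mathscr H_+(r,\cdot)+Cr$, has initial trace $H_{\infty,+}$. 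It therefore gives your inequality $\mathcal W(r,\cdot)-H_+\le\mathscr H_+(r,\cdot)-H_{\infty,+}+Cr$ only where $H_+=H_{\infty,+}$, i.e.\ for $q\le h_+'(R_+(0))$. Beyond that point it leaves the extra term $H_{\infty,+}(q)-H_+(q)>0$, which does not vanish as $r\downarrow0$. Instead of closing this, you switch to the $\zeta$/Harnack route, and that route cannot work near the free boundary. Interior $L^\infty$--$L^1$ estimates need cylinders inside $\mathcal C$, and nothing is known about the regularity of $\beta$. This is why you are left with an unresolved ``near-boundary'' bound on $\mathcal W_s$.

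The missing ingredient is already in Appendix~\ref{app:one-obstacle-boundary}: compare with the full upper profile $\overline{\mathcal W}$, not with $\mathscr H_+$ alone.
\begin{itemize}
\item Its affine branch makes $\overline{\mathcal W}(0,\cdot)=H_+$ exactly, by terminal-corner compatibility. Note that $H_+$ has no kink: it is $C^{1,1}$, and only $H_+''$ jumps.
\item It satisfies $\overline{\mathcal W}\ge\Phi_+$, and $\overline{\mathcal W}(s-\theta,P^p_\theta)$ is a supermartingale.
\item Its $s$-derivative is $(\mathscr H_+)_s(s,p)+\overline p_+R_+'-Q_+'-(\mathscr H_+)_s(s,\overline p_+)$ on the curved branch and $pR_+'-Q_+'$ on the affine one. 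Both have polynomial growth, and the branches agree on $p=\overline p_+(s)$, so $|\overline{\mathcal W}(r,q)-\overline{\mathcal W}(0,q)|\le C(1+q^N)r$.
\end{itemize}
On $\{\tau\ge s'\}$, optional sampling bounds the conditional reward by $\overline{\mathcal W}(s-s',P^p_{s'})$, so the gap term is at most $C(1+p^N)(s-s')$. Hence $\mathcal W(s,p)-\mathcal W(s',p)\le C(1+p^N)(s-s')$ for all $s'<s$. This gives $\mathcal W_s\le C(1+p^N)$ at every point of $\mathcal C$, right up to the free boundary, so $p\mathcal W_{pp}$ is bounded on $\{p\ge p_*\}$. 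The slice $s=0$ is explicit from $H_+''=1/h_+''\circ(h_+')^{-1}$. No separate boundary step is needed, and neither is your slope-control paragraph.

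Compared with the paper, the trade-off is as follows. The paper proves a two-sided $W^{2,1}_\infty$ bound for the log-transformed gap $\widetilde U$, using penalization, the Andersson--Lindgren--Shahgholian theorem, and an extension to negative times at the corner $(0,\log h_+'(R_+(0)))$. Your repaired argument yields only an upper bound on $\mathcal W_{pp}$. That suffices because $\mathcal W_{pp}\ge0$, and it avoids obstacle-problem regularity theory entirely. The paper's route gives more: regularity of the gap across the free boundary.
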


The proof is given in
Appendix~\ref{app:uniform-weighted-dual-second-derivative}.

\section{Global classical solution and optimality}
\label{sec:global}

The strict convexity obtained in
Corollary~\ref{cor:strict-convexity} allows us to invert the dual
transformation. We first reconstruct two auxiliary branches and paste them at
$x=0$ to obtain a global classical solution of
\eqref{eq:HJB-boundary-problem}. We then verify separately that
this solution is the value function \eqref{eq:value}.

\subsection{Construction of a global classical solution}
\label{subsec:regional-reconstruction}

Recall the time change $s=s(t)$ introduced in
\eqref{eq:time-change}. Since $\kappa_t>0$, it maps $[0,T]$
bijectively onto $[0,S]$ with reversed orientation. Throughout this section,
$t=t(s)$ denotes its inverse.

For $0\le s\le S$ and $0\le x\le r(t(s))$, define
\begin{equation}
	\label{eq:vhat-plus}
	\widehat v^+(s,x)
	:=
	\sup_{0\le p\le \beta(s)}\{px-\mathcal W(s,p)\}.
\end{equation}

Let $\widehat v^-$ denote the analogous inverse conjugate for the reflected
problem with positive state variable $y=-x$, upper boundary $-\ell(t)$, and
boundary cost $g_\ell(t)$.

Returning directly to the original time and state variables, define the global
candidate by
\begin{equation}
	\label{eq:global-candidate}
	\widehat v(t,x):=
	\begin{cases}
		\widehat v^-\bigl(s(t),-x\bigr),&\ell(t)\le x<0,\\
		0,&x=0,\\
		\widehat v^+\bigl(s(t),x\bigr),&0<x\le r(t).
	\end{cases}
\end{equation}
Here $\widehat v^\pm$ are expressed in the backward time variable $s$, whereas
the global function $\widehat v$ is expressed in the original variables
$(t,x)$.

In the following statement, $0^+$ and $0^-$ denote the one-sided traces
from the right and left, respectively.

\begin{Theorem}[Construction of a global classical solution]
	\label{thm:regional-reconstruction}
	Suppose that Assumptions~\ref{ass:coefficients},
	\ref{ass:dual-comparison}, and~\ref{ass:finite-order} hold.
	Then the function $\widehat v$ in \eqref{eq:global-candidate} satisfies
	\[
		\widehat v\in C^{1,2}(\mathcal D^\circ)\cap C(\mathcal D),
		\qquad
		\widehat v_{xx}>0\quad\text{when }x\ne0,
	\]
	and solves \eqref{eq:HJB-boundary-problem} classically. At
	$x=0$, for $0\le t<T$,
	\begin{equation}\label{eq:global-interface-traces}
		\widehat v(t,0)=\widehat v_x(t,0)=\widehat v_t(t,0)=0,
		\qquad
		\widehat v_{xx}(t,0)=
		\begin{cases}
			h''(0)e^{-2s(t)},&h''(0)>0,\\
			0,&h''(0)=0.
		\end{cases}
	\end{equation}

Moreover, for $k=0,1,2$,
\begin{equation}\label{eq:primal-endpoint-asymptotics}
\partial_x^k\widehat v(t,x)
=\partial_x^k\left[
\frac{c_\pm}{m_\pm}e^{-q_\pm s(t)}|x|^{m_\pm}
\right]+o(|x|^{m_\pm-k}),
\qquad x\to0^\pm,
\end{equation}
locally uniformly for $t\in[0,T)$.

	The pointwise Hamiltonian minimizer, continuously extended at $x=0$, is
	\begin{equation}
		\label{eq:global-feedback}
		\widehat u(t,x)
		:=
		\begin{cases}
		-\dfrac{b_t\widehat v_x(t,x)}
		{\sigma_t^2\widehat v_{xx}(t,x)},&x\ne0,\\[3mm]
		0,&x=0
		\end{cases}
		\qquad (t,x)\in\mathcal D^\circ.
	\end{equation}
	It is bounded and locally Lipschitz in $x$ on
	$\mathcal D^\circ$, and
	\begin{equation}
		\label{eq:global-feedback-interface-regularity}
		\widehat u(t,x)
		=-\frac{b_t}{\sigma_t^2(m_\pm-1)}x+o(|x|),
		\qquad x\to0^\pm,
	\end{equation}
	where the remainder is locally uniform for $t\in[0,T)$.
\end{Theorem}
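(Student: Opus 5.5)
The plan is to reconstruct each branch by inverse Legendre transform of $\mathcal W(s,\cdot)$ on $[0,\beta(s)]$, verify the one-sided equation \eqref{eq:right-transformed-problem}, and then paste the branches at $x=0$ using the asymptotics of Proposition~\ref{prop:asymptotics_p0}. All steps are carried out for the right branch; the left branch follows by reflection.

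\textbf{Inversion on the right branch.} By Corollary~\ref{cor:strict-convexity}, for each $s$ the map $p\mapsto\mathcal W_p(s,p)$ is strictly increasing and $C^2$ on $(0,\beta(s))$. Proposition~\ref{prop:asymptotics_p0} gives $\mathcal W_p(s,0^+)=0$. Proposition~\ref{prop:stopping-boundary} gives the smooth-fit value $\mathcal W_p(s,\beta(s)^-)=R_+(s)$. Hence $\mathcal W_p(s,\cdot)$ maps $(0,\beta(s))$ bijectively onto $(0,R_+(s))$. Let $P(s,x)$ be its inverse. The supremum in \eqref{eq:vhat-plus} is then attained at $p=P(s,x)$, and we obtain
\[
\widehat v^+_x=P,\qquad \widehat v^+_{xx}=1/\mathcal W_{pp}(s,P),\qquad \widehat v^+_s=-\mathcal W_s(s,P).
\]
Joint $C^1$ regularity of $P$ in $(s,x)$ follows from the implicit function theorem, using $\mathcal W_{pp}>0$ and interior smoothness of $\mathcal W$ in $\mathcal C$. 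The dual equation $\mathcal W_s=p^2\mathcal W_{pp}$ becomes $\widehat v^+_s=-(\widehat v^+_x)^2/\widehat v^+_{xx}$. Returning to $t$ via $ds/dt=-\kappa_t$ gives \eqref{eq:reduced-HJB}. Since $\widehat v_{xx}>0$, this is equivalent to the HJB equation in \eqref{eq:HJB-boundary-problem}, with minimizer \eqref{eq:global-feedback}.

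The boundary data come from Proposition~\ref{prop:stopping-boundary}:
\begin{itemize}
\item At $x=R_+(s)$ the maximizer is $\beta(s)$, so $\widehat v^+=\beta R_+-\mathcal W(s,\beta)=Q_+$.
\item At $s=0$ we have $\mathcal W(0,\cdot)=H_+$, and biconjugation of the convex $h_+$ on $[0,R_+(0)]$ returns $h_+$.
\item At $x=0$ we have $\widehat v^+=-\inf\mathcal W=0$.
\end{itemize}
Continuity on $\mathcal D$ follows from continuity of $\mathcal W$ and $\beta$ together with \eqref{eq:beta-initial-limit}. The main care is needed near the corner $s=0$ and at the lateral boundary, where $\beta$ is only lower semicontinuous. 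There I would use the uniform bounds \eqref{eq:beta-uniform-bounds} and the sup formula directly.

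\textbf{Asymptotics and pasting at $x=0$.} Set $a(s)=C_{h,+}q_+e^{q_+(q_+-1)s}$. By \eqref{eq:W-general-power} with $k=1$, $\mathcal W_p=a(s)p^{q_+-1}(1+o(1))$ uniformly in $s$. Inverting gives $P(s,x)=(x/a)^{m_+-1}(1+o(1))$, because $1/(q_+-1)=m_+-1$. One checks that $a^{-(m_+-1)}=c_+e^{-q_+s}$: indeed $(m_+-1)q_+(q_+-1)=q_+$, and $(C_{h,+}q_+)^{-(m_+-1)}=c_+$. This yields the case $k=1$ of \eqref{eq:primal-endpoint-asymptotics}. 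Integrating from $0$ gives $k=0$.

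For $k=2$, use $\widehat v_{xx}=1/\mathcal W_{pp}(s,P)$ together with the $k=2$ case of \eqref{eq:W-general-power}. When $m_+=2$ this gives the limit $c_+e^{-2s}=h''(0)e^{-2s}$. When $m_+>2$ it gives $\widehat v_{xx}\to0$. The same formulas hold on the left with $c_-,m_-$; if $h''(0)>0$ then $c_+=c_-$ and $m_\pm=2$, so the one-sided limits of $\widehat v_{xx}$ agree. The one-sided limits of $\widehat v$ and $\widehat v_x$ both vanish, and an elementary lemma (one-sided $C^1$ functions whose derivatives extend continuously across the point) gives $C^2$ in $x$ across $x=0$ with the traces \eqref{eq:global-interface-traces}. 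For $\widehat v_t$, the equation gives $\widehat v_t=\kappa\,\widehat v_x^2/\widehat v_{xx}=O(|x|^{m_\pm})\to0$, and with $\widehat v(t,0)\equiv0$ this makes $\widehat v_t$ continuous and zero at $x=0$. Hence $\widehat v\in C^{1,2}(\mathcal D^\circ)$. On $x=0$ the HJB equation holds because $V_t=V_x=0$ and $V_{xx}\ge0$, so the infimum equals $0$.

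\textbf{Feedback.} On the right branch the ratio is
\[
\frac{\widehat v_x}{\widehat v_{xx}}=P\,\mathcal W_{pp}(s,P).
\]
The asymptotics give $\widehat v_x/\widehat v_{xx}=x/(m_\pm-1)+o(|x|)$, which is \eqref{eq:global-feedback-interface-regularity}. Boundedness of $\widehat u$ is exactly Lemma~\ref{lem:uniform-weighted-dual-second-derivative}.

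Local Lipschitz continuity away from $x=0$ requires $\partial_x(P\mathcal W_{pp}(s,P))=(\mathcal W_{pp}+P\mathcal W_{ppp})/\mathcal W_{pp}$ to be locally bounded. This holds because $\mathcal W$ is smooth in the open continuation region. Near $0$, the $k=3$ case of \eqref{eq:W-general-power} shows $P\mathcal W_{ppp}/\mathcal W_{pp}\to q_+-2$, so the $x$-derivative of the ratio stays bounded on both sides. Together with continuity at $0$, this gives Lipschitz continuity across the interface.

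I expect the main obstacle to be this uniform-in-$t$ control near $x=0$: it is the $k=3$ step, together with making the asymptotic inversion of $P$ uniform in $s$. Both rest on the uniform estimates of Proposition~\ref{prop:flat-correction}.
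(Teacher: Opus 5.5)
Your proposal is correct and follows essentially the same route as the paper: inverse Legendre reconstruction on $(0,\beta(s))$ with $\widehat v_x=p$, $\widehat v_{xx}=1/\mathcal W_{pp}$, $\widehat v_s=-p^2\mathcal W_{pp}$; the $k=1,2,3$ cases of \eqref{eq:W-general-power} for the endpoint asymptotics and the pasting of traces at $x=0$; Lemma~\ref{lem:uniform-weighted-dual-second-derivative} for boundedness of the feedback; and $\partial_x[p\mathcal W_{pp}]=1+p\mathcal W_{ppp}/\mathcal W_{pp}\to 1/(m_\pm-1)$ for the Lipschitz bound. The one step you leave vague is continuity up to the boundary. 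Your phrase ``continuity of $\mathcal W$ and $\beta$'' is inaccurate, since $\beta$ is only lower semicontinuous. The paper handles this step exactly as you suggest: it replaces the supremum over $[0,\beta(s)]$ by the supremum over the fixed interval $[0,c^*]$. This is legitimate because, for $p\ge\beta(s)$ and $x\le R_+(s)$, the bracket equals $p(x-R_+(s))+Q_+(s)$ and is nonincreasing in $p$.
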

\begin{proof}
We give the proof for the right one-sided problem.  The left-hand result
	follows by applying the same argument to $R_-$, $Q_-$, and $h_-$.
	The exponents $m_\pm$ and coefficients $c_\pm$ come from
	Assumption~\ref{ass:finite-order}. In particular, on the right-hand side,
	$m_+\ge2$ is the order at which the terminal loss leaves its minimum at
	zero, in the sense that
	\[
		h_+'(x)\sim c_+x^{m_+-1}\qquad(x\downarrow0),
	\]
	by \eqref{eq:h-finite-order}, while $q_+=m_+/(m_+-1)$ is defined in
	\eqref{eq:q-and-Ch}.
	
	Let $0<c_*\le \beta(s)\le c^*$ be the bounds in
	\eqref{eq:beta-uniform-bounds}.  Since
	$\mathcal{W}(s,p)=pR_+(s)-Q_+(s)$ for $p\ge \beta(s)$, one has, for
	$0\le x\le R_+(s)$,
	\begin{equation}\label{eq:vhat-fixed-dual-domain}
		\widehat v^+(s,x)
		=\sup_{0\le p\le c^*}\{px-\mathcal{W}(s,p)\}.
	\end{equation}
	Indeed, for $p\ge \beta(s)$ the expression in braces equals
	$p(x-R_+(s))+Q_+(s)$ and is nonincreasing in $p$.  The fixed-domain
	representation and the continuity of $\mathcal{W}$ show that
$\widehat v^+$ is continuous on the closed one-sided domain.
	
	By Corollary~\ref{cor:strict-convexity}, smooth fit, and
	Proposition~\ref{prop:asymptotics_p0},
	\[
	\mathcal W_p(s,0)=0,\qquad
	\mathcal W_p(s,\beta(s)^-)=R_+(s),\qquad
	\mathcal W_{pp}(s,p)>0\quad(0<p<\beta(s)).
	\]
	Thus, for $0<x<R_+(s)$, the supremum in
	\eqref{eq:vhat-fixed-dual-domain} is attained at the unique
	$p=p(s,x)\in(0,\beta(s))$ satisfying
	\begin{equation}\label{eq:inverse-dual-relation}
		\mathcal W_p(s,p(s,x))=x.
	\end{equation}
	The implicit function theorem and the dual equation give
	\begin{equation}\label{eq:inverse-legendre-identities}
		\widehat v_x^+=p,\qquad
		\widehat v_{xx}^+=\frac1{\mathcal W_{pp}},\qquad
		\widehat v_s^+=-\mathcal W_s=-p^2\mathcal W_{pp},
	\end{equation}
	where all dual quantities are evaluated at $(s,p(s,x))$.  Consequently,
	\[
	\widehat v_s^+
	+\frac{(\widehat v_x^+)^2}{\widehat v_{xx}^+}=0.
	\]
	By \eqref{eq:global-candidate},
	$\widehat v(t,x)=\widehat v^+(s(t),x)$ for $x>0$. Since
	$ds/dt=-\kappa_t$, the preceding equation is equivalent to the restriction
	of the HJB equation in \eqref{eq:HJB-boundary-problem} to
	$\mathcal D_+^\circ$, and $\widehat v_{xx}>0$ there.
	
We now derive the behavior near $x=0$ after returning to the state variable.
Set
	\[
	K_+(s):=q_+C_{h,+}e^{q_+(q_+-1)s}
	=c_+^{-1/(m_+-1)}e^{q_+(q_+-1)s}.
	\]
	The case $k=1$ of \eqref{eq:W-general-power} gives, uniformly in
	$s\in[0,S]$,
	\[
	x=\mathcal W_p(s,p)=K_+(s)p^{q_+-1}(1+o(1)).
	\]
	Since $K_+$ is continuous and bounded away from zero, strict monotonicity
	permits uniform inversion:
	\begin{equation}\label{eq:inverse-endpoint-p}
		p(s,x)=c_+e^{-q_+s}x^{m_+-1}(1+o(1)),
		\qquad x\downarrow0.
	\end{equation}
	Moreover, \eqref{eq:W-general-power} yields
	\[
	\frac{p\mathcal W_{pp}(s,p)}{\mathcal W_p(s,p)}
	\longrightarrow q_+-1=\frac1{m_+-1},
	\qquad
	\frac{\mathcal W(s,p)}{p\mathcal W_p(s,p)}
	\longrightarrow\frac1{q_+}.
	\]
	Recall from the original definition \eqref{eq:vhat-plus} that
	\[
		\widehat v^+(s,x)
		=\sup_{0\le p\le\beta(s)}\{px-\mathcal W(s,p)\}.
	\]
	At the interior maximizer this becomes
	$\widehat v^+=px-\mathcal W$, with $x=\mathcal W_p(s,p)$. Hence
	\eqref{eq:inverse-legendre-identities} gives
\eqref{eq:primal-endpoint-asymptotics} after the time change $s=s(t)$.
It also yields, uniformly for $s\in[0,S]$,
\[
\frac{\widehat v_x^+(s,x)}{\widehat v_{xx}^+(s,x)}
\sim\frac{x}{m_+-1},\qquad x\downarrow0.
\]
	For $0<s\le S$, the last identity in
	\eqref{eq:inverse-legendre-identities} also gives
	\[
	\widehat v_s^+(s,x)
	\sim-\frac{c_+}{m_+-1}e^{-q_+s}x^{m_+}.
	\]
	
	The terminal condition follows from the conjugacy of $H_+$ and $h_+$:
	$\widehat v^+(0,x)=h_+(x)$.  At $x=0$ the maximizer is $p=0$, whereas
value matching at the moving state boundary gives
	\[
	\widehat v^+(s,R_+(s))
	=\beta(s)R_+(s)-\mathcal W(s,\beta(s))=Q_+(s).
	\]
	By \eqref{eq:global-candidate}, $\widehat v(t,x)=\widehat
	v^+(s(t),x)$ for $x>0$. Using $R_+(s(t))=r(t)$ and
	$Q_+(s(t))=g_r(t)$, we conclude that
	\[
		\widehat v(t,0+)=0,\qquad
		\widehat v(t,r(t))=g_r(t),\qquad
		\widehat v(T,x)=h(x),\quad 0<x\le r(T).
	\]
	The preceding asymptotics near $x=0$ imply
	\[
	\widehat v(t,0+)
	=\widehat v_x(t,0+)
	=\widehat v_t(t,0+)=0.
	\]
	They also give
	\[
	\widehat v_{xx}(t,0+)
	=
	\begin{cases}
		h''(0)e^{-2s(t)},&m_+=2,\\
		0,&m_+>2,
	\end{cases}
	\]
	where in the first case $c_+=h''(0)$.
	
	The right-hand Hamiltonian minimizer can be written in dual variables as
	\begin{equation}\label{eq:right-feedback-dual-form}
		\widehat u^+(t,x)
		=-\frac{b_t}{\sigma_t^2}
		p(s(t),x)\mathcal W_{pp}\bigl(s(t),p(s(t),x)\bigr).
	\end{equation}
	The inverse $p(s,x)$ is smooth locally in the open primal domain.
	Consequently, $\widehat u^+$ is Borel measurable and locally
	Lipschitz in $x$ on compact subsets of $\mathcal D_+^\circ$.
	Lemma~\ref{lem:uniform-weighted-dual-second-derivative} gives
	\begin{equation}\label{eq:right-feedback-bound}
		|\widehat u^+(t,x)|
		\le
		\left\|\frac b{\sigma^2}\right\|_\infty
		\sup_{\substack{0\le s\le S\\0<p<\beta(s)}}
		p\,\mathcal W_{pp}(s,p)
		=:C_u<\infty.
	\end{equation}

	Applying the same argument to the reflected data gives the left branch,
	including its HJB equation, terminal and lateral data, and endpoint
	asymptotics. Thus the two branches have matching zero-order, first spatial,
	and time traces at $x=0$. If $h''(0)>0$, the $C^2$ regularity of $h$ forces
	$m_+=m_-=2$ and $c_+=c_-=h''(0)$; if $h''(0)=0$, both second-order traces
	vanish. Hence they paste into
	\[
		\widehat v\in C^{1,2}(\mathcal D^\circ)\cap C(\mathcal D),
		\qquad
		\widehat v_{xx}>0\quad(x\ne0),
	\]
	and \eqref{eq:global-interface-traces} holds.

	The HJB equation is already satisfied away from zero. If $h''(0)>0$, its
	Hamiltonian at $x=0$ uniquely selects $u=0$. If $h''(0)=0$, then
	$\widehat v_t=\widehat v_x=\widehat v_{xx}=0$ at zero, so the original
	Hamiltonian in the HJB equation in \eqref{eq:HJB-boundary-problem} vanishes
	there for every pointwise $u$. Thus $\widehat v$ satisfies the HJB equation
	at zero as well, and all the boundary--terminal data in
	\eqref{eq:HJB-boundary-problem} hold.

	The reflected version of \eqref{eq:right-feedback-bound} and the definition
	\eqref{eq:global-feedback} show that $\widehat u$ is bounded. Moreover, the
	first- and second-derivative asymptotics in
	\eqref{eq:primal-endpoint-asymptotics} give
	\[
		\widehat u(t,x)
		=-\frac{b_t}{\sigma_t^2(m_+-1)}x+o(x),
		\qquad x\downarrow0,
	\]
	and the reflected problem gives the corresponding expansion as
	$x\uparrow0$. Finally,
	\[
		\partial_x\bigl[p\mathcal W_{pp}(s,p)\bigr]
		=1+\frac{p\mathcal W_{ppp}(s,p)}{\mathcal W_{pp}(s,p)}
		\longrightarrow\frac1{m_+-1}.
	\]
	The $k=2,3$ asymptotics in \eqref{eq:W-general-power} therefore give
	bounded one-sided derivatives of $\widehat u$ near zero. Together with
	interior regularity, this proves the asserted local Lipschitz property and
	\eqref{eq:global-feedback-interface-regularity}.
\end{proof}

\subsection{Verification and optimality}
\label{subsec:global-pasting}

\begin{Theorem}[Identification with the value function and optimality]
	\label{thm:global-optimality}
	Suppose that Assumptions~\ref{ass:coefficients},
	\ref{ass:dual-comparison}, and~\ref{ass:finite-order} hold.
	Then the classical solution $\widehat v$ constructed in
	Theorem~\ref{thm:regional-reconstruction} coincides with the value function
	$V$ in \eqref{eq:value}. The feedback $u^*:=\widehat u$ from
	\eqref{eq:global-feedback}, stopped when the state exits the moving domain,
	is admissible and optimal. Consequently, all regularity,
	zero-volatility-set, and
	feedback properties established in Theorem~\ref{thm:regional-reconstruction}
	hold for $V$ and $u^*$.
\end{Theorem}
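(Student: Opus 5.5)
The argument is a standard verification in two halves: a lower bound $\widehat v\le J(t,x;u)$ for every admissible $u$, and equality for the feedback $\widehat u$. On $\partial_p\mathcal D$ the identity $\widehat v=V$ holds because the exit time is immediate (or $t=T$) and $\widehat v$ equals the data there. So fix $(t,x)\in\mathcal D^\circ$.

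\textbf{Lower bound.} Take $u\in\mathcal U(t,x)$ with state $X$ and exit time $\tau$. The function $\widehat v$ is only $C^{1,2}$ in the open set $\mathcal D^\circ$, so Itô's formula is applied up to localizing times. Let $\tau_n$ be the first time $X$ comes within $1/n$ of $\ell(\cdot)$ or $r(\cdot)$, capped at $T-1/n$, and intersect with a truncation of $\int|u|^2$. On $[t,\tau_n]$ the derivatives $\widehat v_x,\widehat v_{xx},\widehat v_t$ are bounded, so the stochastic integral $\int \widehat v_x\sigma u\,dW$ is a true martingale since $\mathbb E\int|u|^2<\infty$. The HJB equation from Theorem~\ref{thm:regional-reconstruction} gives
\[
\widehat v_t+b u\widehat v_x+\tfrac12\sigma^2u^2\widehat v_{xx}\ge0
\]
pointwise for every $u\in\mathbb R$, including on $x=0$. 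Hence $\widehat v(t,x)\le\mathbb E\,\widehat v(\tau_n,X_{\tau_n})$. Letting $n\to\infty$, $\tau_n\to\tau$ by continuity of the paths and of $\ell,r$. Since $\widehat v\in C(\mathcal D)$ is bounded on the compact set $\mathcal D$, dominated convergence gives
\[
\widehat v(t,x)\le\mathbb E\,\widehat v(\tau,X_\tau).
\]
By the boundary and terminal identities, $\widehat v(\tau,X_\tau)$ equals $g$ on $\{\tau<T\}$ and $h(X_T)$ on $\{\tau=T\}$, including at the corners by compatibility. Therefore $\widehat v(t,x)\le J(t,x;u)$, and so $\widehat v\le V$.

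\textbf{Admissibility of $\widehat u$.} By Theorem~\ref{thm:regional-reconstruction}, $\widehat u$ is bounded and locally Lipschitz in $x$ on $\mathcal D^\circ$, with time dependence through the continuous $s(t)$ and the coefficients. The closed-loop SDE
\[
dX=b\widehat u\,ds+\sigma\widehat u\,dW
\]
therefore has locally Lipschitz bounded coefficients up to exit. Pathwise uniqueness and strong existence hold on each compact subdomain $\{\operatorname{dist}\ge1/n,\ s\le T-1/n\}$, and these solutions patch up to $\tau$. After $\tau$ we set the control to $0$. Boundedness of $\widehat u$ gives $\mathbb E\int|u^*|^2<\infty$.

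\textbf{Optimality.} For $u^*$ the Hamiltonian inequality is an equality: for $x\neq0$ because $\widehat u$ is the minimizer, and at $x=0$ because $\widehat u=0$ and, by \eqref{eq:global-interface-traces}, $\widehat v_t=\widehat v_x=0$ there. Repeating the localization argument gives $\widehat v(t,x)=J(t,x;u^*)\ge V(t,x)$. Hence $\widehat v=V$ and $u^*$ is optimal, and the remaining properties transfer from Theorem~\ref{thm:regional-reconstruction}.

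\textbf{Main obstacle.} The step needing most care is the passage $\tau_n\to\tau$ at the terminal corners and the moving boundary. In particular, one must check that $\widehat v(\tau_n,X_{\tau_n})\to\widehat v(\tau,X_\tau)$ almost surely. This follows from joint continuity of $\widehat v$ on the closed domain $\mathcal D$, which was obtained through the fixed-dual-domain representation \eqref{eq:vhat-fixed-dual-domain}, together with the convention that exit exactly at time $T$ is charged the terminal cost. That convention is harmless because $g_\ell(T)=h(\ell(T))$ and $g_r(T)=h(r(T))$.
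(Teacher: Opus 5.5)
Your proposal is correct and follows essentially the same verification argument as the paper: localized It\^o's formula up to exhaustion times capped at $T-1/n$, nonnegativity of the Hamiltonian expression from the HJB equation (including at $x=0$), dominated convergence using continuity and boundedness of $\widehat v$ on the compact set $\mathcal D$, and equality for the bounded, locally Lipschitz stopped feedback. Your explicit handling of initial points on $\partial_p\mathcal D$ and of the equality case at $x=0$ spells out details the paper leaves implicit, and the extra truncation of $\int|u|^2$ is harmless but unnecessary.
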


\begin{proof}
	By Theorem~\ref{thm:regional-reconstruction}, $\widehat u$ is bounded and
	locally Lipschitz in the state variable. Standard SDE localization therefore
	gives a unique strong closed-loop solution up to
	\[
		\tau^*
		:=\inf\{\theta\in[t,T]:
		X_\theta^*\notin(\ell(\theta),r(\theta))\}\wedge T,
	\]
	where, before $\tau^*$,
	\[
		dX_\theta^*
		=b_\theta\widehat u(\theta,X_\theta^*)\,d\theta
		+\sigma_\theta\widehat u(\theta,X_\theta^*)\,dW_\theta,
		\qquad X_t^*=x.
	\]
	Define the stopped feedback control by
	\begin{equation}\label{eq:stopped-optimal-control}
		u_\theta^*
		:=\widehat u(\theta,X_\theta^*)
		\mathbf 1_{\{\theta<\tau^*\}}.
	\end{equation}
	After $\tau^*$ the state is constant. Since $\widehat u$ is bounded,
	\[
		\mathbb E\int_t^T|u_\theta^*|^2\,d\theta<\infty,
	\]
	so $u^*$ is admissible.

	Fix an arbitrary $u\in\mathcal U(t,x)$, let $X$ be its state process, and
	write $\tau=\tau^{t,x;u}$. Choose compact sets
	$K_n\uparrow\mathcal D^\circ$ and set
	\[
		\tau_n
		:=\inf\{\theta\in[t,T]:(\theta,X_\theta)\notin K_n\}
		\wedge\tau\wedge(T-1/n).
	\]
	For all sufficiently large $n$, It\^o's formula gives
	\begin{equation*}
		\widehat v(t,x)
		=\mathbb E\widehat v(\tau_n,X_{\tau_n})-\mathbb E\int_t^{\tau_n}
		\left[
		\widehat v_t
		+b_\theta u_\theta\widehat v_x
		+\frac12\sigma_\theta^2u_\theta^2\widehat v_{xx}
		\right](\theta,X_\theta)\,d\theta.
	\end{equation*}
	The stochastic integral has zero expectation because the derivatives of
	$\widehat v$ are bounded on $K_n$ and $u$ is square integrable. Since
	$\widehat v$ satisfies the HJB equation in
	\eqref{eq:HJB-boundary-problem}, the integrand is nonnegative for every
	admissible $u$. Hence
	\[
		\widehat v(t,x)
		\le\mathbb E\widehat v(\tau_n,X_{\tau_n}).
	\]
	As $n\to\infty$, continuity of the state paths gives
	$\tau_n\uparrow\tau$ and $X_{\tau_n}\to X_\tau$. The function
	$\widehat v$ is continuous and bounded on $\mathcal D$, so dominated
	convergence and the boundary--terminal data in
	\eqref{eq:HJB-boundary-problem} yield
	\[
		\widehat v(t,x)\le J(t,x;u).
	\]

	For the stopped feedback \eqref{eq:stopped-optimal-control}, the selector
	$\widehat u$ attains the Hamiltonian minimum before $\tau^*$. Repeating the
	localized It\^o argument makes the integral vanish and gives
	\[
		\widehat v(t,x)=J(t,x;u^*).
	\]
	Taking the infimum over admissible controls proves
	\[
		\widehat v(t,x)=J(t,x;u^*)=V(t,x),
	\]
	and $u^*$ is optimal.
\end{proof}

\begin{Remark}[Uniqueness in the verification class]
\label{rem:classical-uniqueness}
The same argument gives uniqueness among functions
$f\in C^{1,2}(\mathcal D^\circ)\cap C(\mathcal D)$ that solve
\eqref{eq:HJB-boundary-problem} and admit a bounded Borel minimizing
feedback, locally Lipschitz in the state variable uniformly on compact
subsets of $\mathcal D^\circ$. Indeed, the stopped feedback generates
an admissible control. Localized It\^o's formula gives $f\le J(\cdot;u)$
for every admissible $u$, with equality for the minimizing feedback.
Continuity on the compact set $\mathcal D$ justifies passage to the
exit time, and hence $f=V$.
\end{Remark}

\begin{Corollary}[Spatial regularity at zero]
\label{cor:sharp-spatial-regularity}
Under the hypotheses of Theorem~\ref{thm:global-optimality}, if
$m_+\ne m_-$, the optimal feedback is not differentiable at $x=0$
for any $t<T$. If $m_+,m_->2$, set
\[
\beta:=\min\{1,m_+-2,m_--2\}.
\]
Then $V_{xx}(t,\cdot)$ is locally $\beta$-H\"older continuous across
zero, with constants uniform for $t$ in compact subsets of $[0,T)$.
If $\beta<1$, this exponent is sharp: for each fixed $t<T$,
$V_{xx}(t,\cdot)$ is not $\eta$-H\"older continuous at zero for
any $\eta\in(\beta,1]$.
\end{Corollary}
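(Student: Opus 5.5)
The plan is to read everything off the one-sided expansions in Theorem~\ref{thm:regional-reconstruction}, now valid for $V=\widehat v$ and $u^*=\widehat u$ by Theorem~\ref{thm:global-optimality}.

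\emph{Non-differentiability of the feedback when $m_+\ne m_-$.} Fix $t<T$. By \eqref{eq:global-feedback-interface-regularity}, $u^*(t,0)=0$ and the one-sided difference quotients $u^*(t,x)/x$ converge to $-b_t/(\sigma_t^2(m_\pm-1))$ as $x\to0^\pm$. Since $b_t\ne0$ by Assumption~\ref{ass:coefficients} and $m_+\ne m_-$, the two one-sided derivatives differ. Hence $u^*(t,\cdot)$ is not differentiable at zero.

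\emph{H\"older continuity when $m_\pm>2$.} By \eqref{eq:global-interface-traces}, $V_{xx}(t,0)=0$. The case $k=2$ of \eqref{eq:primal-endpoint-asymptotics} gives
\[
V_{xx}(t,x)=c_\pm(m_\pm-1)e^{-q_\pm s(t)}|x|^{m_\pm-2}+o(|x|^{m_\pm-2}),
\]
locally uniformly in $t$. So $|V_{xx}(t,x)-V_{xx}(t,0)|\le C|x|^{m_\pm-2}\le C|x|^\beta$ for $|x|\le1$. This controls increments with one endpoint at zero, which also suffices for a pair $x<0<y$ via the triangle inequality and $|x|,|y|\le|x-y|$.

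For two points on the same side, say $0<x<y$, I need a derivative bound of the form
\[
|V_{xxx}(t,z)|\le C z^{m_+-3}\quad\text{near }0^+,
\]
since this gives $|V_{xx}(t,y)-V_{xx}(t,x)|\le Cz^{m_+-3}(y-x)$ for some $z\in(x,y)$. To obtain it, differentiate $V_{xx}=1/\mathcal W_{pp}(s,p(s,x))$ to get
\[
V_{xxx}=-\frac{\mathcal W_{ppp}}{\mathcal W_{pp}^3}.
\]
Then insert the $k=2,3$ asymptotics of \eqref{eq:W-general-power} together with \eqref{eq:inverse-endpoint-p}.
\begin{itemize}
\item If $q_+\ne2$, that is, $m_+\ne2$, this yields $V_{xxx}\sim c\,x^{m_+-3}$.
\item In general, the $o$-terms give at least an $O(x^{m_+-3})$ bound.
\end{itemize}
Now split into cases.
\begin{itemize}
\item If $m_+-2\ge1$, then $V_{xxx}$ is bounded near zero and the increment is Lipschitz, matching $\beta\le1$.
\item If $m_+-2<1$, the standard splitting applies. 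When $y-x\ge x$, bound each term by $Cy^{\beta}\le C(2(y-x))^\beta$. When $y-x<x$, use the mean value estimate $Cx^{m_+-3}(y-x)\le C(y-x)^{m_+-2}$.
\end{itemize}
Uniformity for $t$ in compact subsets of $[0,T)$ follows from the uniformity in $s$ of \eqref{eq:W-general-power}. Away from zero, $V_{xx}$ is locally $C^1$ in $x$ by interior regularity of the inverse Legendre map.

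\emph{Sharpness.} Suppose $\beta=m_\pm-2<1$, and take the side attaining the minimum. Then
\[
|V_{xx}(t,x)-V_{xx}(t,0)|\big/|x|^\eta\sim c\,|x|^{\beta-\eta}\to\infty
\]
for $\eta>\beta$, because the leading coefficient $c_\pm(m_\pm-1)e^{-q_\pm s(t)}$ is positive. So $V_{xx}(t,\cdot)$ is not $\eta$-H\"older at zero.

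The main obstacle is the third-derivative step. Theorem~\ref{thm:regional-reconstruction} only records asymptotics up to $k=2$ in $x$. I will derive $V_{xxx}$ from the dual $k=3$ asymptotics in \eqref{eq:W-general-power}, keeping track of the $o(p^{q-3})$ remainders after the substitution $p\asymp x^{m-1}$. This substitution needs the uniform inversion \eqref{eq:inverse-endpoint-p}.
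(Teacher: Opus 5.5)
Your proposal is correct and follows essentially the same route as the paper. You compare the one-sided feedback slopes $-b_t/[\sigma_t^2(m_\pm-1)]$, derive $V_{xxx}=-\mathcal W_{ppp}/\mathcal W_{pp}^3\sim c\,x^{m_\pm-3}$ from the dual $k=2,3$ asymptotics and \eqref{eq:inverse-endpoint-p}, integrate on each side and use $V_{xx}(t,0)=0$ across zero, and read sharpness off $V_{xx}\sim C(t)|x|^\beta$ on the side attaining the minimum. The only cosmetic difference is that you take the increments through zero directly from the $k=2$ primal asymptotics, whereas the paper obtains them by integrating the $V_{xxx}$ bound.
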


\begin{proof}
The one-sided derivatives of $u^*$ at zero are
$-b_t/[\sigma_t^2(m_\pm-1)]$ by
\eqref{eq:global-feedback-interface-regularity}. They differ when
$m_+\ne m_-$ because $b_t\ne0$.

For the second assertion, differentiating the inverse Legendre relation
on the right gives
\[
V_{xxx}(t,x)
=-\frac{\mathcal W_{ppp}(s(t),p)}
       {\mathcal W_{pp}(s(t),p)^3},
\qquad \mathcal W_p(s(t),p)=x.
\]
The $k=2,3$ estimates in \eqref{eq:W-general-power}, together with
\eqref{eq:inverse-endpoint-p}, imply
\[
V_{xxx}(t,x)
\sim c_+(m_+-1)(m_+-2)e^{-q_+s(t)}x^{m_+-3}
\qquad(x\downarrow0).
\]
Reflection gives the analogous absolute bound on the left. Thus
$|V_{xxx}(t,x)|\le C|x|^{\beta-1}$ near zero, uniformly on the
stated time intervals. Integrating on either side, and using
$V_{xx}(t,0)=0$ to compare points on opposite sides, proves the
H\"older estimate. If $\beta<1$, one side has $m_\pm-2=\beta$.
On that side \eqref{eq:primal-endpoint-asymptotics} gives
$V_{xx}(t,x)\sim C(t)|x|^\beta$ with $C(t)>0$, which excludes
every larger H\"older exponent.
\end{proof}

\section{Extension to the general affine model}
\label{sec:affine}

We now return to the general controlled diffusion introduced in
\eqref{eq:general-state}, with terminal penalty $h(Y_T-d)$.  By
Assumption~\ref{ass:coefficients}, $\sigma$ is bounded away from zero.  Set
\[
\bar u_s:=u_s+\frac{\gamma_s}{\sigma_s},
\qquad
\widetilde c_s:=c_s-\frac{b_s\gamma_s}{\sigma_s}.
\]
Then
\[
dY_s
=\bigl(a_sY_s+b_s\bar u_s+\widetilde c_s\bigr)\,ds
+\sigma_s\bar u_s\,dW_s.
\]
Define
\[
\phi(t):=\exp\left(\int_t^T a_\theta\,d\theta\right),
\qquad
\psi(t):=d-\int_t^T\phi(\theta)\widetilde c_\theta\,d\theta,
\]
and
\begin{equation}
	\label{eq:affine-transformed-state}
	Z_s:=\phi(s)Y_s-\psi(s).
\end{equation}
Since $\phi'=-a\phi$ and $\psi'=\phi\widetilde c$, It\^o's formula gives
\begin{equation}
	\label{eq:affine-homogeneous-state}
	dZ_s
	=\bar b_s\bar u_s\,ds+\bar\sigma_s\bar u_s\,dW_s,
	\qquad
	\bar b_s:=\phi(s)b_s,\quad
	\bar\sigma_s:=\phi(s)\sigma_s.
\end{equation}
Moreover, $Z_T=Y_T-d$, so the terminal costs agree.

The transformed boundaries are
\[
\bar\ell(t):=\phi(t)\ell(t)-\psi(t),
\qquad
\bar r(t):=\phi(t)r(t)-\psi(t).
\]
Because $\phi>0$, the transformation preserves the order of the state and
the boundaries.  Corresponding state paths therefore have the same exit time
and corresponding boundary events, with the same boundary costs.  Denote by
$\bar J$ and $\bar V$ the cost functional and value function of the
homogeneous problem \eqref{eq:affine-homogeneous-state} on
$(\bar\ell,\bar r)$.

\begin{Theorem}[Value function and feedback in the affine model]
	\label{thm:affine-value}
	Define
	\begin{equation}\label{eq:affine-degenerate-curve}
		y^*(t):=\frac{\psi(t)}{\phi(t)}.
	\end{equation}
	Suppose that
	\[
	\ell(t)<y^*(t)<r(t),
	\qquad 0\le t\le T,
	\]
	and that the transformed homogeneous data satisfy the standing model
	conditions and Assumptions~\ref{ass:coefficients},
	\ref{ass:dual-comparison}, and~\ref{ass:finite-order}.  Then
	\begin{equation}\label{eq:affine-value-representation}
		V^{\rm gen}(t,y)
		=
		\bar V\bigl(t,\phi(t)y-\psi(t)\bigr).
	\end{equation}
	In particular, $V^{\rm gen}\in C^{1,2}$ in the interior of the original
	moving domain, and
	\begin{equation}\label{eq:affine-value-derivatives}
			V_y^{\rm gen}(t,y)=
			\phi(t)\bar V_z\bigl(t,\phi(t)y-\psi(t)\bigr),\quad
			V_{yy}^{\rm gen}(t,y)=
			\phi(t)^2\bar V_{zz}\bigl(t,\phi(t)y-\psi(t)\bigr).
	\end{equation}
	The optimal feedback is
	\begin{equation}\label{eq:affine-optimal-control}
		u^*(t,y)
		=
		\bar u^*\bigl(t,\phi(t)y-\psi(t)\bigr)
		-\frac{\gamma_t}{\sigma_t}.
	\end{equation}
	On the curve $y=y^*(t)$,
	\begin{equation}\label{eq:affine-interface-identities}
		V_y^{\rm gen}(t,y^*(t))=0,
		\qquad
		V_{yy}^{\rm gen}(t,y^*(t))
		=
		\phi(t)^2\bar V_{zz}(t,0),
		\qquad
		u^*(t,y^*(t))
		=
		-\frac{\gamma_t}{\sigma_t}.
	\end{equation}
	Moreover, as $y\to y^*(t)^\pm$,
	\begin{equation}\label{eq:affine-feedback-interface-asymptotics}
		u^*(t,y)+\frac{\gamma_t}{\sigma_t}
		=
		-\frac{b_t}{\sigma_t^2(m_\pm-1)}
		\bigl(y-y^*(t)\bigr)
		+o\bigl(|y-y^*(t)|\bigr),
	\end{equation}
	locally uniformly for $t\in[0,T)$.
\end{Theorem}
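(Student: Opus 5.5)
The plan is to reduce everything to the homogeneous problem through the pathwise correspondence already set up before the statement, and then read off each assertion from Theorems~\ref{thm:regional-reconstruction} and~\ref{thm:global-optimality}.

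First I would make the control correspondence exact. The map $u\mapsto\bar u=u+\gamma/\sigma$ is a bijection between $\mathcal U^{\rm gen}(t,y)$ and the homogeneous admissible class $\mathcal U(t,\phi(t)y-\psi(t))$ for the coefficients $(\bar b,\bar\sigma)$. Progressive measurability is preserved. Square integrability is preserved because $\gamma/\sigma$ is bounded and deterministic. Strong existence and uniqueness transfer through the invertible affine map \eqref{eq:affine-transformed-state}, since $\phi>0$ and $\phi,\psi$ are $C^1$ and deterministic. Under this map the state paths correspond one to one, with $Y_s\in(\ell(s),r(s))$ if and only if $Z_s\in(\bar\ell(s),\bar r(s))$, so the exit times coincide. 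The terminal costs agree because $Z_T=Y_T-d$. For the lateral costs, I attach the same $g_\ell,g_r$ to the transformed boundaries. Hence $J^{\rm gen}(t,y;u)=\bar J(t,\phi(t)y-\psi(t);\bar u)$, and taking infima gives \eqref{eq:affine-value-representation}.

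Next I need to check that the transformed data fit the framework. The coefficients $\bar b=\phi b$ and $\bar\sigma=\phi\sigma$ remain $C^\alpha$ and bounded away from zero, and $\bar\ell,\bar r\in C^{1+\alpha}$. The hypothesis $\ell<y^*<r$ is equivalent to $\bar\ell(t)<0<\bar r(t)$, which is exactly the configuration treated in Sections~\ref{sec:candidate-interface}--\ref{sec:global}. The corner compatibility becomes $g_\ell(T)=h(\bar\ell(T))$, using $\phi(T)=1$ and $\psi(T)=d$. With this in place, Theorems~\ref{thm:regional-reconstruction} and~\ref{thm:global-optimality} apply to $\bar V$, with zero-volatility line $z=0$ and optimal feedback $\bar u^*$.

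The regularity and derivative formulas then follow by the chain rule. The map $(t,y)\mapsto(t,\phi(t)y-\psi(t))$ is a $C^1$-in-time, affine-in-space diffeomorphism carrying the interior onto the interior, so $V^{\rm gen}\in C^{1,2}$ and \eqref{eq:affine-value-derivatives} holds. For the feedback, if $\bar u^*$ is optimal for the $Z$-problem, then $u=\bar u^*-\gamma/\sigma$ achieves the same cost, which gives \eqref{eq:affine-optimal-control} and its optimality. One can also confirm this is the Hamiltonian minimizer, since $-\gamma/\sigma-bV_y/(\sigma^2V_{yy})$ simplifies to $\bar u^*$ because the factors of $\phi$ cancel. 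On the curve, $z=0$ exactly when $y=y^*(t)$. Then \eqref{eq:global-interface-traces} gives $\bar V_z(t,0)=0$ and $\bar u^*(t,0)=0$, which yields \eqref{eq:affine-interface-identities}.

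The main obstacle is the asymptotics \eqref{eq:affine-feedback-interface-asymptotics}. Applying \eqref{eq:global-feedback-interface-regularity} to the transformed problem gives
\[
\bar u^*(t,z)=-\frac{\bar b_t}{\bar\sigma_t^2(m_\pm-1)}z+o(|z|),
\]
and $\bar b/\bar\sigma^2=b/(\phi\sigma^2)$. Substituting $z=\phi(t)(y-y^*(t))$, the factor $\phi$ cancels and produces the stated coefficient. The sides are preserved because $\phi>0$, and uniformity carries over because $\phi$ is bounded above and below on $[0,T]$. The only remaining point is that the exponents $m_\pm$ are unchanged, which holds because the terminal loss is the same function of $Z_T=Y_T-d$.
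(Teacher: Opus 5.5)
Your proposal is correct and follows the paper's proof essentially step for step: the control shift $\bar u=u+\gamma/\sigma$ and the affine state map give $J^{\rm gen}(t,y;u)=\bar J(t,\phi(t)y-\psi(t);\bar u)$, the chain rule and the traces at $z=0$ (together with $\bar u^*(t,0)=0$ from the definition of the feedback) give the derivative and interface identities, and substituting $z=\phi(t)(y-y^*(t))$, $\bar b=\phi b$, $\bar\sigma=\phi\sigma$ into \eqref{eq:global-feedback-interface-regularity} gives the asymptotics. One small caution: $a$, $c$, $\gamma$ are only assumed continuous, so $\bar\ell,\bar r$ are in general only $C^1$, and their $C^{1+\alpha}$ regularity should be taken from the theorem's hypothesis that the transformed data satisfy the standing conditions rather than asserted as automatic.
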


\begin{proof}
	Since $\gamma/\sigma$ is bounded, the shift \(	\bar u=u+\gamma/\sigma\)
	is a bijection between the two admissible control classes.  The state
	transformation preserves exit times and boundary events, and
	$Z_T=Y_T-d$.  Hence, under corresponding controls,
	\[
	J^{\rm gen}(t,y;u)
	=
	\bar J\bigl(t,\phi(t)y-\psi(t);\bar u\bigr).
	\]
	Taking infima proves \eqref{eq:affine-value-representation}.  Since
	$\bar V$ is the classical value function of the transformed problem, the
	chain rule gives \eqref{eq:affine-value-derivatives}; shifting the transformed
	optimal feedback back gives \eqref{eq:affine-optimal-control}.
	
	By definition, \(	\phi(t)y^*(t)-\psi(t)=0\).
The traces at $z=0$ and the identity $\bar u^*(t,0)=0$ from
	Theorem~\ref{thm:global-optimality} therefore give
	\eqref{eq:affine-interface-identities}.  Finally, apply
	\eqref{eq:global-feedback-interface-regularity} with
	\[
	z=\phi(t)\bigl(y-y^*(t)\bigr),
	\qquad
	\bar b_t=\phi(t)b_t,
	\qquad
	\bar\sigma_t=\phi(t)\sigma_t,
	\]
	and then use \eqref{eq:affine-optimal-control}.  This yields
	\eqref{eq:affine-feedback-interface-asymptotics}.
\end{proof}

Theorem~\ref{thm:affine-value} shows that the vertical line $z=0$ in the
homogeneous coordinates corresponds to a generally nonzero and time-dependent
curve in the original model. Thus the fixed set $z=0$ in the transformed
problem is a consequence of the chosen coordinates rather than a restriction
on the original dynamics.

\section{Optimal trajectories and approach probabilities near the zero-volatility set}
\label{sec:probability}

A pointwise vanishing diffusion coefficient does not by itself imply that a
trajectory starting there remains there or that the point cannot be reached
from elsewhere.  For example, the uncontrolled equation
\[
dY_\theta=(Y_\theta-1)\,d\theta+Y_\theta\,dW_\theta
\]
has zero diffusion coefficient at $Y=0$, but its drift there is $-1$; a
solution starting from zero therefore leaves zero immediately.  The behavior
of the zero-volatility set identified above must instead be derived from the
optimal closed-loop equation.

Let $X^*$ be the closed-loop state generated by the optimal feedback
$u^*=\widehat u$ identified in Theorem~\ref{thm:global-optimality} and defined
in \eqref{eq:global-feedback}, and set
\[
\tau^X:=
\inf\{\theta\in[t,T]:
X_\theta^*\notin(\ell(\theta),r(\theta))\}\wedge T,
\qquad
\tau_0:=\inf\{\theta\in[t,T]:X_\theta^*=0\},
\]
with $\inf\varnothing=\infty$.  Suppose first that $x>0$ and define
\[
\Pi_\theta:=V_x(\theta,X_\theta^*),
\qquad \theta<\tau_0\wedge\tau^X.
\]
After localization on compact subsets of the right one-sided interior, the
inverse-dual representation and interior parabolic regularity justify
It\^o's formula for $V_x$.  Differentiating the reduced HJB equation and
substituting the optimal feedback show that the drift terms cancel, giving
\begin{equation}\label{eq:marginal-SDE}
	d\Pi_\theta
	=-\frac{b_\theta}{\sigma_\theta}\Pi_\theta\,dW_\theta,
	\qquad
	X_\theta^*=\mathcal W_p(s(\theta),\Pi_\theta).
\end{equation}
Consequently,
\begin{equation}\label{eq:marginal-exponential}
	\Pi_\theta
	=\Pi_t\exp\!\left\{
	-\int_t^\theta\frac{b_\xi}{\sigma_\xi}\,dW_\xi
	-\frac12\int_t^\theta
	\left(\frac{b_\xi}{\sigma_\xi}\right)^2d\xi
	\right\}.
\end{equation}
The reflected identity holds on the left.  Thus the value gradient is not
only an adjoint process: it closes as a geometric martingale, while the
inverse relation in \eqref{eq:marginal-SDE} recovers the state variable from
the marginal value.

\begin{Theorem}[Behavior at the zero-volatility set]
	\label{thm:interface-inaccessibility}
	Under the hypotheses of Theorem~\ref{thm:global-optimality}, fix $(t,x)$ and
	assume that
	\[
	\ell(\theta)<0<r(\theta),
	\qquad \theta\in[t,T].
	\]
	If $x\ne0$, then
	\begin{equation}\label{eq:interface-inaccessible}
		\mathbb P_{t,x}(\tau_0<\tau^X)=0,
	\end{equation}
	and the optimal state preserves its sign before leaving the state interval.
	If $x=0$,
	then
	\begin{equation}\label{eq:interface-invariant}
		X_\theta^*=0,
		\qquad \theta\in[t,T],
		\qquad\text{a.s.}
	\end{equation}
	
	The same conclusions hold for the affine model.  More precisely, suppose
	that the hypotheses of Theorem~\ref{thm:affine-value} hold and that
	\[
	\ell(\theta)<y^*(\theta)<r(\theta),
	\qquad \theta\in[t,T].
	\]
	For the affine optimal state $Y^*$, define
	\[
	\tau^Y:=
	\inf\{\theta\in[t,T]:
	Y_\theta^*\notin(\ell(\theta),r(\theta))\}\wedge T,
	\qquad
	\tau_\Gamma:=
	\inf\{\theta\in[t,T]:
	Y_\theta^*=y^*(\theta)\}.
	\]
	If $y\ne y^*(t)$, then
	\[
	\mathbb P_{t,y}(\tau_\Gamma<\tau^Y)=0,
	\]
	and $Y_\theta^*-y^*(\theta)$ preserves its sign before $\tau^Y$.
	If $y=y^*(t)$, then
	\[
	Y_\theta^*=y^*(\theta),
	\qquad \theta\in[t,T],
	\qquad\text{a.s.}
	\]
\end{Theorem}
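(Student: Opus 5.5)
The plan is to treat the homogeneous case first and then transfer it to the affine model via Theorem~\ref{thm:affine-value}.

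\emph{Invariance for $x=0$.} By Theorem~\ref{thm:regional-reconstruction}, $\widehat u$ is bounded and locally Lipschitz in the state on $\mathcal D^\circ$, and $\widehat u(\theta,0)=0$. The closed-loop coefficients $b_\theta\widehat u(\theta,\cdot)$ and $\sigma_\theta\widehat u(\theta,\cdot)$ therefore vanish at $0$. Hence $X\equiv0$ solves the closed-loop SDE. Because $\ell<0<r$ on $[t,T]$, this path never exits, so $\tau^X=T$.

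Pathwise uniqueness from the local Lipschitz property then gives $X^*\equiv0$ a.s. The Lipschitz bound must be uniform in $\theta$ on a neighborhood of zero. To get it, I will use \eqref{eq:global-feedback-interface-regularity} together with the bounded one-sided derivatives, and localize on a compact neighbourhood of $[t,T']\times\{0\}$ for $T'<T$, letting $T'\uparrow T$. The only subtlety is the endpoint $\theta=T$. I will handle it with a Gronwall estimate on $\mathbb E|X_{\theta\wedge\rho}|^2$ up to the exit time $\rho$ of a small neighbourhood, using $|\widehat u(\theta,x)|\le L|x|$ for $|x|\le\delta$, $\theta<T$. The constant $L$ is bounded uniformly up to $T$ because $p\mathcal W_{pp}/\mathcal W_p$ is bounded uniformly in $s\in[0,S]$ by \eqref{eq:W-general-power}.

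\emph{Inaccessibility for $x\neq0$.} Take $x>0$; the case $x<0$ follows by reflection. Up to $\tau_0\wedge\tau^X$, use the marginal process $\Pi_\theta=V_x(\theta,X^*_\theta)$. By \eqref{eq:marginal-SDE}–\eqref{eq:marginal-exponential}, $\Pi$ is a stochastic exponential with bounded integrand $b/\sigma$. Its initial value $\Pi_t=\widehat v^+_x(s(t),x)=p(s(t),x)>0$, so $\Pi$ stays strictly positive on every finite interval and $\inf_{[t,T]}\Pi>0$ a.s.

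The state is recovered as $X^*_\theta=\mathcal W_p(s(\theta),\Pi_\theta)$. Since $\mathcal W_{pp}>0$ on $(0,\beta(s))$ by Corollary~\ref{cor:strict-convexity} and $\mathcal W_p(s,0)=0$, we have $\mathcal W_p(s,p)>0$ for $p>0$. Moreover \eqref{eq:W-general-power} gives $\mathcal W_p(s,p)\ge cp^{q_+-1}$ uniformly in $s$ for small $p$. Combining these, on $\{\tau_0<\tau^X\}$ we would have $X^*_\theta\to0$ as $\theta\uparrow\tau_0$, which forces $\Pi_\theta\to0$ by the uniform inversion \eqref{eq:inverse-endpoint-p}. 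That contradicts the positivity of the exponential at the finite time $\tau_0\le T$. So $\mathbb P_{t,x}(\tau_0<\tau^X)=0$, and the sign is preserved.

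The main obstacle is justifying \eqref{eq:marginal-SDE} rigorously up to $\tau_0\wedge\tau^X$, since $V_x$ need only be $C^{1,2}$ locally in the one-sided interior. I will localize on compacts $K_n\uparrow\mathcal D_+^\circ$ and apply It\^o's formula there. On $K_n$, $\widehat v^+$ is $C^{1,3}$ because $p(s,x)$ is smooth where $\mathcal W_{pp}>0$ and $\mathcal W$ solves a uniformly parabolic equation away from $p=0$. Writing the identity for $\Pi$ on $[t,\tau_n]$ and letting $n\to\infty$ then gives the representation up to $\tau_0\wedge\tau^X$ by path continuity. A second delicate point is the boundary $x=r(\theta)$ with $p=\beta$, but that event counts as exit and does not affect the argument.

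\emph{Affine model.} Under $Z=\phi Y-\psi$ from Theorem~\ref{thm:affine-value}, the optimal affine state $Y^*$ corresponds to the homogeneous optimal state $Z^*$ for the data $(\bar b,\bar\sigma,\bar\ell,\bar r)$. The condition $\ell<y^*<r$ is equivalent to $\bar\ell<0<\bar r$ because $\phi>0$. Also $Y^*_\theta=y^*(\theta)$ if and only if $Z^*_\theta=0$, exit times coincide, and $\operatorname{sign}(Y^*-y^*)=\operatorname{sign}Z^*$. The homogeneous result therefore transfers directly.
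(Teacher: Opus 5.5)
Your proposal is correct and follows the paper's proof: the zero path plus pathwise uniqueness handles $x=0$, the strictly positive geometric martingale $\Pi=V_x(\theta,X^*_\theta)$ rules out hitting zero for $x\ne0$, and the map $Z=\phi\,(Y-y^*)$ transfers both conclusions to the affine model. Two of your steps are more explicit versions of the paper's. Your Gronwall bound $|\widehat u(\theta,x)|\le L|x|$ near zero, uniform up to $\theta=T$, expands the paper's appeal to local Lipschitz continuity. Your use of the uniform inversion \eqref{eq:inverse-endpoint-p} replaces the paper's joint continuity of $V_x$ together with $V_x(\tau_0,0)=0$.
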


\begin{proof}
	Suppose first that $x>0$.  The stochastic exponential in
	\eqref{eq:marginal-exponential} has a strictly positive limit at every
	bounded stopping time.  On $\{\tau_0<\tau^X\}$, however, continuity of
	$X^*$ and $V_x$, together with $V_x(\tau_0,0)=0$, would imply
	$\Pi_\theta\to0$ as $\theta\uparrow\tau_0$, a contradiction.  The reflected
	argument treats $x<0$.  If $x=0$, the zero path is a closed-loop solution
	because $u^*(\theta,0)=0$; the local Lipschitz property of the feedback and
	pathwise uniqueness show that it is the unique solution.
	
	For the affine problem,
	\[
	Z_\theta^*
	=\phi(\theta)Y_\theta^*-\psi(\theta)
	=\phi(\theta)\bigl(Y_\theta^*-y^*(\theta)\bigr).
	\]
	Since $\phi>0$, hitting the affine curve is equivalent to $Z^*$ hitting
	zero, and the two differences have the same sign.  The homogeneous
	conclusions therefore transfer directly.  On the curve,
	\[
	\sigma_\theta u^*(\theta,y^*(\theta))+\gamma_\theta=0,
	\qquad
	a_\theta y^*(\theta)
	+b_\theta u^*(\theta,y^*(\theta))+c_\theta
	=(y^*)'(\theta),
	\]
	so the optimal volatility vanishes and the closed-loop drift equals the
	time derivative of the curve.  Thus a trajectory starting on the curve
	follows it.
\end{proof}

The proof of Theorem~\ref{thm:approach-rates} requires a Brownian terminal
lower tail under a bounded lower-semicontinuous barrier. Related boundary
crossing formulas and asymptotics appear in
\cite{Shepp1979,WangPotzelberger1997,BeibelLerche1994,KlumpSavov2025}; the
following lemma records the precise form needed here.

\begin{Lemma}[Terminal lower tail below a lower-semicontinuous barrier]
	\label{lem:brownian-lower-tail}
	Let
	\[
	Y_a=y_0+B_a-\frac a2,
	\qquad 0\le a\le A,
	\]
	where $A>0$ and $B$ is a standard Brownian motion.  If
	$g:[0,A]\to\mathbb R$ is bounded and lower semicontinuous and
	$y_0<g(0)$, then
	\begin{equation}\label{eq:brownian-lower-tail}
		\lim_{L\to\infty}\frac{1}{L^2}
		\log\mathbb P
		\left\{
		Y_A\le-L,\quad
		Y_a<g(a)\text{ for every }a\in[0,A]
		\right\}
		=-\frac{1}{2A}.
	\end{equation}
\end{Lemma}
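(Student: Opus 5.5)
We prove the upper and lower bounds separately; both have the Gaussian rate $-1/(2A)$ coming from $\mathbb P\{B_A\le -L\}$. Write $E_L$ for the event in \eqref{eq:brownian-lower-tail}.

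\emph{Upper bound.} Dropping the barrier constraint gives
\[
\mathbb P(E_L)\le\mathbb P\{B_A\le -L-y_0+A/2\}.
\]
The standard Gaussian tail then gives
\[
\limsup_{L\to\infty}L^{-2}\log\mathbb P(E_L)\le-\frac{1}{2A}.
\]
This step is routine.

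\emph{Lower bound.} The barrier is a constraint on the whole path, so we cannot simply drop it; we reduce to a constant barrier.

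First, we find a constant $\eta>0$ with $y_0+2\eta<\inf_{[0,\delta]}g$ for some small $\delta>0$. This uses lower semicontinuity at $0$ and $y_0<g(0)$. Boundedness of $g$ gives $m:=\inf_{[0,A]}g>-\infty$, though $m$ may lie below $y_0$. For this reason the path cannot be kept under a single level from time $0$. Instead we force it to descend quickly: on $[0,\delta]$ it stays below $y_0+\eta$, and by time $\delta$ it reaches a level below $m-1$.

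Concretely, fix $\delta\in(0,A)$ and let $G$ be the event that
\[
Y_a<\min\{y_0+\eta,\ y_0+(m-1-y_0)\,a/\delta+\eta\}\quad\text{for }a\le\delta,
\qquad Y_\delta\le m-1.
\]
By the support theorem for Brownian motion (a tube around a continuous path), $\mathbb P(G)=:c_\delta>0$, and $c_\delta$ does not depend on $L$.

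On $[\delta,A]$ we use the Markov property at time $\delta$. Starting from $Y_\delta\le m-1$, we require $Y_a<m$ on $[\delta,A]$ and $Y_A\le-L$. By the reflection principle for the drifted Brownian motion, or by Girsanov, this conditional probability is bounded below by
\[
\mathbb P\{B_{A-\delta}\le -L-C\}-\mathbb P\bigl\{B_{A-\delta}\le -L-C,\ \sup(\cdots)\ge 1\bigr\}.
\]
A cleaner route is to note that the lower tail of the endpoint conditioned on staying below a level far above the endpoint is comparable to the unconstrained tail. Explicitly, the density of the killed process at $z\le -L$ is
\[
\varphi_{A-\delta}(z-y)-\varphi_{A-\delta}(z+y-2m'),
\]
with a drift factor, and for $z\to-\infty$ the subtracted term is exponentially negligible relative to the first. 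Hence the conditional probability is at least $c\exp\{-(L+C)^2/(2(A-\delta))\}$.

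Combining the two pieces,
\[
\liminf_{L\to\infty}L^{-2}\log\mathbb P(E_L)\ge-\frac{1}{2(A-\delta)}.
\]
Letting $\delta\downarrow0$ yields the matching lower bound $-1/(2A)$.

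The main obstacle is this killed-density comparison uniformly in the starting point $Y_\delta\le m-1$. To avoid a uniformity issue, we also intersect $G$ with $\{Y_\delta\ge m-2\}$, so that the starting point ranges over a compact interval. Then the image-method kernel with Girsanov drift factor $e^{-(z-y)/2-(A-\delta)/8}$ gives the needed bound by direct computation.
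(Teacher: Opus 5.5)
Your argument follows the paper's proof step for step. For the upper bound you drop the barrier. For the lower bound you use the support theorem on a short window $[0,\delta]$ to bring the path into a compact interval strictly below a constant level beneath $\inf g$. You then apply the Markov property at $\delta$ and the image-method killed density for Brownian motion with drift $-1/2$, whose reflected term is $O(e^{-cL})$ relative to the main term, and finally let $\delta\downarrow0$.

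There is one concrete slip in your choice of levels, and as written it breaks the lower bound in one regime. You require $Y_a<y_0+\eta$ on $[0,\delta]$. After your compactness refinement you also require $Y_\delta\in[m-2,m-1]$ with $m=\inf_{[0,A]}g$. If $m-2\ge y_0+\eta$, these requirements are incompatible and the event is empty. For instance, take $y_0=0$ and $g\equiv10$: then $\eta<5$ and $m-2=8$. So the argument fails exactly when the barrier lies far above the starting point.

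The repair is immediate. Replace $m$ by $m':=\min\{m,y_0\}$ throughout. Alternatively, as the paper does, fix any $H<\inf g$ and a compact interval $I\Subset(-\infty,H)$. Then reach $I$ by a continuous path kept strictly below $g$ on $[0,\delta]$, using that $\{(a,y):y<g(a)\}$ is open by lower semicontinuity.

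With that change everything you wrote goes through. The constant barrier $m'\le g$ is valid on $[\delta,A]$. The starting points range over a compact set at distance at least $1$ from $m'$, so the image-term ratio is at most $\exp\{-2(m'+L)/(A-\delta)\}$. Your kernel estimate then gives $-L^2/(2(A-\delta))+O(L)$ uniformly, as claimed.

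The display after ``bounded below by'' is unnecessary. You can drop it in favour of the killed-density computation you actually use.
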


\begin{proof}
	After dropping the path constraint, the Gaussian lower-tail asymptotic gives
	the upper bound.
	
	For the reverse bound, choose
	\[
	H<\inf_{0\le a\le A}g(a)
	\]
	and a compact interval $I\Subset(-\infty,H)$.  For every sufficiently small
	$a_0>0$, lower semicontinuity at zero and $y_0<g(0)$ allow one to construct
	a continuous path joining $y_0$ to the interior of $I$ whose graph lies
	strictly below $g$ on $[0,a_0]$.  Its graph is compact in the open set
	$\{(a,y):y<g(a)\}$ and therefore admits a uniform tube contained in that
	set.  Full support of Wiener measure gives a constant $c_0(a_0)>0$,
	independent of $L$, such that
	\[
	\mathbb P\left\{
	Y_a<g(a),\ 0\le a\le a_0,\ 
	Y_{a_0}\in I
	\right\}
	\ge c_0(a_0).
	\]
	
	Put $D:=A-a_0$ and $\mu:=-1/2$.  Integrating the constant-level
	maximum--endpoint law in \cite{Shepp1979}, or equivalently applying the
	reflection principle, gives the transition density of Brownian motion with
	drift $\mu$ killed upon reaching $H$:
	\[
	p_D^H(z,y)
	=
	\frac{e^{\mu(y-z)-\mu^2D/2}}{\sqrt{2\pi D}}
	\left[
	e^{-(y-z)^2/(2D)}
	-
	e^{-(y+z-2H)^2/(2D)}
	\right],
	\qquad z,y<H.
	\]
	For $z\in I$ and $y\in[-L-1,-L]$, the ratio of the reflected Gaussian term
	to the first one is
	\[
	\exp\left\{-\frac{2(H-y)(H-z)}{D}\right\}
	=O(e^{-cL})
	\]
	uniformly in $z$.  Consequently,
	\[
	\inf_{z\in I}
	\log\int_{-L-1}^{-L}p_D^H(z,y)\,dy
	=
	-\frac{L^2}{2D}+O(L).
	\]
	The Markov property at $a_0$, together with $H<g$, therefore yields
	\[
	\liminf_{L\to\infty}\frac{1}{L^2}
	\log\mathbb P
	\left\{
	Y_A\le-L,\ 
	Y_a<g(a)\text{ for every }a\in[0,A]
	\right\}
	\ge-\frac{1}{2(A-a_0)}.
	\]
	Letting $a_0\downarrow0$ proves the lower bound.
\end{proof}

\begin{Theorem}[Approach rates in marginal and state coordinates]
	\label{thm:approach-rates}
	Under the hypotheses of Theorem~\ref{thm:global-optimality}, fix
	$t<t_1<T$ and $0<x<r(t)$.  For all sufficiently small
	$0<\varepsilon<r(t_1)$, set
	\begin{equation}\label{eq:variance-clock-t1}
		A_{t,t_1}
		:=
		\int_t^{t_1}
		\left(\frac{b_\theta}{\sigma_\theta}\right)^2d\theta>0,
		\qquad
		p_\varepsilon:=V_x(t_1,\varepsilon).
	\end{equation}
	Then
	\begin{equation}\label{eq:universal-interface-approach}
		\lim_{\varepsilon\downarrow0}
		\frac{
			\log\mathbb P_{t,x}
			\{0<X_{t_1}^*\le\varepsilon,\ t_1<\tau^X\}
		}{
			[\log(1/p_\varepsilon)]^2
		}
		=
		-\frac{1}{2A_{t,t_1}}.
	\end{equation}
	If the right terminal minimum has order $m_+$, then
	\begin{equation}\label{eq:interface-approach-rate}
		\lim_{\varepsilon\downarrow0}
		\frac{
			\log\mathbb P_{t,x}
			\{0<X_{t_1}^*\le\varepsilon,\ t_1<\tau^X\}
		}{
			[\log(1/\varepsilon)]^2
		}
		=
		-\frac{(m_+-1)^2}{2A_{t,t_1}}.
	\end{equation}
	If instead $\ell(t)<x<0$, the reflected statements hold with
	$p_\varepsilon^-:=-V_x(t_1,-\varepsilon)$ and the event
	$\{-\varepsilon\le X_{t_1}^*<0,\ t_1<\tau^X\}$ in place of their
	right-hand counterparts, and with $m_-$ in place of $m_+$.
\end{Theorem}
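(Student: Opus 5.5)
The plan is to move to the marginal coordinate $\Pi_\theta=V_x(\theta,X^*_\theta)$. By \eqref{eq:marginal-SDE}--\eqref{eq:marginal-exponential}, $\Pi$ is a geometric martingale with deterministic volatility $-b/\sigma$. On the right branch the map $x\mapsto V_x(t_1,x)$ is a strictly increasing bijection from $(0,r(t_1))$ onto $(0,\beta(s(t_1)))$, by Corollary~\ref{cor:strict-convexity} and the inverse-Legendre identities. Since $x>0$, Theorem~\ref{thm:interface-inaccessibility} gives $X^*>0$ before $\tau^X$. Hence
\[
\{0<X^*_{t_1}\le\varepsilon,\ t_1<\tau^X\}=\{\Pi_{t_1}\le p_\varepsilon,\ t_1<\tau^X\}.
\]
Exit before $t_1$ can only occur through the right boundary, and there $\Pi_\theta=V_x(\theta,r(\theta))=\beta(s(\theta))$. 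So the event equals
\[
\{\Pi_{t_1}\le p_\varepsilon,\ \Pi_\theta<\beta(s(\theta))\ \text{for all }\theta\in[t,t_1]\}.
\]

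Next I would take logarithms and change to a Brownian motion run on the variance clock. Set $a(\theta)=\int_t^\theta(b/\sigma)^2$, so $a$ ranges over $[0,A_{t,t_1}]$. Dambis--Dubins--Schwarz shows that $\log\Pi_\theta=\log\Pi_t+\widetilde B_{a(\theta)}-a(\theta)/2$ for a standard Brownian motion $\widetilde B$; the sign of $b/\sigma$ is absorbed by symmetry. The barrier becomes $g(a)=\log\beta(s(\theta(a)))$. By Proposition~\ref{prop:stopping-boundary}, $\beta$ is lower semicontinuous, and by \eqref{eq:beta-uniform-bounds} it is bounded with bounds $c_*,c^*$. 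The time changes are continuous and monotone, so $g$ is bounded and lower semicontinuous on $[0,A]$. Moreover $y_0=\log V_x(t,x)<\log\beta(s(t))=g(0)$, because $x<r(t)$ and $V_x$ is strictly increasing. Applying Lemma~\ref{lem:brownian-lower-tail} with $L=\log(1/p_\varepsilon)\to\infty$ gives \eqref{eq:universal-interface-approach}. Strictly, the lemma uses the level set $\{Y_A\le -L\}$, while here the barrier constraint is nonstrict at the level $p_\varepsilon$; this matches the lemma's event exactly.

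For \eqref{eq:interface-approach-rate}, I would use \eqref{eq:inverse-endpoint-p} (equivalently \eqref{eq:primal-endpoint-asymptotics} with $k=1$) at the fixed time $t_1<T$:
\[
p_\varepsilon=c_+e^{-q_+s(t_1)}\varepsilon^{m_+-1}(1+o(1)).
\]
Then $\log(1/p_\varepsilon)=(m_+-1)\log(1/\varepsilon)+O(1)$, so the ratio of the squared logarithms tends to $(m_+-1)^2$, which gives the second limit. The left case follows by reflection, with $p^-_\varepsilon=-V_x(t_1,-\varepsilon)$ and the reflected marginal $-\Pi$.

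The main obstacle I expect is the exact identification of the event in the second step. Before $\tau^X$, hitting the lateral boundary in state must be the same as $\Pi$ hitting $\beta(s(\theta))$. This needs continuity of $V_x$ up to the moving boundary, where the derivative is taken from the continuation side via smooth fit, and it needs the identity $V_x(\theta,r(\theta))=\beta(s(\theta))$. When $\beta$ is only lower semicontinuous, the relation "$X^*_\theta<r(\theta)$ iff $\Pi_\theta<\beta$" must hold pathwise for all $\theta$; I would first get it from the continuity of the Legendre correspondence $x=\mathcal W_p(s,p)$ on the closed region. A second point is that \eqref{eq:marginal-SDE} holds only up to $\tau_0\wedge\tau^X$; since $\tau_0=\infty$ a.s. before exit, this is harmless, and the event only involves paths that survive to $t_1$.
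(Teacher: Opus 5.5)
Your proof is correct. It follows the paper in moving to the marginal coordinate, using the log/variance-clock change, Lemma~\ref{lem:brownian-lower-tail}, and the endpoint asymptotics for $p_\varepsilon$. It differs in how the event is identified.

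\paragraph{How the paper does it.} The paper does not prove an exact identity of events. It uses the sandwich
\[
\mathbb P(E_\varepsilon)\le\mathbb P_{t,x}\{0<X^*_{t_1}\le\varepsilon,\ t_1<\tau^X\}\le\mathbb P\{\widehat\Pi_{t_1}\le p_\varepsilon\},
\]
where $\widehat\Pi$ is the globally defined stochastic exponential and $E_\varepsilon$ is your barrier event. Its lower inclusion runs in the opposite direction to yours. On $E_\varepsilon$ it builds $\widehat X_\theta=\mathcal W_p(s(\theta),\widehat\Pi_\theta)$ and checks, via It\^o's formula and $\mathcal W_{sp}=2p\mathcal W_{pp}+p^2\mathcal W_{ppp}$, that $\widehat X$ solves the closed-loop SDE. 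Pathwise uniqueness then concludes. This needs only openness of $\mathcal C_R$ and no information about $V_x$ at the moving boundary. The upper bound simply discards the barrier, so the lemma is used only for the lower bound.

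\paragraph{What your route buys and costs.} You work forward from $X^*$ and obtain an a.s.\ equality of events, so the lemma yields both bounds at once. Your inclusion of the target event in $E_\varepsilon$ is immediate, since $V_x(\theta,\cdot)$ maps $(0,r(\theta))$ into $(0,\beta(s(\theta)))$. It also retains the barrier that the paper drops. The cost is the converse inclusion, which requires the exit-time behaviour of $\Pi$.

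\paragraph{The step that needs correcting.} Your claim $\Pi_{\tau^X}=V_x(\tau^X,r(\tau^X))=\beta(s(\tau^X))$ is not justified as stated.
\begin{itemize}
\item $V_x$ is only defined in the interior.
\item $\beta$ is merely lower semicontinuous. So, with $s_0:=s(\tau^X)$, the limit of $\Pi_\theta$ could a priori lie strictly above $\beta(s_0)$, on the segment where $\mathcal W_p(s_0,\cdot)=R_+(s_0)$.
\end{itemize}
What you need, and what does hold, is $\Pi_{\tau^X}\ge\beta(s_0)$. To get it:
\begin{enumerate}
\item Let $\theta\uparrow\tau^X$ in $\mathcal W_p(s(\theta),\Pi_\theta)=X^*_\theta$. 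Use the joint continuity of $\mathcal W_p$ across the free boundary, which comes from the $W^{2,1}_{\infty,\mathrm{loc}}$ smooth-fit regularity in Appendix~\ref{app:one-obstacle-boundary}; note $s_0\ge s(t_1)>0$, so the initial corner is not involved. This gives $\mathcal W_p(s_0,\Pi_{\tau^X})=R_+(s_0)$.
\item Strict convexity and smooth fit give $\mathcal W_p(s_0,p)<R_+(s_0)$ for $p<\beta(s_0)$. Hence $\Pi_{\tau^X}\ge\beta(s_0)$.
\end{enumerate}
Finally, define $\Pi$ after $\tau^X$ by \eqref{eq:marginal-exponential}, as the paper does with $\widehat\Pi$. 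Your barrier event constrains every $\theta\in[t,t_1]$, including times after an exit.
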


\begin{proof}
	We prove the right-hand assertions. Put $p:=V_x(t,x)$ and let
	$\widehat\Pi$ be the stochastic exponential on $[t,t_1]$ given by the
	right-hand side of \eqref{eq:marginal-exponential}, with initial value
	$\widehat\Pi_t=p$. Let
	\[
	E_\varepsilon
	:=
	\left\{
	\widehat\Pi_{t_1}\le p_\varepsilon,\quad
	\widehat\Pi_\theta<\beta(s(\theta))
	\text{ for every }\theta\in[t,t_1]
	\right\}.
	\]
	We first justify the corresponding safe-path implication. Let
	\[
		\mathcal C_R:=\{(s,\pi):0<\pi<\beta(s)\},\qquad
		\eta:=\inf\{\theta\in[t,t_1]:
		(s(\theta),\widehat\Pi_\theta)\notin\mathcal C_R\},
	\]
	with $\inf\varnothing=\infty$. For $\theta<\eta$, set
	\[
		\widehat X_\theta
		:=\mathcal W_p(s(\theta),\widehat\Pi_\theta),
		\qquad
		\widehat u_\theta
		:=-\frac{b_\theta}{\sigma_\theta^2}
		\widehat\Pi_\theta
		\mathcal W_{pp}(s(\theta),\widehat\Pi_\theta).
	\]
	Because $\beta$ is lower semicontinuous, $\mathcal C_R$ is open. On
	$E_\varepsilon$, the graph of $(s(\theta),\widehat\Pi_\theta)$,
	$t\le\theta\le t_1$, is a compact subset of $\mathcal C_R$, and hence
	$\eta>t_1$. Using
	$\mathcal W_{sp}=2p\mathcal W_{pp}+p^2\mathcal W_{ppp}$, It\^o's formula
	and the inverse Legendre identities show that $\widehat X$ satisfies the
	optimal closed-loop SDE on
	$[t,t_1]$. Moreover, $\widehat X_t=x$. Pathwise uniqueness therefore gives
	\[
		t_1<\tau^X,\qquad
		X_{t_1}^*
		=\mathcal W_p(s(t_1),\widehat\Pi_{t_1})
		\quad\text{on }E_\varepsilon.
	\]
	Consequently, monotonicity of $\mathcal W_p$ and
	\[
	\mathcal W_p(s(t_1),p_\varepsilon)=\varepsilon,
	\]
	show that the event $E_\varepsilon$ is contained in
	\[
	\{0<X_{t_1}^*\le\varepsilon,\ t_1<\tau^X\}.
	\]
	Conversely, on $\{t_1<\tau^X\}$ the actual marginal process and
	$\widehat\Pi$ solve the same linear stochastic equation with the same initial
	value and hence coincide.  Strict convexity therefore gives
	\begin{equation}\label{eq:approach-sandwich}
		\mathbb P(E_\varepsilon)
		\le
		\mathbb P_{t,x}
		\{0<X_{t_1}^*\le\varepsilon,\ t_1<\tau^X\}
		\le
		\mathbb P\{\widehat\Pi_{t_1}\le p_\varepsilon\}.
	\end{equation}
	
	Introduce the deterministic time change
	\[
	a(\theta)
	:=
	\int_t^\theta
	\left(\frac{b_\xi}{\sigma_\xi}\right)^2d\xi,
	\qquad
	A:=a(t_1)=A_{t,t_1},
	\]
	and denote its inverse by $\theta=\theta(a)$.
	Assumption~\ref{ass:coefficients} makes this time change strictly increasing.
	For a standard
	Brownian motion $B$,
	\begin{equation}\label{eq:log-marginal-clock}
		Y_a
		:=
		\log\widehat\Pi_{\theta(a)}
		=
		\log p+B_a-\frac a2,
		\qquad 0\le a\le A.
	\end{equation}
	Set
	\[
	g(a):=\log \beta(s(\theta(a))).
	\]
	The uniform positive bounds and lower semicontinuity of $\beta$ imply that
	$g$ is bounded and lower semicontinuous.  Moreover,
	$\log p<g(0)$ because $p<\beta(s(t))$.  Since
	$p_\varepsilon\downarrow0$, with
	\[
	L_\varepsilon:=\log(1/p_\varepsilon)
	\]
	one has exactly
	\[
	E_\varepsilon
	=
	\left\{
	Y_A\le-L_\varepsilon,\quad
	Y_a<g(a)\text{ for every }a\in[0,A]
	\right\}.
	\]
	Lemma~\ref{lem:brownian-lower-tail} gives the logarithmic limit
	$-1/(2A_{t,t_1})$ for the left-hand side of
	\eqref{eq:approach-sandwich}; the ordinary Gaussian lower-tail asymptotic
	gives the same limit for the right-hand side.  This proves
	\eqref{eq:universal-interface-approach}.
	
	Finally, let $q_+:=m_+/(m_+-1)$.  The $k=1$ case of
	\eqref{eq:W-general-power}, evaluated at
	$\mathcal W_p(s(t_1),p_\varepsilon)=\varepsilon$, gives
	\[
	p_\varepsilon
	=
	c_+e^{-q_+s(t_1)}
	\varepsilon^{m_+-1}(1+o(1)).
	\]
	Thus
	\[
	\log(1/p_\varepsilon)
	=
	(m_+-1)\log(1/\varepsilon)+O(1),
	\]
	and \eqref{eq:interface-approach-rate} follows from
	\eqref{eq:universal-interface-approach}.  The left-hand assertions follow
	from the reflected construction.
\end{proof}

\begin{Example}[An explicit asymmetric model]
	\label{ex:asymmetric-approach}
	Let \(b\equiv\sigma\equiv1\), let the state interval be \((-1,1)\), and
	take
	\[
		s(t)=\frac{T-t}{2},
		\qquad
		h(x)=
		\begin{cases}
			x^4/4,&x\ge0,\\
			|x|^6/6,&x<0.
		\end{cases}
	\]
	Then \(h\in C^2(\mathbb R)\) is strictly convex with
	\[
		m_+=4,\quad m_-=6,\qquad
		q_+=\frac43,\quad q_-=\frac65.
	\]
	For \(h_m(x)=x^m/m\), direct conjugacy gives
	\[
		H_{\infty,m}(p)=\frac{p^q}{q},\qquad
		\overline p_m(s)=e^{-qs},
		\quad q=\frac{m}{m-1}.
	\]
	Thus the lateral costs
	\[
		g_r(t)=\frac14e^{-2(T-t)/3},
		\qquad
		g_\ell(t)=\frac16e^{-3(T-t)/5}
	\]
	satisfy
	\[
	-Q_\pm'(s)
	=(\mathscr H_\pm)_s(s,\overline p_\pm(s)),
	\]
	so Assumption~\ref{ass:dual-comparison} holds with equality, while
	Assumption~\ref{ass:finite-order} holds exactly.

	Direct substitution and verification give
	\[
	V(t,x)=
	\begin{cases}
		\dfrac16e^{-6s(t)/5}|x|^6,&-1\le x<0,\\[5pt]
		0,&x=0,\\[3pt]
		\dfrac14e^{-4s(t)/3}x^4,&0<x\le1,
	\end{cases}
	\qquad
	u^*(t,x)=
	\begin{cases}
		-x/5,&x<0,\\
		0,&x=0,\\
		-x/3,&x>0.
	\end{cases}
	\]
	Hence \(V\) is strictly convex although \(V_{xx}(t,0)=0\), and the
	feedback has different one-sided linear slopes. Since
	\(A_{t,t_1}=t_1-t\), Theorem~\ref{thm:approach-rates} yields
	\[
	\lim_{\varepsilon\downarrow0}
	\frac{\log\mathbb P_{t,x}
		\{0<X_{t_1}^*\le\varepsilon,\ t_1<\tau^X\}}
	{[\log(1/\varepsilon)]^2}
	=-\frac9{2(t_1-t)},\qquad x>0,
	\]
	and
	\[
	\lim_{\varepsilon\downarrow0}
	\frac{\log\mathbb P_{t,x}
		\{-\varepsilon\le X_{t_1}^*<0,\ t_1<\tau^X\}}
	{[\log(1/\varepsilon)]^2}
	=-\frac{25}{2(t_1-t)},\qquad x<0.
	\]
	Thus the same inaccessible zero-volatility set has different approach
	rates on its two sides.
\end{Example}

\begin{Example}[A strict perturbation of the lateral costs]
\label{ex:boundary-perturbation}
Keep the coefficients, interval, and terminal loss of
Example~\ref{ex:asymmetric-approach}, but replace the lateral costs by
\[
Q_{\delta,\pm}(s):=\frac1{m_\pm}e^{-q_\pm s}-\delta_\pm s,
\qquad
0<\delta_\pm<\frac{e^{-q_\pm S}}{m_\pm S},
\qquad S=T/2.
\]
Here $g_{\delta,r}(t)=Q_{\delta,+}(s(t))$ and
$g_{\delta,\ell}(t)=Q_{\delta,-}(s(t))$.
These costs are positive and have the same terminal values as before.
Since $R_\pm=1$ and
\[
-Q_{\delta,\pm}'(s)
=\frac{e^{-q_\pm s}}{m_\pm-1}+\delta_\pm
=(\mathscr H_\pm)_s(s,\overline p_\pm(s))+\delta_\pm,
\]
Assumption~\ref{ass:dual-comparison} holds with a strict margin.

Let $V_\delta$ denote the new value function and write
$F_\pm(t,x):=e^{-q_\pm s(t)}|x|^{m_\pm}/m_\pm$
on the corresponding side. Then
\begin{equation}\label{eq:perturbed-value-strict}
0<V_\delta(t,x)<F_\pm(t,x),
\qquad t<T,\quad 0<\pm x<1.
\end{equation}
To see the strict upper bound, use the unperturbed feedback
$u^0=-X/(m_\pm-1)$ up to its exit time $\tau^0$, capped by $T$.
The process $F_\pm(\theta,X_\theta)$, stopped at $\tau^0$, is a
bounded martingale. Consequently,
\[
J_\delta(t,x;u^0)
=F_\pm(t,x)-\delta_\pm
\mathbb E_{t,x}^{u^0}\!\left[
s(\tau^0)\mathbf1_{\{\tau^0<T\}}\right]
<F_\pm(t,x).
\]
The final inequality holds because this geometric diffusion has a
positive probability of reaching the corresponding endpoint before
$T$. Taking the infimum gives the upper bound in
\eqref{eq:perturbed-value-strict}; the lower bound follows from
strict convexity and $V_\delta(t,0)=(V_\delta)_x(t,0)=0$.

Nevertheless, \eqref{eq:primal-endpoint-asymptotics} gives
\[
V_\delta(t,x)\sim F_\pm(t,x),\qquad
(V_\delta)_{xx}(t,x)\sim
(m_\pm-1)e^{-q_\pm s(t)}|x|^{m_\pm-2}
\quad(x\to0^\pm).
\]
Thus the exit boundary changes the value without changing its leading
behavior near zero. Nor can the perturbed optimal feedback remain
$-x/(m_\pm-1)$ on an entire side: the feedback formula would then
force $V_\delta(t,x)=C_\pm(t)|x|^{m_\pm}$, and the asymptotics
would identify it with $F_\pm$, contradicting
\eqref{eq:perturbed-value-strict}.

The same construction is valid with $m_+=5/2$ and $m_-=7/2$.
In that case Corollary~\ref{cor:sharp-spatial-regularity} gives a
classical value function whose second spatial derivative has the
sharp H\"older exponent $1/2$ at zero, and an optimal feedback
that is locally Lipschitz but not differentiable there.
\end{Example}

\appendix
\numberwithin{equation}{section}
\section{State intervals lying on one side of zero}
\label{app:one-sided-domain}

Assume $0<\ell(t)<r(t)$; the negative one-sided case follows by reflection.
In backward time write
\[
L=\ell\circ t,\qquad R=r\circ t,\qquad
Q_\ell=g_\ell\circ t,\qquad Q_r=g_r\circ t.
\]
For this appendix, assume that the terminal loss prescribed on
$[L(0),R(0)]$ extends to $[0,R(0)]$ as a one-sided terminal loss satisfying
the conditions stated for the homogeneous model and admitting an admissible
extension to $[0,\infty)$.  We continue to denote the extension to
$[0,R(0)]$ by $h$.

Set
\[
\begin{aligned}
\Phi_\ell(s,p)&:=pL(s)-Q_\ell(s),
&\Phi_r(s,p)&:=pR(s)-Q_r(s),\\
\Psi(s,p)&:=\max\{\Phi_\ell(s,p),\Phi_r(s,p)\}.
\end{aligned}
\]
The terminal reward for the dual problem is
\[
H_{LR}(p)
:=
\sup_{L(0)\le x\le R(0)}\{px-h(x)\},
\qquad p\ge0.
\]

Fix an admissible extension $\widehat h$ to $[0,\infty)$ and define
\[
H_\infty(p):=\sup_{x\ge0}\{px-\widehat h(x)\},
\qquad
\mathscr H(s,p):=\mathbb E[H_\infty(pZ_s)].
\]
Since $\mathscr H_{pp}>0$, there are unique continuous functions
$\underline p,\overline p$ satisfying
\[
\mathscr H_p(s,\underline p(s))=L(s),
\qquad
\mathscr H_p(s,\overline p(s))=R(s).
\]
We impose the sufficient two-obstacle comparison condition
\begin{equation}\label{eq:one-sided-comparison}
	L'\le0\le R',
	\qquad
	\Delta
	:=\underline pL-\mathscr H(s,\underline p)-Q_\ell
	=\overline pR-\mathscr H(s,\overline p)-Q_r,
	\qquad
	\Delta'\ge0.
\end{equation}
Terminal-corner compatibility gives $\Delta(0)=0$.  Requiring the two gaps to
coincide is sufficient, but is not necessary for the original control problem.

Let
\[
\mathcal W(s,p)
:=
\sup_{0\le\tau\le s}
\mathbb E\!\left[
\Psi(s-\tau,P_\tau^p)\mathbf 1_{\{\tau<s\}}
+H_{LR}(P_s^p)\mathbf 1_{\{\tau=s\}}
\right],
\]
and write
\[
\mathcal S_s
:=
\{p\ge0:\mathcal W(s,p)=\Psi(s,p)\}.
\]

\begin{Proposition}
	\label{prop:one-sided-extension}
	Under Assumption~\ref{ass:coefficients} and the preceding one-sided
	extension and comparison conditions, there are two unique stopping
	boundaries $\beta_\ell,\beta_r$ such that
	\[
	\mathcal S_s
	=
	[0,\beta_\ell(s)]\cup[\beta_r(s),\infty).
	\]
	Moreover, with
	\[
	m_0(s)
	:=
	\frac{Q_r(s)-Q_\ell(s)}{R(s)-L(s)},
	\]
	one has
	\begin{equation}\label{eq:one-sided-boundary-order}
		0<\underline p(s)
		\le\beta_\ell(s)<m_0(s)
		<\beta_r(s)\le\overline p(s),
		\qquad 0\le s\le S.
	\end{equation}
	In particular, the continuation region is uniformly separated from $p=0$.
	The inverse Legendre transform of $\mathcal W$ is the value function, belongs
	to $C^{1,2}(\mathcal D^\circ)\cap C(\mathcal D)$, is strictly convex in the
	state variable, and satisfies $V_x>0$ throughout $\mathcal D^\circ$.
	Consequently, the optimal volatility does not vanish in the interior.
\end{Proposition}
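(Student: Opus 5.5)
The plan follows the architecture of Proposition~\ref{prop:stopping-boundary}, now with two affine obstacles. The steps, in order, are: basic properties of $\mathcal W$; a comparison function that locates the two boundaries; threshold structure of the stopping set; the strict separation \eqref{eq:one-sided-boundary-order}; and finally inverse-Legendre reconstruction and verification as in Section~\ref{sec:global}.

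\textbf{Basic properties.} I would first establish continuity, convexity and monotonicity of $\mathcal W(s,\cdot)$ exactly as before. The reward $\Psi(s,\cdot)$ is a maximum of affine functions, hence convex, and $H_{LR}$ is convex. The flow $p\mapsto pZ_\tau$ is linear, so convexity passes to the supremum. Both rewards have linear growth, and $\mathcal W\ge\Psi$.

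\textbf{Comparison function.} The natural candidate is
\[
\overline{\mathcal W}(s,p):=
\begin{cases}
\Phi_\ell(s,p),& p\le\underline p(s),\\
\mathscr H(s,p)-\Delta(s),&\underline p(s)<p<\overline p(s),\\
\Phi_r(s,p),&p\ge\overline p(s).
\end{cases}
\]
The common gap $\Delta$ in \eqref{eq:one-sided-comparison} makes this function continuous, and the slope conditions give $C^1$ fit at $\underline p$ and $\overline p$. It dominates $\Psi$ by convexity of $\mathscr H$ together with tangency to both lines. On the middle region,
\[
(\partial_s-p^2\partial_{pp})\overline{\mathcal W}=-\Delta'\le0 .
\]
On the outer regions the operator equals $pL'-Q_\ell'$ or $pR'-Q_r'$. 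Differentiating the definition of $\Delta$ and using $\mathscr H_p(s,\underline p)=L$ converts this into
\[
p L'-Q_\ell'=(\mathscr H_s-\Delta')(s,\underline p)+(p-\underline p)L' .
\]
For $p\le\underline p$ and $L'\le0$ this is $\ge(\mathscr H_s-\Delta')(s,\underline p)$, and symmetrically on the right with $R'\ge0$. I expect to check that these combine into the supermartingale property of $\overline{\mathcal W}(s-\theta,P_\theta)$ in the reverse-time reward sense used for Assumption~\ref{ass:dual-comparison}. Verification via It\^o--Tanaka, using the $C^1$ fit, then gives $\mathcal W\le\overline{\mathcal W}$. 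Consequently the stopping region contains $[0,\underline p]\cup[\overline p,\infty)$. This yields $\underline p\le\beta_\ell$ and $\beta_r\le\overline p$, provided I define
\[
\beta_\ell:=\sup\bigl(\mathcal S_s\cap[0,m_0]\bigr),\qquad
\beta_r:=\inf\bigl(\mathcal S_s\cap[m_0,\infty)\bigr).
\]

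\textbf{Threshold structure.} I would show that $\mathcal S_s\cap[0,m_0]$ is an interval containing $0$. Below $m_0$ one has $\Psi=\Phi_\ell$, which is affine. The difference $\mathcal W-\Phi_\ell$ is convex, because $\mathcal W$ is convex and $\Phi_\ell$ affine. It is also nonnegative and vanishes at $\underline p$ and at $0$. Its zero set is therefore an interval, which gives the left piece; the right piece is symmetric.

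\textbf{Strict separation (main obstacle).} The inequalities $\beta_\ell<m_0<\beta_r$ amount to the kink point $m_0$ lying in the continuation region. At $p=m_0$, starting from the kink of $\Psi$, continuing for a short time gains order $\sqrt{\theta}$ from local time at the kink. The drift loss is only $O(\theta)$, since $\Phi_\ell$ and $\Phi_r$ have bounded $s$-derivatives. Hence $\mathcal W(s,m_0)>\Psi(s,m_0)$ for $s>0$. At $s=0$ the strict inequality instead follows from $H_{LR}(m_0)>\Psi(0,m_0)$, because the maximizer lies strictly inside $(L(0),R(0))$. Lower semicontinuity of the boundaries and the bounds $\underline p\ge c>0$ give the uniform separation from $p=0$.

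\textbf{Primal reconstruction.} On the continuation strip $(\beta_\ell,\beta_r)$, strict convexity follows as in Corollary~\ref{cor:strict-convexity}. This is easier here because $p$ stays away from $0$, so the operator is uniformly parabolic there. Smooth fit supplies $\mathcal W_p=L$ at $\beta_\ell$ and $\mathcal W_p=R$ at $\beta_r$. The conjugate $\sup_p\{px-\mathcal W\}$ is attained at interior points with $p>0$. The identities \eqref{eq:inverse-legendre-identities} then give a $C^{1,2}$ strictly convex solution with $V_x=p>0$, and a bounded locally Lipschitz feedback. Bounding $p\mathcal W_{pp}$ is now trivial, since $p$ is bounded above and below. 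The verification argument of Theorem~\ref{thm:global-optimality} applies verbatim. Since $V_x>0$, the optimal volatility $-b_tV_x/(\sigma_tV_{xx})$ is nonzero throughout the interior.
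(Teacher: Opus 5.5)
Your plan follows the paper's architecture. It uses a tangent-line supersolution $\overline{\mathcal W}$ and a threshold structure on each side of the kink $m_0(s)$. It uses the $\sqrt{\varepsilon}$ gain at the kink for $s>0$ and the secant-slope fact $h'(L(0))<m_0(0)<h'(R(0))$ at $s=0$. It ends with inverse Legendre reconstruction and verification. Your convexity argument for the threshold structure is a valid substitute for the paper's two-sided Lipschitz bound.

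\textbf{The comparison step is wrong as written, and the rest depends on it.} With $\Delta=\underline pL-\mathscr H(s,\underline p)-Q_\ell$ from \eqref{eq:one-sided-comparison}, value matching at $\underline p$ and $\overline p$ forces the middle branch to be $\mathscr H+\Delta$. Your $\mathscr H-\Delta$ is discontinuous there unless $\Delta\equiv0$. The sign error then propagates. Differentiating $\Delta$ and using $\mathscr H_p(s,\underline p)=L$ gives $pL'-Q_\ell'=(p-\underline p)L'+\mathscr H_s(s,\underline p)+\Delta'$, and the generator on the middle branch is $+\Delta'$. With your signs, the middle region gives $-\Delta'\le0$, so $\overline{\mathcal W}(s-\theta,P_\theta^p)$ would be a submartingale there. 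Your outer bound $\mathscr H_s-\Delta'$ has no definite sign. No alternative time convention repairs this, which is why you could only say you ``expect to check'' the supermartingale property.

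With the correct sign, all three branches are nonnegative, using $L'\le0\le R'$, $\Delta'\ge0$, and $\mathscr H_s=p^2\mathscr H_{pp}>0$ at $\underline p$ and $\overline p$. For optional sampling you also need $\overline{\mathcal W}(0,\cdot)=H_{LR}$. This follows from $\underline p(0)=h'(L(0))$, $\overline p(0)=h'(R(0))$, and $\Delta(0)=0$ by corner compatibility. Until this step is fixed, you have neither contact on $[0,\underline p]\cup[\overline p,\infty)$ nor the vanishing of $\mathcal W-\Phi_\ell$ at $p=0$ that your threshold argument uses.

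\textbf{Bounding $p\mathcal W_{pp}$ is not trivial.} A compact $p$-range in $(0,\infty)$ controls the weight $p$, not $\mathcal W_{pp}$. Interior estimates degenerate as $p\to\beta_\ell(s)$ or $p\to\beta_r(s)$. In the main case the paper also reduces to a compact $p$-range and still needs all of Appendix~\ref{app:uniform-weighted-dual-second-derivative}. That means the $W^{2,1}_\infty$ optimal regularity of \cite{AnderssonLindgrenShahgholian2013} in logarithmic coordinates at positive times. It also means the extension argument at the initial corners, which here sit at $p=h'(L(0))$ and $p=h'(R(0))$. This bound is what keeps the feedback $-\frac{b}{\sigma^2}p\mathcal W_{pp}$ bounded near the lateral boundaries. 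Both admissibility and the verification argument of Theorem~\ref{thm:global-optimality} rely on that boundedness.

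\textbf{Strict convexity is not made easier by uniform parabolicity.} The minimum-principle argument of Corollary~\ref{cor:strict-convexity} needs a region where $\mathcal W_{pp}>0$ is already known. Here there is no such seed near $p=0$. Positivity must instead come from $H_{LR}''>0$ near $(0,m_0(0))$. An interior zero must then be connected back to that region through a strip around $m_0(s)$ contained in the continuation region.
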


\begin{proof}
	Define
	\[
	\overline{\mathcal W}(s,p)
	:=
	\begin{cases}
		\Phi_\ell(s,p),&p\le\underline p(s),\\
		\mathscr H(s,p)+\Delta(s),
		&\underline p(s)<p<\overline p(s),\\
		\Phi_r(s,p),&p\ge\overline p(s).
	\end{cases}
	\]
	The definition of \(\Delta\) gives value matching, while those of
	\(\underline p\) and \(\overline p\) give slope matching.
	Hence the two outer branches are tangent to the strictly convex middle
	branch, their intersection satisfies
	\[
		\underline p<m_0<\overline p,
	\]
	and \(\overline{\mathcal W}\) is convex, \(C^1\) in \(p\), and dominates
	\(\Psi\). Terminal compatibility also gives
	\(\overline{\mathcal W}(0,\cdot)=H_{LR}\).

	Using \(\mathscr H_s=p^2\mathscr H_{pp}\), one obtains
	\[
	(\partial_s-p^2\partial_{pp})\overline{\mathcal W}
	=
	\begin{cases}
		(p-\underline p)L'
		+\mathscr H_s(s,\underline p)+\Delta',
		&p<\underline p,\\
		\Delta',&\underline p<p<\overline p,\\
		(p-\overline p)R'
		+\mathscr H_s(s,\overline p)+\Delta',
		&p>\overline p.
	\end{cases}
	\]
	All three branches are nonnegative by
	\eqref{eq:one-sided-comparison}. Generalized It\^o's formula produces no
	local-time term because the spatial slopes match, and optional sampling
	gives
	\[
		\Psi\le\mathcal W\le\overline{\mathcal W}.
	\]
	Thus immediate stopping is optimal for
	\(p\le\underline p\) and \(p\ge\overline p\).

	It remains to exclude contact where the two obstacles meet. For \(s>0\),
	the martingale property of \(P^{m_0(s)}\) and the kink of \(\Psi\) give
	\[
	\mathbb E\!\left[
		\Psi(s-\varepsilon,P_\varepsilon^{m_0(s)})
		\right]
	=
	\Psi(s,m_0(s))
	+\frac{m_0(s)(R(s)-L(s))}{\sqrt\pi}\sqrt\varepsilon
	+o(\sqrt\varepsilon).
	\]
	Hence waiting is strictly preferable at \(m_0(s)\). At \(s=0\), terminal
	compatibility and strict convexity imply
	\[
		h'(L(0))<m_0(0)<h'(R(0)),
		\qquad
		H_{LR}(m_0(0))>\Psi(0,m_0(0)).
	\]
	Thus \(m_0(s)\) lies in the continuation region for every \(s\in[0,S]\).

	The continuity argument from Proposition~\ref{prop:stopping-boundary}
	applies because \(H_{LR}\ge\Psi(0,\cdot)\). Moreover, coupling the same
	stopping time for \(p_2>p_1\) yields
	\[
	L(s)(p_2-p_1)
	\le\mathcal W(s,p_2)-\mathcal W(s,p_1)
	\le R(s)(p_2-p_1).
	\]
	Therefore \(\mathcal W-\Phi_\ell\) is nondecreasing and
	\(\mathcal W-\Phi_r\) is nonincreasing in \(p\). Together with the two
	contact on both outer intervals and strict continuation at \(m_0\), this proves
	\eqref{eq:one-sided-boundary-order} and the asserted two-boundary
	structure.

	Finally, \eqref{eq:one-sided-boundary-order} separates the continuation
	region uniformly from \(p=0\). The smooth-fit, local obstacle-regularity,
	and second-derivative estimates used for
	Proposition~\ref{prop:stopping-boundary},
	Lemma~\ref{lem:uniform-weighted-dual-second-derivative}, and
	Corollary~\ref{cor:strict-convexity} apply at the two free boundaries. With
	\[
		\mathcal C:=\{(s,p):\beta_\ell(s)<p<\beta_r(s)\},
	\]
	they give
	\[
		\sup_{\mathcal C}p\mathcal W_{pp}<\infty,
		\qquad
		\mathcal W_{pp}>0\quad\text{in }\mathcal C.
	\]
	The inverse-Legendre and verification arguments of
	Theorems~\ref{thm:regional-reconstruction} and
	\ref{thm:global-optimality} then yield the stated classical value function,
	\(V_x>0\), and the optimal feedback
	\[
		u^*(t,x)
		=-\frac{b_t}{\sigma_t^2}
		p\mathcal W_{pp}(s(t),p),
		\qquad \mathcal W_p(s(t),p)=x.
	\]
	Since \(p>0\) and \(\mathcal W_{pp}>0\) in the continuation region, the
	optimal volatility is nonzero throughout the state-domain interior.
\end{proof}

\section{Proof of Proposition~\ref{prop:stopping-boundary}}
\label{app:one-obstacle-boundary}

\begin{proof}
	Write
	\[
	Z_\theta:=e^{\sqrt{2}B_\theta-\theta},
	\qquad
	P_\theta^p=pZ_\theta.
	\]
	The process $Z$ is a nonnegative martingale, and
	$\mathbb E Z_\tau=1$ for every bounded stopping time $\tau$.
	
	\smallskip
	\noindent\emph{Basic bounds and spatial shape.}
	The restricted conjugate satisfies
	\[
	0\le H_+(q)\le R_+(0)q,
	\qquad q\ge0,
	\]
	because $h_+\ge0$, $h_+(0)=0$, and the conjugacy is restricted to
	$[0,R_+(0)]$.  Since $R_+$ is nondecreasing under
	Assumption~\ref{ass:dual-comparison}, every stopping rule $\tau\le s$
	has expected reward bounded above by $R_+(s)p$.  Waiting until the terminal
	time gives a nonnegative reward.  Hence
	\begin{equation}\label{eq:W-basic-bound}
		0\le\mathcal W(s,p)\le R_+(s)p,
		\qquad
		\mathcal W(s,0)=0.
	\end{equation}
	
	Let $0\le p_1<p_2$ and use the same stopping time for both initial values.
	The slopes of $H_+$ lie in $[0,R_+(0)]$, while the slope of
	$\Phi_+(r,\cdot)$ is $R_+(r)\le R_+(s)$ for $0\le r\le s$.  Since
	\[
	P_\theta^{p_2}-P_\theta^{p_1}
	=(p_2-p_1)Z_\theta,
	\]
	optional sampling yields
	\begin{equation}\label{eq:W-Lipschitz}
		0\le
		\mathcal W(s,p_2)-\mathcal W(s,p_1)
		\le R_+(s)(p_2-p_1).
	\end{equation}
	Thus $\mathcal W(s,\cdot)$ is nondecreasing and Lipschitz on compact
	$p$-intervals.  For a fixed stopping time, its expected reward is convex in
	$p$: the preterminal reward is affine, the terminal reward is convex, and
	$P^p$ depends linearly on $p$.  Taking the supremum over stopping times
	preserves convexity.
	
The reward in \eqref{eq:dual-obstacle-stopping} need not be continuous
at the terminal date.  It has instead the favorable upward jump
\[
\Phi_+(0,p)\le H_+(p),\qquad p\ge0.
\]
Continuity follows from the finite-horizon result
\cite[Theorem~3.1(i),(iii)]{PalczewskiStettner2010} after bounded
truncation, with the remaining horizon regarded as an external parameter.
More precisely, let
\[
\chi_N(z):=(-N)\vee(z\wedge N),
\qquad
\Phi_N:=\chi_N\circ\Phi_+,
\qquad
H_N:=\chi_N\circ H_+.
\]
The truncated rewards are bounded and continuous, and the terminal
majorization $\Phi_N(0,\cdot)\le H_N$ is preserved.  Hence their stopping
values $\mathcal W_N$ are jointly continuous, including at $s=0$.

It remains only to remove the truncation.  The rewards have a common
linear-growth bound and, for every $K<\infty$ and $a>1$,
\[
\sup_{0\le p\le K}
\mathbb E\!\left[
\sup_{0\le\theta\le S}(P_\theta^p)^a
\right]
\le
K^a\left(\frac{a}{a-1}\right)^a
e^{a(a-1)S}<\infty.
\]
Thus the family of stopped rewards is uniformly integrable, uniformly over
$p\in[0,K]$, $s\in[0,S]$, and all stopping times bounded by $s$.
Consequently,
\[
\sup_{(s,p)\in[0,S]\times[0,K]}
|\mathcal W_N(s,p)-\mathcal W(s,p)|
\longrightarrow0.
\]
It follows that
\[
\mathcal W\in C([0,S]\times[0,\infty)).
\]
	
	\smallskip
	\noindent\emph{An upper comparison function.}
	Define
	\begin{equation}\label{eq:upper-comparison-profile}
		\overline{\mathcal W}(s,p):=
		\begin{cases}
			\mathscr H_+(s,p)
			+\overline p_+(s)R_+(s)
			-\mathscr H_+(s,\overline p_+(s))-Q_+(s),
			&0\le p<\overline p_+(s),\\
			pR_+(s)-Q_+(s),&p\ge\overline p_+(s).
		\end{cases}
	\end{equation}
	By \eqref{eq:comparison-contact}, the two branches and their first spatial
	derivatives agree at $\overline p_+(s)$.  Convexity of
	$\mathscr H_+(s,\cdot)$ shows that the first branch dominates
	$pR_+-Q_+$, so $\overline{\mathcal W}\ge\Phi_+$.
	
	Using $(\mathscr H_+)_s=p^2(\mathscr H_+)_{pp}$ and differentiating the
	first branch at fixed $p$, the terms containing $\overline p_+'$ cancel by
	\eqref{eq:comparison-contact}.  Hence
	\[
	(\partial_s-p^2\partial_{pp})\overline{\mathcal W}
	=
	\begin{cases}
		\overline p_+R_+'-Q_+'-(\mathscr H_+)_s(s,\overline p_+),
		&p<\overline p_+(s),\\
		pR_+'-Q_+',&p>\overline p_+(s),
	\end{cases}
	\]
	which is nonnegative by Assumption~\ref{ass:dual-comparison}.
	No singular spatial measure occurs at the joining curve because the first
	spatial derivatives match there.
	Terminal-corner compatibility gives
	$\overline{\mathcal W}(0,\cdot)=H_+$.  Indeed,
	$\overline p_+(0)=h_+'(R_+(0))$: below this point the restricted and full
	conjugates agree, while above it both coincide with the supporting line
	$pR_+(0)-h_+(R_+(0))$ at $x=R_+(0)$.
	
	For a smooth comparison function $F$, It\^o's formula gives
	\[
	dF(s-\theta,P_\theta^p)
	=
	-\bigl(F_s-p^2F_{pp}\bigr)
	(s-\theta,P_\theta^p)\,d\theta+dM_\theta.
	\]
	Thus $\overline{\mathcal W}(s-\theta,P_\theta^p)$ is a supermartingale.
	The generalized It\^o formula gives the same conclusion for the
	piecewise-defined function because its first spatial derivative is
	continuous; hence no local-time term appears.  Optional sampling therefore
	yields
	\[
	\mathcal W(s,p)\le\overline{\mathcal W}(s,p).
	\]
	For $p\ge\overline p_+(s)$,
	\[
	\overline{\mathcal W}(s,p)
	=pR_+(s)-Q_+(s)=\Phi_+(s,p).
	\]
	Immediate stopping gives the reverse inequality, and hence
	\begin{equation}\label{eq:contact-above-comparison}
	\mathcal W(s,p)=\Phi_+(s,p),
	\qquad p\ge\overline p_+(s).
	\end{equation}
	
	\smallskip
	\noindent\emph{Threshold structure and bounds.}
	Set
	\[
	\Delta(s,p):=\mathcal W(s,p)-\Phi_+(s,p).
	\]
	Immediate stopping gives $\Delta\ge0$.  From
	\eqref{eq:W-Lipschitz}, for $p_2>p_1$,
	\[
	\Delta(s,p_2)-\Delta(s,p_1)
	\le0,
	\]
	so $p\mapsto\Delta(s,p)$ is nonincreasing.  Moreover,
	\[
	\Delta(s,0)=Q_+(s)>0,
	\]
	whereas \eqref{eq:contact-above-comparison} shows that $\Delta$ vanishes
	for all sufficiently large $p$.  Therefore there exists a unique
	$\beta(s)\in(0,\infty)$ such that
	\[
	\{\Delta(s,\cdot)=0\}=[\beta(s),\infty),
	\qquad
	\{\Delta(s,\cdot)>0\}=[0,\beta(s)).
	\]
	
	If $p<Q_+(s)/R_+(s)$, then
	\[
	\Phi_+(s,p)<0\le\mathcal W(s,p),
	\]
	so contact is impossible.  Together with
	\eqref{eq:contact-above-comparison}, this proves
	\[
	\frac{Q_+(s)}{R_+(s)}
	\le\beta(s)\le\overline p_+(s).
	\]
	The functions $Q_+/R_+$ and $\overline p_+$ are positive and continuous on
	$[0,S]$.  Continuity of $\overline p_+$ follows from
	$(\mathscr H_+)_{pp}>0$ and the implicit function theorem.  This proves the
	uniform bounds \eqref{eq:beta-uniform-bounds}.
	
	\smallskip
	\noindent\emph{Lower semicontinuity and the initial value.}
	Joint continuity of $\Delta$ makes the contact set
	\[
	\mathcal S
	=\{(s,p):p\ge\beta(s)\}
	\]
	closed.  Since this is the epigraph of $\beta$, the function $\beta$ is
	lower semicontinuous.
	
	At $s=0$, $\mathcal W(0,p)=H_+(p)$.  Strict convexity of $h_+$ gives
	\[
	H_+(p)>pR_+(0)-h_+(R_+(0))
	\quad\text{for }p<h_+'(R_+(0)),
	\]
	and equality for $p\ge h_+'(R_+(0))$.  Therefore
	\[
	\beta(0)=h_+'(R_+(0)).
	\]
	Lower semicontinuity gives
	\[
	\beta(0)\le\liminf_{s\downarrow0}\beta(s).
	\]
	On the other hand,
	\[
	\beta(s)\le\overline p_+(s),
	\qquad
	\overline p_+(s)\longrightarrow
	\overline p_+(0)=h_+'(R_+(0)).
	\]
	The corresponding limsup inequality proves
	\[
	\lim_{s\downarrow0}\beta(s)=h_+'(R_+(0)).
	\]
	
	\smallskip
	\noindent\emph{Continuation equation, value matching, and smooth fit.}
	The dynamic programming principle gives
	\begin{equation}\label{eq:dual-VI}
	\min\{\mathcal W-\Phi_+,
		\mathcal W_s-p^2\mathcal W_{pp}\}=0,
		\qquad p>0,\quad 0<s\le S,
	\end{equation}
	in the viscosity sense. Standard penalization and local parabolic Sobolev
	estimates show that it also holds almost everywhere locally away from
	$p=0$; see \cite{Friedman1975PVI}. On a compact cylinder contained in
	$\mathcal C$, the obstacle is inactive. After a logarithmic spatial
	change, the equation is uniformly parabolic, and interior regularity gives
	\[
	\mathcal W\in C_{\mathrm{loc}}^{1,2}(\mathcal C),
	\qquad
	\mathcal W_s-p^2\mathcal W_{pp}=0
	\quad\text{in }\mathcal C.
	\]
	
	Fix $s_0\in(0,S]$ and set $p_0:=\beta(s_0)>0$. In a neighborhood of
	$(s_0,p_0)$, let $U:=\mathcal W-\Phi_+$ and use the logarithmic coordinates
	$p=e^{z+s}$ introduced in
	Appendix~\ref{app:uniform-weighted-dual-second-derivative}. With
	$\widetilde U(s,z):=U(s,e^{z+s})$ and
	$\mathcal H:=\partial_{zz}-\partial_s$, the transformed variational
	inequality is
	\[
	\min\{\widetilde U,-\mathcal H\widetilde U+\widetilde f\}=0,
	\]
	where the locally H\"older source is
	\[
	\widetilde f(s,z)=e^{z+s}R_+'(s)-Q_+'(s).
	\]
	The same penalization and Sobolev estimates yield
	\[
	\widetilde U\in W_{\rho,\mathrm{loc}}^{2,1},
	\qquad
	\mathcal H\widetilde U
	=\widetilde f\mathbf 1_{\{\widetilde U\ne0\}}
	\quad\text{a.e.}
	\]
	for every finite $\rho>1$. Local Schauder theory supplies a function
	$v_0\in W_\infty^{2,1}$ satisfying $\mathcal Hv_0=\widetilde f$ in a
	smaller cylinder. Hence Theorem~1 of
	\cite{AnderssonLindgrenShahgholian2013} applies and gives
	$\widetilde U\in W_{\infty,\mathrm{loc}}^{2,1}$. In particular, its first
	spatial derivative is continuous across the contact set.
	
	Since $U(s_0,\cdot)\ge0$ and $U(s_0,p_0)=0$, spatial differentiability gives
	$U_p(s_0,p_0)=0$. Therefore
	\[
	\mathcal W_p(s_0,\beta(s_0)^-)
	=(\Phi_+)_p(s_0,\beta(s_0))
	=R_+(s_0).
	\]
	A backward cylinder gives the same conclusion when $s_0=S$. Value matching
	follows directly from $\beta(s)\in\mathcal S_s$:
	\[
	\mathcal W(s,\beta(s))
	=\Phi_+(s,\beta(s))
	=\beta(s)R_+(s)-Q_+(s).
	\]
	At $s=0$, the identities in \eqref{eq:w-free-boundary-problem} follow from
	$\mathcal W(0,\cdot)=H_+$, terminal-corner compatibility, and conjugacy:
	\[
	H_+(\beta(0))
	=\beta(0)R_+(0)-Q_+(0),
	\qquad
	H_+'(\beta(0))=R_+(0).
	\]
	Together with \eqref{eq:W-basic-bound}, this proves that
	$(\mathcal W,\beta)$ satisfies
	\eqref{eq:w-free-boundary-problem} with
	$(w,p_R)$ replaced by $(\mathcal W,\beta)$.
\end{proof}

	\section{Proof of Lemma~\ref{lem:uniform-weighted-dual-second-derivative}}
	\label{app:uniform-weighted-dual-second-derivative}

	For $z_*\in\mathbb R$, $s_*\in\mathbb R$, and $r>0$, write
	\[
	Q_r^-(z_*,s_*):=(z_*-r,z_*+r)\times(s_*-r^2,s_*],
	\]
	and let
	\[
	\mathcal H:=\partial_{zz}-\partial_s.
	\]

\begin{proof}
	Choose $p_*>0$ as in Proposition~\ref{prop:asymptotics_p0}. Then
	\begin{equation}\label{eq:appendix-near-zero-bound}
		\sup_{\substack{0\le s\le S\\0<p\le p_*}}
		p\mathcal W_{pp}(s,p)<\infty.
	\end{equation}
	Since $\beta(s)\le c^*$ by \eqref{eq:beta-uniform-bounds}, it remains to
	control the compact $p$-range
	\[
		K:=[0,S]\times[p_*/2,c^*].
	\]

	We first record the transformed problem, including its domain and data.
	Use the logarithmic change of variables and define the transformed gap
	directly by
	\begin{equation}\label{eq:appendix-unified-transform}
		p=e^{z+s},\qquad
		\widetilde U(s,z)
		:=\mathcal W(s,e^{z+s})-\Phi_+(s,e^{z+s}),\qquad
		\widetilde f(s,z):=e^{z+s}R_+'(s)-Q_+'(s).
	\end{equation}
	Thus the transformed space-time domain is
	\[
		\widetilde{\mathcal Q}:=(0,S]\times\mathbb R,
	\]
	and the set $K$ corresponds exactly to the compact slanted strip
	\begin{equation}\label{eq:appendix-transformed-compact-set}
		\widetilde K
		:=\left\{(s,z):0\le s\le S,\quad
		\log(p_*/2)-s\le z\le\log c^*-s\right\}.
	\end{equation}
	In particular, on $\widetilde K$,
	\[
		\log(p_*/2)-S\le z\le\log c^*.
	\]
	The degenerate boundary $p=0$ has been sent to $z=-\infty$ and therefore
	does not meet $\widetilde K$.

	Define the transformed free boundary and initial datum by
	\[
		\beta_{\widetilde U}(s):=\log\beta(s)-s,\qquad
		G_0(z):=H_+(e^z)-\Phi_+(0,e^z).
	\]
	Direct differentiation gives
	\[
		\widetilde U_s
		=\mathcal W_s-\widetilde f
		+p\bigl(\mathcal W_p-R_+(s)\bigr),
	\]
	\[
		\widetilde U_z
		=p\bigl(\mathcal W_p-R_+(s)\bigr),\qquad
		\widetilde U_{zz}
		=p\bigl(\mathcal W_p-R_+(s)\bigr)
		+p^2\mathcal W_{pp},
	\]
	where the derivatives of $\mathcal W$ are evaluated at $(s,p)$ with
	$p=e^{z+s}$. Therefore,
	\begin{equation}\label{eq:appendix-operator-identity}
		-\mathcal H\widetilde U+\widetilde f
		=\mathcal W_s-p^2\mathcal W_{pp}.
	\end{equation}
	By the dynamic programming argument in the proof of
	Proposition~\ref{prop:stopping-boundary}, $\mathcal W$ is a continuous
	viscosity solution of \eqref{eq:dual-VI}. The map
	$(s,z)\mapsto(s,e^{z+s})$ is a smooth diffeomorphism from
	$\widetilde{\mathcal Q}$ onto $(0,S]\times(0,\infty)$, and subtraction of
	the transformed obstacle,
	which is of class $C^{1,2}$, preserves the viscosity inequalities. Hence
	the invariance of viscosity solutions under smooth changes of variables,
	together with \eqref{eq:appendix-operator-identity}, shows that
	$\widetilde U$ is a continuous viscosity solution of the first equation
	below; see \cite{CrandallIshiiLions1992}. The initial condition and the
	continuation and stopping regions transform in the same way, giving
	\begin{equation}\label{eq:appendix-transformed-vi}
	\begin{cases}
		\min\{\widetilde U,-\mathcal H\widetilde U+\widetilde f\}=0,
		&(s,z)\in\widetilde{\mathcal Q},\\
		\widetilde U(0,z)=G_0(z),&z\in\mathbb R,\\
		\widetilde U(s,z)>0,&z<\beta_{\widetilde U}(s),\\
		\widetilde U(s,z)=0,&z\ge\beta_{\widetilde U}(s),\\
		\displaystyle\lim_{z\to-\infty}\widetilde U(s,z)=Q_+(s),
		&0\le s\le S.
	\end{cases}
	\end{equation}
	Spatial smooth fit becomes
	\begin{equation}\label{eq:appendix-transformed-smooth-fit}
		\widetilde U_z(s,\beta_{\widetilde U}(s)^-)=0.
	\end{equation}

	We next pass from the viscosity formulation to the strong Sobolev
	formulation. Fix backward cylinders
	$Q'\Subset Q\Subset\widetilde{\mathcal Q}$. Let
	$\gamma_\varepsilon\in C^\infty(\mathbb R)$ be a nonnegative smooth
	approximation of $r\mapsto\varepsilon^{-1}r^-$, and let
	$\widetilde U^\varepsilon$ solve
	\[
		-\mathcal H\widetilde U^\varepsilon+\widetilde f
		=\gamma_\varepsilon(\widetilde U^\varepsilon)
		\qquad\text{in }Q,
	\]
	with smooth approximations of the parabolic boundary data. The maximum
	principle and the standard penalty estimates give local bounds, independent
	of $\varepsilon$, for $\widetilde U^\varepsilon$ and its penalty term.
	The interior parabolic estimates therefore yield, for every finite
	$\rho>1$,
	\[
		\|\widetilde U^\varepsilon\|_{W_\rho^{2,1}(Q')}
		\le C_{Q',Q,\rho}.
	\]
	After passage to a subsequence, the penalized solutions converge weakly in
	$W_\rho^{2,1}(Q')$ and locally uniformly to a solution of the transformed
	variational inequality. Stability of viscosity solutions and the parabolic
	comparison principle for continuous viscosity sub- and supersolutions
	identify this limit with $\widetilde U$; see
	\cite{CrandallIshiiLions1992,Friedman1975PVI}. Since $Q'$ is arbitrary, for
	every finite $\rho>1$ we obtain
	\begin{equation}\label{eq:appendix-transformed-equation}
		\widetilde U\in W_{\rho,\mathrm{loc}}^{2,1}
		\quad\text{and}\quad
		\mathcal H\widetilde U
		=\widetilde f\mathbf 1_{\{\widetilde U\ne0\}}
		\quad\text{a.e. in }\widetilde{\mathcal Q}.
	\end{equation}
	Here the almost-everywhere identity is the complementary equation associated
	with \eqref{eq:appendix-transformed-vi}. The transformed estimate that
	remains to be proved is
	\begin{equation}\label{eq:appendix-transformed-goal}
		\|\widetilde U_{zz}\|_{L^\infty(\widetilde K)}<\infty.
	\end{equation}

	We now work entirely with \eqref{eq:appendix-transformed-vi}. At every point
	$(s_*,z_*)\in\widetilde K$ with $s_*>0$, choose a sufficiently small
	backward cylinder $Q_r^-(z_*,s_*)$. This also covers $s_*=S$. Define
	\[
		\widetilde v_0(s,z):=Q_+(s)-e^{z+s}R_+(s).
	\]
	The regularity of $R_+$ and $Q_+$ gives
	$\widetilde v_0\in W_{\infty,\mathrm{loc}}^{2,1}$, and direct
	differentiation yields
	\[
		\mathcal H\widetilde v_0
		=e^{z+s}R_+'(s)-Q_+'(s)=\widetilde f(s,z).
	\]
	Thus the source hypothesis in Theorem~1 of
	\cite{AnderssonLindgrenShahgholian2013} is satisfied. Applying that theorem
	to \eqref{eq:appendix-transformed-equation} therefore gives
	\[
		\widetilde U\in W_{\infty,\mathrm{loc}}^{2,1}
	\]
	near every positive-time point of $\widetilde K$.

	It remains to consider the initial face $s=0$. Set
	\[
		z_0:=\beta_{\widetilde U}(0)=\log h_+'(R_+(0)).
	\]
	Since $\beta_{\widetilde U}(s)\to z_0$ as $s\downarrow0$, the
	continuation/stopping
	description in \eqref{eq:appendix-transformed-vi} shows that points $(0,z_*)$ with
	$z_*<z_0$ have a relative neighborhood in the continuation region, whereas
	$\widetilde U=0$ in a relative neighborhood of every $(0,z_*)$ with
	$z_*>z_0$. Standard linear parabolic regularity handles both cases. Thus
	only the corner $(0,z_0)$ requires an extension argument.

	By conjugacy, the transformed initial datum satisfies
	\[
		G_0>0\quad\text{on }(-\infty,z_0),\qquad
		G_0=0\quad\text{on }[z_0,\infty),\qquad
		G_0(z_0)=G_0'(z_0)=0.
	\]
	Moreover, $h_+''(R_+(0))>0$ implies
	$G_0\in W_{\mathrm{loc}}^{2,\infty}$ near $z_0$; the equality
	$G_0'(z_0^-)=G_0'(z_0^+)=0$ excludes a Dirac mass in $G_0''$.

	Choose $r,\delta>0$ so that the positive-time part of
	\[
		\mathcal Q_0:=(-\delta,\delta)\times(z_0-r,z_0+r)
	\]
	lies in a neighborhood where \eqref{eq:appendix-transformed-equation}
	holds, and define
	\[
	\overline U(s,z):=
	\begin{cases}
		G_0(z),&s<0,\\
		\widetilde U(s,z),&s\ge0,
	\end{cases}
	\qquad
	\overline f(s,z):=
	\begin{cases}
		G_0''(z),&s<0,\\
		\widetilde f(s,z),&s>0.
	\end{cases}
	\]
	The value of $\overline f$ at $s=0$ is immaterial. The time traces agree
	because $\widetilde U(0,\cdot)=G_0$, and the preceding derivative matching
	excludes a spatial atom at $z_0$. Since $G_0''=0$ almost everywhere on
	$\{G_0=0\}$, we obtain
	\begin{equation}\label{eq:appendix-extended-equation}
		\mathcal H\overline U
		=\overline f\mathbf 1_{\{\overline U\ne0\}}
		\qquad\text{in }\mathcal D'(\mathcal Q_0).
	\end{equation}
	Here $\mathcal D'(\mathcal Q_0)$ denotes the space of distributions on
	$\mathcal Q_0$; thus \eqref{eq:appendix-extended-equation} is understood in
	the distributional sense.
	The right-hand side is locally bounded, so standard parabolic estimates
	yield
	\[
		\overline U\in W_{\rho,\mathrm{loc}}^{2,1}(\mathcal Q_0)
		\qquad\text{for every finite }\rho>1;
	\]
	see \cite{LadyzhenskayaSolonnikovUralceva1968,Lieberman1996}.

	To verify the source condition in Theorem~1 of
	\cite{AnderssonLindgrenShahgholian2013}, choose a spatial cutoff $\chi$
	equal to one near $z_0$, extend $G_0$ to a function
	$\widehat G_0\in W^{2,\infty}(\mathbb R)$ that agrees with it there, and set
	\[
		\widehat f(s,z):=\chi(z)\widetilde f(s,z),
		\qquad s\ge0.
	\]
	Let $(P_s)_{s\ge0}$ be the heat semigroup and define the transformed
	comparison function
	\[
	\widetilde v(s,z):=
	\begin{cases}
		\widehat G_0(z),&s<0,\\[1mm]
		P_s\widehat G_0(z)-
		\displaystyle\int_0^s
		P_{s-\theta}\widehat f(\theta,\cdot)(z)\,d\theta,
		&s\ge0.
	\end{cases}
	\]
	Its time traces agree at $s=0$, and the sign in the Duhamel term gives
	$\mathcal H\widetilde v=\overline f$ in a smaller cylinder around
	$(0,z_0)$. The local H\"older regularity of $\widetilde f$ and standard
	heat-semigroup estimates give
	$\widetilde v\in W_\infty^{2,1}$ there. Hence the source condition is
	satisfied, and the cited theorem applied to
	\eqref{eq:appendix-extended-equation} yields
	\[
		\overline U\in W_{\infty,\mathrm{loc}}^{2,1}
	\]
	near $(0,z_0)$.

	We have now obtained local $W_\infty^{2,1}$ estimates for $\widetilde U$ in
	a relative neighborhood of every point of the compact set $\widetilde K$.
	A finite-cover argument proves \eqref{eq:appendix-transformed-goal}.

	Finally, return to $p=e^{z+s}$. Since
	\[
		\widetilde U_{zz}
		=p(\mathcal W_p-R_+(s))+p^2\mathcal W_{pp},
	\]
	we have on $K$
	\begin{equation}\label{eq:appendix-transform-back}
		p\mathcal W_{pp}
		=\frac{\widetilde U_{zz}}{p}-\bigl(\mathcal W_p-R_+(s)\bigr).
	\end{equation}
	Here $p\ge p_*/2$, $0\le\mathcal W_p\le R_+(s)$, and $R_+$ is bounded.
	Thus \eqref{eq:appendix-transformed-goal} bounds
	$p\mathcal W_{pp}$ on $K$. Combining this with
	\eqref{eq:appendix-near-zero-bound} proves
	\eqref{eq:uniform-weighted-dual-second-derivative}.
	\end{proof}

\section*{Acknowledgments}
This work was supported in part by the National Natural Science Foundation
of China (NSFC) under Grant Nos.~12271274 and 12571514.

\bibliographystyle{abbrvnat}
\bibliography{references}

\end{document}